\documentclass[11pt,reqno]{amsart}
\usepackage[T1]{fontenc}
\usepackage{lscape}
\usepackage[utf8]{inputenc}
\usepackage{appendix}
\usepackage{paralist}
\usepackage{longtable}
\usepackage{graphicx}
\usepackage{amsmath}
\usepackage{fullwidth}
\usepackage{epstopdf}
\usepackage{lscape}
\usepackage{rotating}
\usepackage{marvosym}
\usepackage{amsthm}
\usepackage{rotating}
\usepackage{mwe}
\usepackage{caption}
\usepackage{subcaption}
\usepackage{amstext}
\usepackage{array}
\usepackage{adjustbox}
\usepackage{subcaption}
\usepackage{times}

\usepackage{xspace}
\usepackage{color}
\usepackage{amssymb}
\usepackage{pifont}
\newcommand{\cmark}{\ding{51}}%
\newcommand{\xmark}{\ding{55}}%
\newcommand{\MATLAB}{\textsc{Matlab}\xspace}
\usepackage{tabularx,ragged2e,booktabs,caption}
\newcolumntype{C}[1]{>{\Centering}m{#1}}

\usepackage{kpfonts}
\usepackage[margin=1in]{geometry}
\vfuzz2pt 
\hfuzz2pt 
\numberwithin{equation}{section}
\usepackage{algorithm}
\usepackage{algorithmic}
\makeatletter
\def\listofalgorithms{\@starttoc{loa}\listalgorithmname}
\def\l@algorithm{\@tocline{0}{3pt plus2pt}{0pt}{1.9em}{}}
\renewcommand{\ALG@name}{Algorithm}
\renewcommand{\listalgorithmname}{List of \ALG@name s}
\numberwithin{algorithm}{section}
\theoremstyle{definition}
\newtheorem{defn}[algorithm]{Definition}
\theoremstyle{remark}
\newtheorem{rem}[algorithm]{Remark}
\theoremstyle{theorem}
\newtheorem{thm}[algorithm]{Theorem}
\theoremstyle{proposition}

\theoremstyle{example}

\newtheorem{exmp}{Example}[section]
\theoremstyle{corollary}

\theoremstyle{assumption}

\theoremstyle{lemma}

\makeatother

\newcommand{\R}{\mathbb{R}}
\newcommand{\fb}{f_{\mathrm{FB}}}
\newcommand{\psifb}{\psi_{\mathrm{FB}}}

\usepackage{booktabs}
\usepackage{multirow}
\usepackage{siunitx}

\newcommand{\liuhao}{\fontsize{9pt}{\baselineskip}\selectfont}
\usepackage{scalerel}

\usepackage{hyperref}
\hypersetup{
    colorlinks=true,
    linkcolor=blue,
    filecolor=magenta,
    urlcolor=cyan,
}
\urlstyle{same}
\usepackage[table]{xcolor}

\begin{document}
\title[Comparison of the KKT and value function reformulations in bilevel optimization]{Theoretical and numerical comparison of the Karush-Kuhn-Tucker \\and value function reformulations in bilevel optimization}
%
\author[]{Alain B. Zemkoho and Shenglong Zhou\\ \\
\emph{S\MakeLowercase{chool of} M\MakeLowercase{athematical} S\MakeLowercase{ciences}, U\MakeLowercase{niversity of} S\MakeLowercase{outhampton}, SO17 1BJ S\MakeLowercase{outhampton}, UK}\\[2ex]
\MakeLowercase{$\{$\textsf{a.b.zemkoho, shenglong.zhou}$\}$\textsf{@soton.ac.uk}}}
\thanks{This project was funded by the  EPSRC Grant EP/P022553/1}
%

\date{\today}

\begin{abstract}
The Karush-Kuhn-Tucker and  value function (\emph{lower-level value function}, to be precise) reformulations are the most common single-level transformations of the bilevel optimization problem. So far, these reformulations have either been studied independently or as a joint optimization problem in an attempt to take advantage of the best properties from each model. To the best of our knowledge, these reformulations have not yet been compared in the existing literature. This paper is a first attempt towards establishing whether one of these reformulations is best at solving a given class of the optimistic bilevel optimization problem. We design a comparison framework, which seems fair, considering the theoretical properties of these reformulations. This work reveals that although none of the models seems to particularly dominate the other from the theoretical point of view, the value function reformulation seems to numerically outperform the Karush-Kuhn-Tucker reformulation on a Newton-type algorithm. The computational experiments here are mostly based on test problems from the Bilevel Optimization LIBrary (BOLIB).
\end{abstract}
\maketitle

\section{Introduction}\label{Introduction}
Our focus in this paper is the standard optimistic bilevel optimization problem
\begin{equation}\label{eq:P}\tag{P} 
  \underset{x,y}\min~F(x,y)\;\;\mbox{s.t.}\;\; G(x,y)\le 0,\;\, y\in S(x),
\end{equation}
also known as the upper-level problem. The functions $F:\R^n\times\R^m\to\R$ and $G:\R^n\times\R^m\to\R^p$ represent the upper-level objective and upper-level constraint functions, respectively, while $x\in \mathbb{R}^n$ (resp. $y\in \mathbb{R}^m$) corresponds to the upper-level (resp. lower-level) variable. Note that we have $n, m, p\in { {\mathbb{N}^*:=\{1, \, 2, \ldots\}}}$. In the sequel, we collect all the feasible upper-level variables as follows:
\begin{equation}\label{Upper-level-feasible set}
  X:=\left\{x\in \mathbb{R}^n|\; \exists y\in \mathbb{R}^m:\;\, G(x,y)\leq 0 \right\}.
\end{equation}
In problem \eqref{eq:P}, the set-valued mapping $S: \mathbb{R}^n \rightrightarrows \mathbb{R}^m$ describes the set of optimal solutions of the following parametric optimization problem, known as the lower-level problem:
\begin{equation}\label{lower-level problem}
    \underset{y}\min~\left\{f(x,y)\mid g(x,y)\le 0\right\}.
\end{equation}
That is, precisely, we have
\begin{equation}\label{S Map}
S(x):= \left\{\begin{array}{lll}
                \arg\underset{y}\min~\left\{f(x,y)\mid g(x,y)\le 0\right\} & \mbox{if} & x\in X,\\
                \emptyset & \mbox{if} & x\in \mathbb{R}^n \setminus X.
              \end{array}
\right.
\end{equation}
The functions $f:\R^n\times\R^m\to\R$ and $g:\R^n\times\R^m\to\R^q$ (with $q\in \mathbb{N}^*$) correspond to the lower-level objective and lower-level constraint functions, respectively.

Throughout this paper, the upper- and lower-level problems are constrained only by inequality constraints, for the sake of simplicity. However, all the analysis conducted here remains valid (of course with the corresponding adjustments) if equality constraints are added to the upper-level and/or lower-level feasible set of problem \eqref{eq:P}. Furthermore, to focus our attention only on the main points, we assume throughout the paper that $S(x) \neq \emptyset $ for all $x\in X$.

In the pursue of tractable approaches to solve \eqref{eq:P} from the perspective of standard constrained optimization, two main approaches have been considered to reformulate the problem as a single-level optimization problem. Considering the Lagrangian function
\begin{equation}\label{def-h}
  \ell(x,y,z):= f(x,y) + z^\top g(x,y),
\end{equation}
of problem \eqref{lower-level problem} and assuming that $f$ and $g$ are differentiable w.r.t. $y$, the first one-level reformulation of problem \eqref{eq:P} is the Karush-Kuhn-Tucker (KKT) reformulation  that can be written as
\begin{equation}\label{eq:KKTR}\tag{KKTR} 
\begin{array}{rl}
   \underset{x,\, y, \, z}\min & F(x,y) \\
  \, \mbox{ s.t. }              & G(x,y)\leq 0, \ \ \nabla_2\ell(x,y, z)=0,\\
                                & \,\,g(x,y)\leq 0, \ \ z \geq 0, \ \ z^\top g(x,y)=0,
\end{array}
\end{equation}
where $\nabla_2\ell$ corresponds to the gradient of $\ell$ w.r.t. $y$.
The second main approach to transform \eqref{eq:P} into a single-level optimization problem is the lower-level value function (LLVF) reformulation
\begin{equation}\label{eq:LLVFR}\tag{LLVFR} 
\begin{array}{lll}
 &  \underset{x,\,y}\min & F(x,y) \\
                          &  \mbox{ s.t. }        & G(x,y)\leq 0,  \ \   g(x,y)\leq 0,\ \ f(x,y)-\varphi(x)\leq 0,
\end{array}
\end{equation}
where  $\varphi$ denotes the optimal value function of the lower-level problem:
\begin{equation}\label{varphi}
   \varphi(x) := \underset{y}\min~\left\{f(x,y)\mid g(x,y)\leq 0\right\}.
\end{equation}

Both problems \eqref{eq:KKTR} and \eqref{eq:LLVFR} have been independently studied in various papers. For example, \cite{DempeDuttaBlpMpec2010} provides a detailed analysis of the relationship between \eqref{eq:KKTR} and the original problem \eqref{eq:P}. Solution algorithms specifically tailored to reformulation \eqref{eq:KKTR} can be found in \cite{BardBook,DempeFoundations2002,DempeFranke2019, MershaDempe2012}, for example. As for \eqref{eq:LLVFR}, most of the work so far has been dedicated to the development of optimality conditions (see, e.g., \cite{DempeDuttaMordukhovichNewNece,DempeZemkohoGenMFCQ,MehlitzZemkohoSufficient2019, YeZhuOptCondForBilevel1995}), but a few works on numerical methods have appeared recently. Namely, the papers \cite{LinXuYeOnSolving2014,XuYeASmoothing2014,XuYeZhang2015Smoothing} propose methods for nonlinear bilevel optimization based on \eqref{eq:LLVFR}; algorithms in \cite{DempeFrankeSolution2014,DempeFranke2016OntheConvex} suggest techniques to solve special cases of problem \eqref{eq:LLVFR}, where relaxation schemes are used to deal with the value function \eqref{varphi}; the authors of \cite{LamparielloSagratella2017,LamparielloSagratella2017Numerically}
proposed numerical methods to solve special bilevel programs by exploiting a connection between problem \eqref{eq:LLVFR}
and a generalized Nash equilibrium problem. A semismooth Newton-type method for \eqref{eq:LLVFR} is developed in \cite{FischerZemZhou2019}. A few papers (see, e.g., \cite{XuYeZhang2015Smoothing,YeZhuNewReform2010}) have also proposed methods based on a combination of \eqref{eq:KKTR} and \eqref{eq:LLVFR}, in order to take advantage of some interesting features from each of these reformulations.


Note however that problems \eqref{eq:KKTR} and \eqref{eq:LLVFR} taken separately, are reformulations of the same problem \eqref{eq:P}, but which by their nature, seem to be far apart from each other. It therefore seems interesting to find a way to compare them.  
This paper is a first attempt towards establishing whether one of these reformulations is best at solving a given class of problem \eqref{eq:P}. Our framework for comparing problems \eqref{eq:KKTR} and \eqref{eq:LLVFR} revolves around the 5 questions below, which gradually go from the basic considerations often taken into account when solving an optimization problem to a numerical performance from the perspective of a certain numerical method:\\[-1.5ex]

\noindent \textbf{(Q1)} \emph{How are problems \eqref{eq:KKTR} and \eqref{eq:LLVFR} related to problem \eqref{eq:P} and how do the requirements for these problems to be smooth or locally Lischitz continuous optimization problems compare to each other}? This question is considered in Subsection \ref{Nature of reformulations and relationships to original problem}, where first of all, we discuss the challenges in solving \eqref{eq:P} via \eqref{eq:KKTR} or \eqref{eq:LLVFR} and highlight the fact that despite the apparent strong conditions needed to ensure a close link between the former problem and \eqref{eq:P}, aiming to solve \eqref{eq:LLVFR} as a smooth ($\mathcal{C}^1$) optimization problem requires even much stronger conditions.\\[-1.5ex]

\noindent \textbf{(Q2)} \emph{How do the qualification conditions needed to derive necessary optimality conditions for \eqref{eq:KKTR} and \eqref{eq:LLVFR} relate to each other}? The analysis of this question conducted in Subsection \ref{Qualification conditions} shows that on top of the aforementioned technical requirements to establish \eqref{eq:LLVFR} as a smooth or local Lipschitz optimization problem, the model does not seem to provide the same level of flexibility for the fulfilment of qualification conditions that \eqref{eq:KKTR} enjoys thanks to its connection to mathematical programs with equilibrium constraints (MPCCs).\\[-1.5ex]

\noindent \textbf{(Q3)} \emph{How do the optimality conditions resulting from \eqref{eq:KKTR} and \eqref{eq:LLVFR} compare to each other}? Acknowledging the fact that each of these reformulations can lead to a wide variety of optimality concepts, we identify specific classes of conditions and problems that enable sensible relationships between the two models and subsequently to tractable solution algorithms; see Subsection \ref{Optimality conditions} for  details on the optimality conditions and Section \ref{Newton method for the auxiliary equation} for the algorithms.\\[-1.5ex]

\noindent \textbf{(Q4)} \emph{How do the qualification conditions necessary to establish the convergence results for a corresponding version of the semismooth Newton method compare to each other}? As the standard framework to analyse and solve \eqref{eq:LLVFR} is via the partial exact penalization  \cite{YeZhuOptCondForBilevel1995}, considering a similar approach for \eqref{eq:KKTR} seems to be the most sensible methodological approach to compare our reformulations. Hence, as a by-product of this paper, we develop, probably for the first time, a semismooth Newton-type algorithm for \eqref{eq:KKTR} and compare it to the corresponding algorithm for \eqref{eq:LLVFR} developed in \cite{FischerZemZhou2019}. As expected, \eqref{eq:KKTR} appears to be more demanding, in terms of the derivative requirement for the lower-level problem (3rd order derivatives are necessary for convergence analysis and implementation). However, from a theoretical point of view, there does seem to be a clear dominance of the qualification conditions for convergence of the method of one model on the other one; cf. Section \ref{Newton method for the auxiliary equation}.\\[-1.5ex]

\noindent \textbf{(Q5)} \emph{Which one from problems \eqref{eq:KKTR} and \eqref{eq:LLVFR} leads to a more efficient algorithm, in terms of number of iterations, computing time, numerical accuracy, and rate of convergence}? Our computational experiments, based on various test problems, including those from the BOLIB Library \cite{BOLIB2017}, show that our semismooth Newton method for \eqref{eq:LLVFR} generally outperforms the one based on the \eqref{eq:KKTR} model, for all the aforementioned performance measures. One of the surprising observation from the numerical computation is that the superiority of \eqref{eq:LLVFR} remains for all the aforementioned measures, even for problem classes where 3rd order derivatives for lower-level problems are non-zero, despite the expectation that in this case, better approximations of the curvature of the lower-level optimal solution set could potentially reduce the number of iterations.\\[-2ex]

Overall, as it will be clear from the analysis in the remaining sections, the main take away of this paper is that from the theoretical point of view, it is not possible to claim that one of the reformulations is better than the other, although \eqref{eq:KKTR} seems to provide a framework for more tractable qualification conditions for optimality conditions an convergence analysis, thanks to its relationship to MPCCs. However, from the numerical perspective, the framework and test problems considered in this paper suggest that problem \eqref{eq:LLVFR} is a much better option.

After addressing (Q1) in Subsection \ref{Nature of reformulations and relationships to original problem}, the general framework for the analysis of the other questions is introduced in Subsection \ref{Framework for optimality conditions and numerical comparison}. Questions (Q2), (Q3), (Q4), and (Q5) are then addressed, in this order, in the subsequent parts of the paper.

\section{Links to the original problem and framework for comparison}
To start this section, we introduce some notation that will be used throughout the paper. Namely, we associate a number of index sets to the inequality constraints involved in problems \eqref{eq:KKTR} and \eqref{eq:LLVFR}. For instance, for a point  $(\bar x, \bar y)$  in the upper-level feasible set of problem \eqref{eq:P}, we denote the indices of the constraints active at this point by
\begin{equation}\label{I1}
   I^1 := I^G(\bar x, \bar y) :=  \left\{i~|\;\, G_i(\bar x,\bar y)=0\right\}.
\end{equation}
Since part of the analysis to be conducted in this paper will be based on the stationary points of each of the reformulations above, we associate to a point $(\bar x, \bar y)$ in the upper-level feasible set, a Lagrange multiplier $\bar u$. Then, considering the fact that the optimality conditions for problem \eqref{eq:KKTR} or \eqref{eq:LLVFR} will lead to the complementarity system $\bar u \geq 0$, $G(\bar x, \bar y)\leq 0$, $\bar u^\top G(\bar x, \bar y)=0$, we partition the corresponding indices in the following standard way:
\begin{equation}\label{multiplier sets}
\begin{array}{l}
  \eta^1   :=   \eta^G(\bar x,    \bar y ,  \bar u) := \{i~|\;\, \bar u_i =0, \;\, G_i(\bar x,\bar y)<0\},\\
\theta^1   :=   \theta^G(\bar x,  \bar y ,  \bar u) := \{i~|\;\, \bar u_i =0, \;\, G_i(\bar x,\bar y)=0\},\\
   \nu^1   :=   \nu^G(\bar x,     \bar y ,  \bar u) := \{i~|\;\, \bar u_i >0, \;\, G_i(\bar x,\bar y)=0\}.
\end{array}
\end{equation}
Similarly, considering the constraint $g(x, y)\leq 0$ appearing in \eqref{eq:KKTR} and \eqref{eq:LLVFR}, as well as $z \geq 0$ in \eqref{eq:KKTR}, the corresponding index sets at $(\bar x, \bar y)$ (resp. $(\bar x,\bar y , \bar v)$), $\bar z$ (resp. $(\bar z, \bar w)$), and $(\bar x, \bar z)$  (resp. $(\bar x, \bar z , \bar w)$), where $\bar v$, $\bar w$, and $\bar w$ represent the Lagrange multipliers, are respectively defined as 
\begin{equation}\label{nu2nu3}
    \begin{array}{llll}
 I^2:= I^g(\bar x,\bar y), & \eta^2:= \eta^g(\bar x,\bar y , \bar v), & \theta^2:=\theta^g(\bar x,\bar y , \bar v), & \nu^2:=\nu^g(\bar x,\bar y , \bar v),\\
 I^3:= I^{z}(\bar z), & \eta^3  := \eta^z(\bar z, \bar w), & \theta^3  :=  \theta^z(\bar z, \bar w), & \nu^3  :=  \nu^z(\bar z, \bar w),\\
I^4 := I^g(\bar x, \bar z), & \eta^4 := \eta^g(\bar x, \bar z, \bar w), & \theta^4:=\theta^g(\bar x, \bar z, \bar w), & \nu^4 := \nu^g(\bar x,\bar z , \bar w)
\end{array}
\end{equation}
with the first and second lines here being associated to \eqref{eq:KKTR} and the last one related to problem \eqref{eq:LLVFR}.
For vectors $d^i\in \mathbb{R}^n$, $d^j\in \mathbb{R}^m$, and $d^k \in \mathbb{R}^p$, for example, with $i, j, k\in \mathbb{N}$,  $d^{ij}$ and $d^{ijk}$ represent the combined vectors
\[
d^{ij} := \left[\begin{array}{c}
                  d^i\\
                  d^j
                \end{array}\right] \;\; \mbox{ and } \;\; d^{ijk} := \left[\begin{array}{c}
                  d^i\\
                  d^j\\
                  d^k
                \end{array}\right],
\]
respectively. For a function $\psi: \mathbb{R}^{\tilde{n}}\times \mathbb{R}^{\tilde{m}}\times \mathbb{R}^{\tilde{p}} \rightarrow \mathbb{R}^{\tilde{q}}$, $\nabla_i \psi(a, b, c)$ with $i\in \{1, 2, 3\}$, corresponds to the gradient of $\psi$ w.r.t. the $i$th variable $a$, $b$ or $c$. Furthermore, unless otherwise stated, $\nabla_{i, j}\psi(a, b, c)$ (resp. $\nabla^2_{ij}\psi(a, b, c)$) with $i, j\in \{1, 2, 3\}$, denotes the gradient (resp. second order derivative) of $\psi$ w.r.t. the $i$th and $j$th variables. Note that in the sequel, the corresponding versions of the function $\psi$ could have 2, 3, 4 or more variables. In those cases, the same logic presented here will be used.

\subsection{Nature of reformulations and relationships to original problem} \label{Nature of reformulations and relationships to original problem} 
We start this subsection by looking at the relationships between \eqref{eq:P} and its reformulations \eqref{eq:KKTR} and \eqref{eq:LLVFR}. For the link between \eqref{eq:KKTR} and \eqref{eq:P}, we need two properties; i.e., convexity and a lower-level regularity condition. These assumptions are needed to help ensure that inclusion $y\in S(x)$ can be written in terms of the KKT conditions present in the feasible set of problem \eqref{eq:KKTR}.

\begin{defn}[lower-level convexity]
The lower-level optimization problem \eqref{lower-level problem} is said to be \emph{convex} if the functions $f(x, .)$ and $g_i(x, .)$, $i=1, \ldots, q$ are convex for all $x\in X$. The problem will be said to be \emph{fully convex} if the latter functions are convex w.r.t. $(x,y)$.
\end{defn}
{ {For the lower-level regularity, we use the standard \emph{lower-level} Mangasarian-Fromowitz constraint qualification (LMFCQ), which holds at $(\bar x, \bar y)$ if there exits $d$ such that
\begin{equation}\label{LMFCQ}
 \nabla_2 g_i(\bar x, \bar y)^\top d < 0 \;\, \mbox{ for all } \;\, i\in I^2.
\end{equation}}}
For a point $(x, y)$ such that $G(x,y)\leq 0$ and $(x,y)\in \text{gph}\,S$, it is well-known that if the LMFCQ holds at $(x, y)$, then the following set of lower-level Lagrange multipliers is non-empty:
\begin{equation}\label{Lambda(x,y)}
\Lambda(x, y):=\left\{\left. z\in \mathbb{R}^q\right|\;\, \nabla_2\ell(x, y, z)=0, \;\, z\geq 0, \;\, g(x, y)\leq 0, \;\, z^{\top}g(x, y)=0\right\}.
\end{equation}
We have the following result established in \cite{DempeDuttaBlpMpec2010}. 
\begin{thm}[local and global relationship between \eqref{eq:KKTR} and \eqref{eq:P}]\label{DempeDutta-Stuff}Let $G$ be independent from $y$ and $f(x, .)$ and $g_i(x, .)$, $i=1, \ldots, q$ be convex and  $\mathcal{C}^1$  for all $x\in X$. Then, the following statements hold:
\begin{itemize}
  \item[(i)] Let $(\bar x, \bar y)$ be  globally (resp. locally) optimal for \eqref{eq:P} and the LMFCQ be satisfied at $(\bar x, y)$, $y\in S(\bar x)$. Then, for each $z \in \Lambda(\bar x, \bar y)$, the point $(\bar x, \bar y, z)$ is a global (resp. local) optimal solution of \eqref{eq:KKTR}.
  \item[(ii)] Let the LMFCQ hold at all $(x, y)$, $y\in S(x)$, $x\in X$ (resp. at $(\bar x, y)$, $y\in S(\bar x)$) and $(\bar x, \bar y, z)$ be a global (resp. local) optimal solution (resp. for all $z \in \Lambda(\bar x, \bar y)$) of  \eqref{eq:KKTR}, then the point $(\bar x, \bar y)$ is a global (resp. local) optimal solution of problem \eqref{eq:P}.
\end{itemize}
\end{thm}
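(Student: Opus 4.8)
The plan is to base everything on a single equivalence: under lower-level convexity together with the LMFCQ \eqref{LMFCQ}, for a fixed $x\in X$ the inclusion $y\in S(x)$ is equivalent to the solvability of the lower-level KKT system, i.e. to $\Lambda(x,y)\neq\emptyset$, with $\Lambda$ as in \eqref{Lambda(x,y)}. This rests on two classical facts about the convex parametric program \eqref{lower-level problem}: \emph{necessity}, namely that if $\bar y$ minimizes $f(x,\cdot)$ over $\{y:g(x,y)\le 0\}$ and LMFCQ holds at $(x,\bar y)$, then a multiplier $z\in\Lambda(x,\bar y)$ exists; and \emph{sufficiency}, namely that if $z\in\Lambda(x,y)$, then the convexity of the Lagrangian $\ell(x,\cdot,z)$ (recall \eqref{def-h}) together with $\nabla_2\ell(x,y,z)=0$ and complementarity forces $y$ to globally minimize $f(x,\cdot)$ on the feasible set, i.e. $y\in S(x)$. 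Since $F$ does not depend on $z$, these two facts give a value-preserving correspondence between the feasible set of \eqref{eq:P} and that of \eqref{eq:KKTR}: a pair $(x,y)$ is feasible for \eqref{eq:P} exactly when $(x,y,z)$ is feasible for \eqref{eq:KKTR} for some (indeed every) $z\in\Lambda(x,y)$.

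With this correspondence in hand, the two global assertions are short contradiction arguments. For the global part of (i), feasibility of $(\bar x,\bar y,z)$ for \eqref{eq:KKTR} follows from $z\in\Lambda(\bar x,\bar y)$ and $G(\bar x,\bar y)\le 0$; were some feasible $(x',y',z')$ of \eqref{eq:KKTR} to satisfy $F(x',y')<F(\bar x,\bar y)$, then by sufficiency $(x',y')$ would be feasible for \eqref{eq:P} with strictly smaller value, contradicting global optimality of $(\bar x,\bar y)$. The global part of (ii) is the mirror image: $(\bar x,\bar y)$ is feasible for \eqref{eq:P} by sufficiency, and any feasible $(x',y')$ of \eqref{eq:P} lifts, via LMFCQ and necessity, to a feasible triple $(x',y',z')$ of \eqref{eq:KKTR}; global optimality of $(\bar x,\bar y,z)$ then yields $F(x',y')\ge F(\bar x,\bar y)$. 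Here the assumption that LMFCQ holds at \emph{all} $(x,y)$ with $y\in S(x)$, $x\in X$, is exactly what guarantees that the lift is available at every competitor.

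The local statements require care, because optimality for \eqref{eq:KKTR} lives in $(x,y,z)$-space while optimality for \eqref{eq:P} lives in $(x,y)$-space. The local part of (i) is still easy: taking the product neighborhood $U\times\R^q$, where $U$ witnesses local optimality of $(\bar x,\bar y)$ for \eqref{eq:P}, every \eqref{eq:KKTR}-feasible triple in it projects by sufficiency to a \eqref{eq:P}-feasible pair in $U$, so its value cannot beat $F(\bar x,\bar y)$. The local part of (ii) is the real obstacle: a pair $(x',y')$ close to $(\bar x,\bar y)$ and feasible for \eqref{eq:P} lifts to a feasible triple of \eqref{eq:KKTR}, but the associated multiplier need not be close to the prescribed $z$, so local optimality of a single triple $(\bar x,\bar y,z)$ says nothing a priori.

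I would resolve this by a sequential compactness argument, which is precisely why the statement quantifies over \emph{all} $z\in\Lambda(\bar x,\bar y)$. Suppose $(\bar x,\bar y)$ is not locally optimal for \eqref{eq:P}; pick feasible $(x^k,y^k)\to(\bar x,\bar y)$ with $F(x^k,y^k)<F(\bar x,\bar y)$, and use necessity to select $z^k\in\Lambda(x^k,y^k)$. The key step is to show $\{z^k\}$ is bounded. This is the quantitative content of LMFCQ: the qualification at $(\bar x,\bar y)$ persists on a neighborhood, since constraints inactive at the limit stay inactive and the same feasible direction $d$ keeps $\nabla_2 g_i(x^k,y^k)^\top d<0$ for the active indices by continuity, and such a persistent MFCQ yields a uniform bound on the multipliers (Gauvin's theorem, i.e. local boundedness/outer semicontinuity of $(x,y)\mapsto\Lambda(x,y)$). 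Passing to a convergent subsequence $z^k\to z^*$ and using the closedness of the system defining \eqref{Lambda(x,y)} gives $z^*\in\Lambda(\bar x,\bar y)$; then $(x^k,y^k,z^k)\to(\bar x,\bar y,z^*)$ are \eqref{eq:KKTR}-feasible with $F$-values below $F(\bar x,\bar y)$, contradicting local optimality of $(\bar x,\bar y,z^*)$ --- which is available precisely because the hypothesis assumes it for every element of $\Lambda(\bar x,\bar y)$. The boundedness of $\{z^k\}$ is the crux of the whole proof; everything else is feasibility bookkeeping driven by the convexity-plus-LMFCQ equivalence.
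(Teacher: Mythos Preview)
Your argument is correct and follows the standard route. The paper itself does not prove this theorem: it simply states the result and attributes it to Dempe and Dutta \cite{DempeDuttaBlpMpec2010}, so there is no in-paper proof to compare against. Your proof is essentially the one given in that reference: the convexity-plus-LMFCQ equivalence between $y\in S(x)$ and $\Lambda(x,y)\neq\emptyset$ handles the global statements and the local part of (i) directly, and the local part of (ii) is obtained exactly via the sequential argument you describe, with Gauvin-type local boundedness of the multiplier map supplying the compactness needed to land the limit multiplier inside $\Lambda(\bar x,\bar y)$ and invoke the ``for all $z$'' hypothesis.
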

It is shown in \cite{DempeDuttaBlpMpec2010} that this result is very sensitive to convexity and the constraint qualification, as it typically fails if one of these assumptions does not hold. More details on the results can be found in that paper; some of the key issues faced when attempting to solve \eqref{eq:P} via \eqref{eq:KKTR} can be found in the next examples below. Before that, we state the equivalence between \eqref{eq:P} and \eqref{eq:LLVFR}, which is valid locally and globally without any assumption (apart from requiring that $\mbox{gph}S \neq \emptyset$, assumed to be valid throughout the paper as stated in the introduction), given that the lower-level optimal solution set-valued mapping $S$ \eqref{S Map} can be equivalently written as
$$
S(x):=\left\{y\in \mathbb{R}^m\,|\; g(x,y)\leq 0, \;\; f(x,y) - \varphi(x) \leq 0 \right\} \;\;\mbox{ for all }\;\, x\in X.
$$
\begin{thm}[local and global relationship between (LLVF) and \eqref{eq:P}]\label{LLVFR eq P}
$(\bar x, \bar y)$ is a local (resp. global) optimal solution of \eqref{eq:LLVFR} if and only if the point is a local (resp. global) optimal solution of \eqref{eq:P}.
\end{thm}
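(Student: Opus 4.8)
The plan is to reduce the entire statement to a single set equality: I would show that the feasible set of \eqref{eq:LLVFR} coincides, as a subset of $\mathbb{R}^n\times\mathbb{R}^m$, with the feasible set of \eqref{eq:P}. Once this is established, both problems minimize the very same objective $F$ over exactly the same feasible set, so the equivalence of global minimizers is immediate; and since local optimality is tested against feasible points lying in a neighborhood in $\mathbb{R}^n\times\mathbb{R}^m$, the equivalence of local minimizers follows by using the \emph{identical} neighborhood in both directions.

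The heart of the argument is the characterization of $S$ recalled just before the statement, namely that for every $x\in X$ one has $S(x)=\{y\mid g(x,y)\le 0,\ f(x,y)-\varphi(x)\le 0\}$. First I would verify this: by definition $y\in S(x)$ means that $y$ is feasible for the lower-level problem, i.e. $g(x,y)\le 0$, and attains its optimal value, i.e. $f(x,y)=\varphi(x)$. Conversely, for any lower-level feasible $y$ the inequality $f(x,y)\ge\varphi(x)$ holds automatically by the definition of $\varphi$ in \eqref{varphi}, so imposing $f(x,y)-\varphi(x)\le 0$ together with $g(x,y)\le 0$ forces $f(x,y)=\varphi(x)$, which is exactly $y\in S(x)$. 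This uses only that $\varphi(x)$ is finite and attained on $X$, which is guaranteed by the standing assumption that $S(x)\neq\emptyset$ for all $x\in X$.

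With this in hand, I would assemble the feasible-set equality. A point $(x,y)$ is feasible for \eqref{eq:P} if and only if $G(x,y)\le 0$ and $y\in S(x)$; the constraint $G(x,y)\le 0$ already forces $x\in X$, so the characterization applies and the inclusion $y\in S(x)$ may be replaced by the pair $g(x,y)\le 0$ and $f(x,y)-\varphi(x)\le 0$. This is precisely the defining system of \eqref{eq:LLVFR}, so the two feasible sets agree; conversely, any point feasible for \eqref{eq:LLVFR} satisfies $G(x,y)\le 0$, hence $x\in X$, and the same characterization returns $y\in S(x)$. I would then spell out the local case explicitly: if $(\bar x,\bar y)$ is locally optimal for one problem with neighborhood $U$, the set equality shows it is locally optimal for the other with the same $U$.

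The only point requiring attention, and it is a matter of bookkeeping rather than a genuine obstacle, is the restriction $x\in X$ in the characterization of $S$ and the need for $\varphi$ to be well-defined there; both are handled by noting that $G(x,y)\le 0$ is present in both reformulations and by invoking the standing nonemptiness assumption. It is worth emphasizing that, in contrast with Theorem \ref{DempeDutta-Stuff}, no convexity or constraint-qualification hypothesis enters anywhere in this argument, which is exactly the structural difference between the two reformulations that the statement is meant to highlight.
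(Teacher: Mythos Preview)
Your proposal is correct and follows exactly the approach the paper takes: the paper does not write out a formal proof but simply records, immediately before the statement, the identity $S(x)=\{y\mid g(x,y)\le 0,\ f(x,y)-\varphi(x)\le 0\}$ for $x\in X$, which is precisely the feasible-set equality you develop. Your write-up merely fills in the routine details (verifying the identity via the definition of $\varphi$ and the standing assumption $S(x)\neq\emptyset$, and spelling out the local case), so there is nothing substantively different to compare.
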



\begin{exmp}[problem \eqref{eq:LLVFR} has an optimal solution but \eqref{eq:KKTR} does not have one]
Consider an example of problem \eqref{eq:P} from \cite{DempeDuttaBlpMpec2010}  with the corresponding functions defined by
\begin{equation*}\label{Example1}
 F(x,y) := x, \;\, G(x,y):=-x, \;\, f(x,y):= y_1, \;\, \mbox{ and } \;\, g(x,y):= \left(y^2_1 - y_2 - x, \; y^2_1 + y_2\right)^\top.
\end{equation*}
For this example, it is shown in \cite{DempeDuttaBlpMpec2010} that $(\bar x, \bar y)=(0, 0)$ is the global optimal solution of \eqref{eq:P} and hence of problem \eqref{eq:LLVFR} (cf. Theorem \ref{LLVFR eq P}), but \eqref{eq:KKTR} does not have any solution.
\end{exmp}

\begin{exmp}[problems \eqref{eq:LLVFR} and \eqref{eq:KKTR} both have optimal solutions, but which are completely different from each other] Consider an example of problem \eqref{eq:P} from \cite{DempeDuttaBlpMpec2010}  with the corresponding functions defined by
\begin{equation}\label{Example1}
F(x,y):= (x-1)^2 + y^2, \;\; f(x,y):= x^2y, \; \mbox{ and } \; g(x,y):= y^2.
\end{equation}
The global optimal solution of the corresponding problem \eqref{eq:P} is $(\bar x, \bar y)=(1, 0)$. The feasible sets of \eqref{eq:KKTR} and \eqref{eq:LLVFR} can be  respectively obtained as
\[
S_1 = \left\{\left.(x,y,z)\right|z\in \mathbb{R}_+,\;\,x=y=0\right\}\; \mbox{ and } \; S_2 = \left\{\left.(x,y)\right|~x\in\mathbb{R},\;\, y=0\right\}.
\]
Obviously, $(1, 0)$ is also the optimal solution of \eqref{eq:LLVFR}. However, the global optimal solution of \eqref{eq:KKTR} is $(0, 0, z)$ for any $z\geq 0$. It is pointed out in \cite{DempeDuttaBlpMpec2010} that this is due to the failure of the lower-level regularity condition; cf. Theorem \ref{DempeDutta-Stuff}.
This clearly shows that the optimal solution of problem \eqref{eq:KKTR} need not be optimal for \eqref{eq:P}, while this cannot be the case for \eqref{eq:LLVFR}; cf. Theorem \ref{LLVFR eq P}.\\[1ex]
\end{exmp}

\begin{exmp}[problems \eqref{eq:LLVFR} and \eqref{eq:KKTR} both have optimal solutions which are equivalent in the sense of Theorem \ref{DempeDutta-Stuff}]\label{Example with pics} Here, we consider a variant of the problem described in \eqref{Example1}; i.e.,
\begin{equation}\label{Example1-new}
 F(x,y):= (x-1)^2 + y^2, \;\; f(x,y):= x^2y, \; \mbox{ and } \; g(x,y):= y^2-1,
\end{equation}
where the lower-level constraint function is slightly modified. The lower-level problem is convex and the LMFCQ holds at any lower-level feasible point. The points $(\bar x, \bar y) = (1, -1)$ and  $(\bar x, \bar y)=(0, 0)$ are global optimal solutions of the version of \eqref{eq:P} defined in \eqref{Example1-new} and hence of the corresponding \eqref{eq:LLVFR}. The feasible sets of \eqref{eq:KKTR} and \eqref{eq:LLVFR} in the context of \eqref{Example1-new} are respectively given by 
\[
S_1 = \left\{\left.(x,y,z)\right|~\left(x=z=0, \, y^2\leq 1\right)~{\rm or}~\left(x^2=2z~\geq 0, \, y=-1\right)\right\}\, \mbox{ and }\, S_2 = \left\{\left.(x,y)\right|~x^2y+x^2=0, \, y^2 \leq 1\right\}.
\]
Clearly, both $(\bar x, \bar y, \bar z)=(1, -1, 0.5)$ and $(\bar x, \bar y, \bar z)=(0, 0, 0)$ are optimal solutions of \eqref{eq:KKTR}. The feasible sets in Figure $\ref{FS}$ clearly exhibit the differences between the sizes of problems \eqref{eq:KKTR} and \eqref{eq:LLVFR}; the former has more variables and constraints than the latter. { {Table \ref{Table 1}, provided later in this section, contains more general details on the dimensions of the problems.}}
\end{exmp}
\begin{figure}[H]
\centering
    \includegraphics[width=0.50\linewidth]{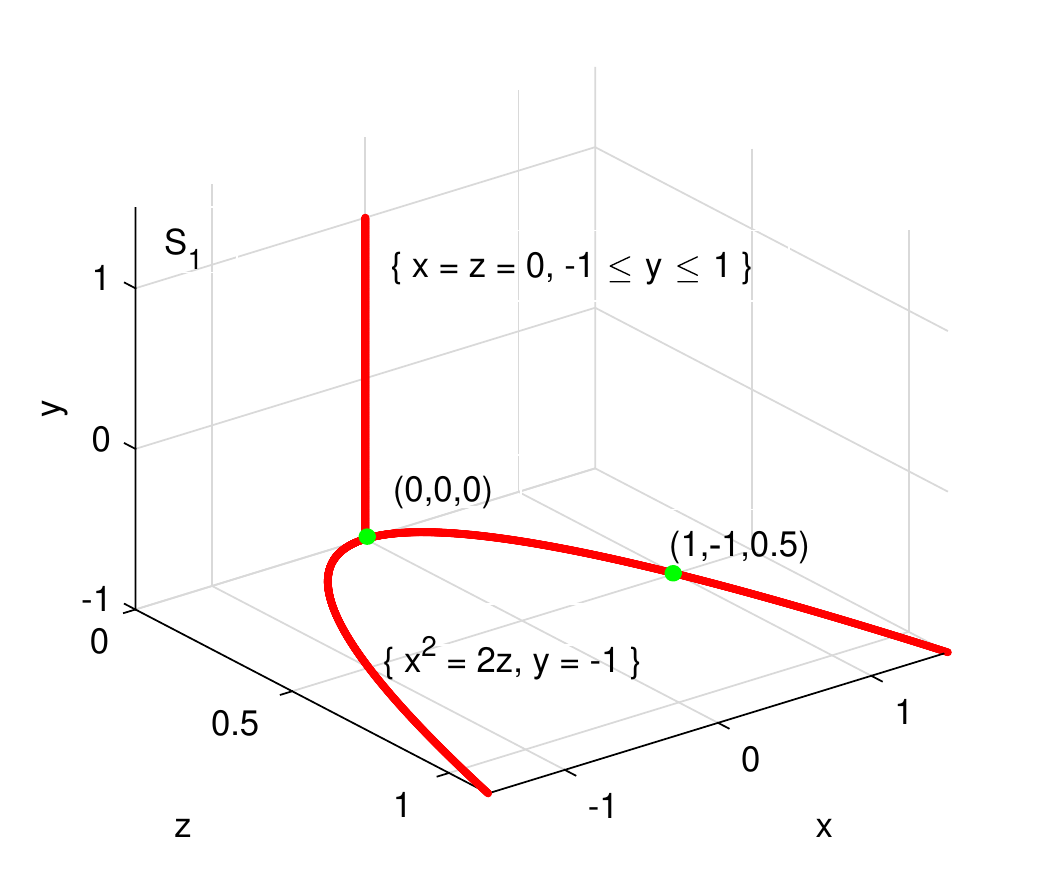}
    \includegraphics[width=0.44\linewidth]{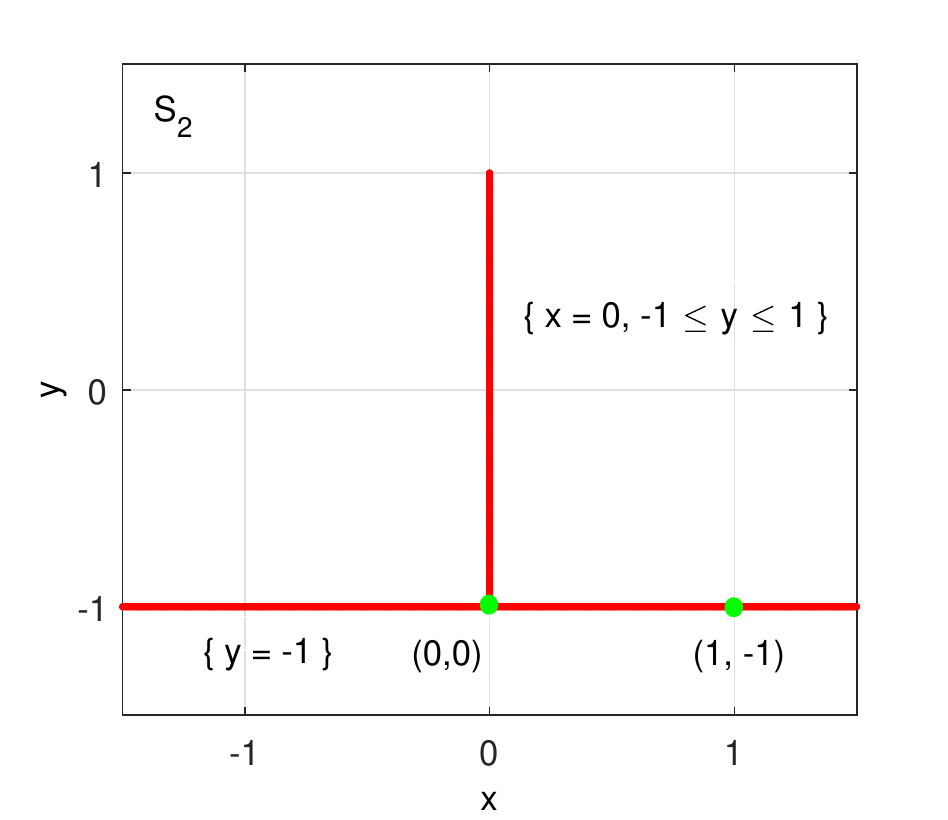}
  \caption{Feasible sets to \eqref{eq:KKTR} and \eqref{eq:LLVFR}. The red lines represent the feasible sets and green points stand for the global optimal solutions.}
  \label{FS}
\end{figure}

Now, we go beyond the relationship analysis  above and start to consider the question of how to solve problems \eqref{eq:KKTR} and \eqref{eq:LLVFR} and how the processes of getting to a \emph{solvable} model compare to each other. To design a solution algorithm for an optimization problem, the nature of the functions involved is critical in choosing the most suitable approach. For instance, we would like to look at the framework, based on problem \eqref{eq:P} data, ensuring that the reformulations under consideration are smooth ($\mathcal{C}^1$) or Lipschitz continuous optimization problems.  Obviously, \eqref{eq:KKTR} is a smooth optimization problem if the following conditions are satisfied:
\begin{equation}\label{smooth KKTR}
\left\{\begin{array}{l}
    F \; \mbox{ and } \; G \; \mbox{ are } \; \mathcal{C}^1;\\
   f(x, .) \; \mbox{ and } \; g(x, .) \; \mbox{ are } \; \mathcal{C}^1 \; \mbox{ for all } x\in \mathbb{R}^n;\\
   \nabla_2 f \; \mbox{ and } \; \nabla_2 g \; \mbox{ are } \; \mathcal{C}^1 \; \mbox{ in } (x, y).
  \end{array}\right.
\end{equation}
On the other hand, to ensure that \eqref{eq:LLVFR} is smooth, the following conditions need to hold:
\begin{equation}\label{smooth LLVFR}
  \begin{array}{l}
    F, \; G,\; f, \; g,\; \mbox{ and } \; \varphi \; \mbox{ are } \; \mathcal{C}^1.
  \end{array}
\end{equation}
Unfortunately, it is quite demanding to ensure that $\varphi$ is a smooth function. To guaranty that this is the case, we present the following result due to Fiacco \cite{FiaccoBook1983}.

For this result, we need another, much stronger, lower-level regularity condition. Namely, the lower-level linear independence constraint qualification (LLICQ) that will be said to hold at $(\bar x, \bar y)$ if the following family of gradients is linearly independent:
\begin{equation}\label{LLICQ}
  \left\{\left.\nabla_2 g_i(\bar x, \bar y)\,\right|\; i\in I^2\right\}.
\end{equation}
It is well-known that under this condition, the set of lower-level Lagrange multipliers  \eqref{Lambda(x,y)} is single-valued at the point $(\bar x, \bar y)$. Furthermore, considering $(\bar x, \bar z, \bar v)$ such that $\bar v \in \Lambda(\bar x, \bar z)\neq \emptyset$, the lower-level strict complementarity condition (LSCC) will be said to hold at this point if
\begin{equation}\label{LSCC}
 \theta^g(\bar x, \bar y, \bar v) = \emptyset.
\end{equation}
\begin{thm}[continuous differentiability of $\varphi$]\label{Differentiability of phi}
Let the functions $f$ and $g$ be $\mathcal{C}^2$. Suppose that the LLICQ and holds at $(\bar x, \bar y)$ and the lower-level  second order sufficient condition (LSOSC)
\begin{equation}\label{SSOC}
d^{\top}\nabla_{22}^2 \ell(\bar x, \bar y, \bar v)d >0, \;\, \forall d\neq 0 \;\, \mbox{ s.t. }\;\,\nabla_2 f(\bar x, \bar y)^\top d=0, \;\, \nabla_2 g_i(\bar x, \bar y)^\top d\leq 0 \; \mbox{ for } \; i\in I^2,
\end{equation}
with $\Lambda(\bar x, \bar y)=\left\{\bar v\right\}$, holds. Furthermore, if the LSCC holds at $(\bar x, \bar y, \bar v)$, then, the function $\varphi$ is continuously differentiable around the point $\bar x$.
\end{thm}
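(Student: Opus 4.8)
The plan is to reduce the lower-level KKT system to a square nonlinear system, solve it for $(y,v)$ as a $\mathcal{C}^1$ function of $x$ by the implicit function theorem, and then differentiate the identity $\varphi(x)=f(x,y(x))$ through the envelope relation. First I would exploit the LSCC: since $\bar v_i>0$ for every $i\in I^2$ while $g_i(\bar x,\bar y)<0$ for $i\notin I^2$, continuity of $f$, $g$ and their first derivatives guarantees that, for $x$ close to $\bar x$ and $(y,v)$ close to $(\bar y,\bar v)$, the inactive constraints remain inactive (forcing $v_i=0$) and the active ones remain active (so $g_i(x,y)=0$). The complementarity system therefore collapses to the square $\mathcal{C}^1$ map
\begin{equation*}
\Phi(x,y,v_{I^2}) := \begin{pmatrix} \nabla_2 f(x,y) + \sum_{i\in I^2} v_i\,\nabla_2 g_i(x,y) \\ \left(g_i(x,y)\right)_{i\in I^2} \end{pmatrix},
\end{equation*}
in the $m+|I^2|$ unknowns $(y,v_{I^2})$, with $\Phi(\bar x,\bar y,\bar v_{I^2})=0$.

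The crux is to verify that the Jacobian of $\Phi$ with respect to $(y,v_{I^2})$ at the base point,
\begin{equation*}
J := \begin{pmatrix} \nabla^2_{22}\ell(\bar x,\bar y,\bar v) & B \\ B^\top & 0 \end{pmatrix}, \qquad B := \left[\nabla_2 g_i(\bar x,\bar y)\right]_{i\in I^2},
\end{equation*}
is nonsingular. Suppose $J(d,\mu)^\top=0$, that is $\nabla^2_{22}\ell(\bar x,\bar y,\bar v)\,d + B\mu=0$ and $B^\top d=0$. Left-multiplying the first equation by $d^\top$ and using $B^\top d=0$ gives $d^\top\nabla^2_{22}\ell(\bar x,\bar y,\bar v)\,d=0$. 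Now the LSCC forces the critical cone of the LSOSC to coincide with $\ker B^\top$: from the stationarity identity $\nabla_2 f(\bar x,\bar y)=-\sum_{i\in I^2}\bar v_i\,\nabla_2 g_i(\bar x,\bar y)$ together with $\bar v_i>0$, any $d$ satisfying $\nabla_2 f(\bar x,\bar y)^\top d=0$ and $\nabla_2 g_i(\bar x,\bar y)^\top d\leq 0$ must in fact satisfy $\nabla_2 g_i(\bar x,\bar y)^\top d=0$ for all $i\in I^2$. Hence the LSOSC says precisely that $\nabla^2_{22}\ell(\bar x,\bar y,\bar v)$ is positive definite on $\ker B^\top$, so $d=0$; then $B\mu=0$, and the LLICQ (full column rank of $B$) yields $\mu=0$.

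With $J$ invertible and $\Phi$ of class $\mathcal{C}^1$ (since $f,g\in\mathcal{C}^2$), the implicit function theorem produces $\mathcal{C}^1$ maps $x\mapsto y(x)$ and $x\mapsto v(x)$ near $\bar x$, with $(y(\bar x),v(\bar x))=(\bar y,\bar v)$, solving the lower-level KKT system; by the persistence of the active set recorded above, $y(x)$ is the locally unique lower-level optimal solution and $\varphi(x)=f(x,y(x))$ holds on a neighborhood of $\bar x$. Differentiating this identity by the chain rule, then substituting the stationarity relation $\nabla_2 f=-\sum_{i\in I^2}v_i\,\nabla_2 g_i$ and the differentiated active-constraint identities $\nabla_1 g_i(x,y(x))+\nabla y(x)^\top\nabla_2 g_i(x,y(x))=0$, yields the envelope formula $\nabla\varphi(x)=\nabla_1\ell(x,y(x),v(x))$. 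Since $y(\cdot)$ and $v(\cdot)$ are continuous and $\nabla_1\ell$ is continuous in all its arguments, $\nabla\varphi$ is continuous, i.e. $\varphi\in\mathcal{C}^1$ around $\bar x$. I expect the nonsingularity of $J$ to be the main obstacle: it is exactly here that all three hypotheses must be combined, with the LSCC playing the double role of freezing the active set under perturbation and reducing the LSOSC to positive definiteness on the subspace $\ker B^\top$.
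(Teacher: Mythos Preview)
The paper does not supply its own proof of this theorem; it simply attributes the result to Fiacco and moves on. Your argument is precisely the classical Fiacco sensitivity proof: freeze the active set via strict complementarity, apply the implicit function theorem to the reduced KKT system, and derive the envelope formula. The nonsingularity of $J$ is handled correctly --- in particular, your observation that under the LSCC the critical cone in \eqref{SSOC} coincides with $\ker B^\top$ (because $\nabla_2 f(\bar x,\bar y)^\top d=-\sum_{i\in I^2}\bar v_i\,\nabla_2 g_i(\bar x,\bar y)^\top d$ with $\bar v_i>0$ and each summand nonpositive) is exactly the point that makes the saddle-point matrix invertible.

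One step you pass over quickly deserves a sentence of care: the implicit function theorem delivers a $\mathcal{C}^1$ branch $y(x)$ of \emph{local} minimizers (the LSOSC persists under small perturbations, so $y(x)$ is a strict local solution of the lower-level problem), but $\varphi$ is the \emph{global} optimal value. To conclude $\varphi(x)=f(x,y(x))$ on a neighborhood of $\bar x$ you implicitly use that $\bar y\in S(\bar x)$ and that no other global minimizer can approach as $x\to\bar x$; this follows, e.g., from upper semicontinuity of $S$ together with the local uniqueness you have established, but it is worth stating. Apart from this, the argument is complete and matches the result the paper cites.
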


Obviously, these conditions are far much stronger than the ones needed for the KKT reformulation \eqref{eq:KKTR}. Moreover, even if one adds the assumptions of Theorem \ref{DempeDutta-Stuff} to \eqref{smooth KKTR}, the framework for \eqref{eq:LLVFR} to be smooth is still far more demanding than the one necessary for building \eqref{eq:KKTR} and ensuring that it is a smooth optimization problem. Furthermore, it is important to observe that the assumptions in Theorem \ref{Differentiability of phi} ensure the differentiability of $\varphi$ \eqref{varphi} only locally. Hence, it could happen that they hold everywhere but at the point of interest for a given bilevel optimization, as shown in the following example.
\begin{exmp}[$\varphi$ can be continuously differentiable everywhere but at the point of interest]
Consider the bilevel optimization problem \eqref{eq:P} in the case where
\begin{equation}\label{Exp2}
F(x,y):=x^2 + (y-1)^2, \;\; G(x,y):=x, \;\; f(x,y):= x(y-1), \;\mbox{ and } \; g(x,y):=\left(-y, \; y-1\right)^\top.
\end{equation}
One can easily check that the optimal solution and value mappings can be written as 
\[
S(x)=\left\{\begin{array}{lll}
              0 & \mbox{ if } & x>0,\\
               1 & \mbox{ if } & x<0,\\
               \left[0, 1\right] & \mbox{ if } & x=0,
            \end{array}
\right. \qquad \mbox{ and } \qquad \varphi(x)=\left\{\begin{array}{rll}
             -x & \mbox{ if } & x>0,\\
              0 & \mbox{ if } & x\leq 0,
            \end{array}
\right.
\]
respectively. Obviously, $\varphi$ is continuously differentiable everywhere, except at the point $\bar x=0$. Also, it is clear that the point $(\bar x, \bar y)=(0, \, 1)$ is globally optimal for our bilevel program. Looking at the nondifferentiability of $\varphi$ at $0$ in the context of Theorem \ref{Differentiability of phi}, the LLICQ holds at $(0, 1)$, but the LSOSC fails.
%
\end{exmp}

{ {The following collection of well-known results provides two main scenarios in which $\varphi$ can be locally Lipschitz continuous, see, e.g., \cite{BonnansShapiroBook2000, MordukhovichBook2006, RockafellarWetsBook1998} for details.}} 
For the second scenario of the result, we will need some continuity requirement on $S$ \eqref{S Map}. Namely, the set-valued mapping $S$ will be said to be \emph{inner semicontinuous} at $(\bar{x}, \bar{y})$, with $\bar y\in S(\bar x)$, if for every sequence $x^k\rightarrow \bar{x}$, there is a sequence of $y^k\in S(x^k)$ that converges to $\bar{y} $ as $k\rightarrow \infty$.
\begin{thm}[local Lipschitz continuity of the optimal value function]\label{Lipschitz continuity of phi} Considering the function $\varphi$ defined in \eqref{varphi}, the following statements hold true:
\begin{itemize}
  \item[(i)] If the lower-level problem is fully convex, then $\varphi$ is locally Lipschitz continuous. If additionally, LMFCQ holds at $(\bar x, \bar y)$, then we have
  \begin{equation}\label{Subdifferential of phi}
    \partial \varphi (\bar x) \; \subseteq \; \left\{\left.\nabla_1 \ell(\bar x, \bar y, v)\,\right| \;\; v\in \Lambda (\bar x, \bar y)\right\}.
  \end{equation}
  \item[(ii)] If $S$ is inner semicontinuous at $(\bar x, \bar y)\in \text{gph}~S$ and the LMFCQ holds at $(\bar x, \bar y)$, then $\varphi$ is Lipschitz continuous around $\bar x$ and moreover, inclusion \eqref{Subdifferential of phi} holds.
\end{itemize}
\end{thm}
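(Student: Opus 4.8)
The plan is to handle the two scenarios separately, in each case reducing the statement to a standard sensitivity result for the marginal function $\varphi$ and then reading off the Lagrangian-gradient form of the subdifferential estimate from the structure of $\ell$.

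For part (i), I would first argue that full convexity forces $\varphi$ to be convex. The set $\Omega := \{(x,y)\mid g(x,y)\leq 0\}$ is convex because each $g_i$ is jointly convex, so the extended-real-valued function $(x,y)\mapsto f(x,y)+\delta_\Omega(x,y)$ is jointly convex; its infimal projection onto the $x$-variable is exactly $\varphi$, and infimal projections of jointly convex functions are convex. Since the standing assumption $\mathrm{gph}\,S\neq\emptyset$ (with $S(x)\neq\emptyset$ on $X$) makes $\varphi$ finite near $\bar x$, and a convex function finite near a point is locally Lipschitz continuous on the interior of its effective domain, the first assertion follows. For the subdifferential estimate under the additional LMFCQ, I would invoke classical convex perturbation duality: LMFCQ guarantees $\Lambda(\bar x,\bar y)\neq\emptyset$, and the standard formula for the subdifferential of the optimal value of a convex parametric program expresses each subgradient of $\varphi$ at $\bar x$ as a partial $x$-derivative of the Lagrangian at an optimal multiplier $v\in\Lambda(\bar x,\bar y)$. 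Since $\nabla_1\ell(\bar x,\bar y,v)=\nabla_1 f(\bar x,\bar y)+\nabla_1 g(\bar x,\bar y)^\top v$, this is precisely inclusion \eqref{Subdifferential of phi}.

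For part (ii), convexity is dropped and the argument rests on Mordukhovich's sensitivity calculus for marginal functions. I would view $\varphi$ as the marginal function $\varphi(x)=\inf\{f(x,y)\mid y\in K(x)\}$ associated with the feasible-set mapping $K(x):=\{y\mid g(x,y)\leq 0\}$ and the solution mapping $S$. The LMFCQ at $(\bar x,\bar y)$ is exactly the constraint qualification ensuring that $K$ enjoys the Lipschitz-like (Aubin) property around $(\bar x,\bar y)$ and that its coderivative is governed by the multipliers in $\Lambda(\bar x,\bar y)$, while the inner semicontinuity of $S$ at $(\bar x,\bar y)$ is what localizes the infimum around $\bar y$ and yields finiteness and local Lipschitz continuity of $\varphi$ near $\bar x$. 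Applying the marginal-function subdifferential estimate then gives $\partial\varphi(\bar x)\subseteq \nabla_1 f(\bar x,\bar y)+D^{*}K(\bar x,\bar y)(\nabla_2 f(\bar x,\bar y))$, and evaluating the coderivative $D^{*}K(\bar x,\bar y)$ under LMFCQ reproduces the Lagrangian-gradient set on the right-hand side of \eqref{Subdifferential of phi}.

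The routine parts are the convexity bookkeeping in (i) and observing that LMFCQ plays the dual role of a constraint qualification in both settings. The main obstacle is the careful invocation of the coderivative chain rule in (ii): one must verify that LMFCQ simultaneously delivers the Lipschitz-like property of $K$ and the qualification condition needed for the subdifferential of the composition, and then compute $D^{*}K(\bar x,\bar y)$ explicitly so that the abstract estimate collapses to the concrete multiplier formula. As all of these ingredients are standard in \cite{BonnansShapiroBook2000, MordukhovichBook2006, RockafellarWetsBook1998}, I would present the two items as consequences of those results, supplying only the short verifications above rather than reproving the underlying variational calculus.
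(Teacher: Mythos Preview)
The paper does not give its own proof of this theorem: it introduces the result as ``a collection of well-known results'' and refers the reader to \cite{BonnansShapiroBook2000, MordukhovichBook2006, RockafellarWetsBook1998} for details. Your sketch is a faithful and correct outline of the standard arguments one finds in those references---convex infimal projection plus convex subdifferential calculus for (i), and the Mordukhovich marginal-function/coderivative machinery under inner semicontinuity and LMFCQ for (ii)---so there is nothing to compare against and no gap to flag.
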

This result shows that the framework to ensure that $\varphi$ is just locally Lipschitz continuous, with corresponding subdifferential formula, is closely related to the setup required to establish that problem \eqref{eq:KKTR} is closely related to \eqref{eq:P}; cf. Theorem \ref{DempeDutta-Stuff}. However, the conditions in Theorem \ref{Lipschitz continuity of phi} are far much weaker than the ones needed in Theorem \ref{Differentiability of phi}. In fact, it is common in the literature to analyse \eqref{eq:LLVFR} as a Lipschitz optimization problem. Hence, to make sure that our comparisons are fair, from here on, we will treat \eqref{eq:LLVFR} as such, while the framework ensuring that \eqref{eq:KKTR} is a continuously differentiable optimization problem will be considered.

To close this subsection, we summarize the main points about problems \eqref{eq:KKTR} and \eqref{eq:LLVFR}, and their distinctive features, in the following table (Table \ref{Table 1}). Note that there are many other types of conditions ensuring that $\varphi$ is locally Lipschitz continuous and leading to estimates of its subdifferential; see the aforementioned references.
\begin{table}[!th]
\begin{center}
\addtolength{\leftskip} {-2cm} 
\addtolength{\rightskip}{-2cm}
\begin{tabular}{p{35mm}|l|l|l}
\hline
\multicolumn{2}{c|}{${}$}   & \textbf{KKTR}&    \textbf{LLVFR}      \\
\hline
\multirow{3}{*}{\textbf{Model} \textbf{requirements}} & $f$ and $g$ are $\mathcal{C}^1$ w.r.t. $y$      & \cmark   &         \xmark     \\
                                              & lower-level convexity              & \cmark     &         \xmark     \\
                                              & lower-level regularity             & \cmark    &         \xmark     \\
\hline
\multirow{1}{*}{\textbf{Relationship to } \eqref{eq:P}}& locally/globally equivalent       & \textbf{?}   &         \cmark     \\
\hline
&                 &  $F$ and $G$ are $\mathcal{C}^1$   &      $F$ and $G$ are $\mathcal{C}^1$     \\
&                 & $f$ and $g$ are $\mathcal{C}^2$ w.r.t. $y$   &    $f$ and $g$ are $\mathcal{C}^2$      \\
\textbf{Reformulation's}&  smooth problem &  $g$ is $\mathcal{C}^1$           &  LSOSC    \\
 \textbf{nature}&                 &              &    LLICQ     \\
&                 &              &   LSCC   \\\cline{2-4}
& \multirow{2}{*}{Lipschitz continuous} &   \multirow{2}{*}{---}         &    $f$ and $g$ convex    \\
&                      &             &   or MFCQ + $S$ isc    \\
\hline
\multirow{2}{*}{\textbf{Problem size}} & number of variables      & $n+m+q$      &           $n+m$    \\
                              & number of constraints    & $p + 2q + m + 1$       &       $p+q+1$     \\
\hline
\end{tabular}
\end{center}
\caption{Summary for requirements needed to derive KKT and LLVF reformulations; links to original problem; main characteristics of the reformulations and snapshot of requirements for reformulations to be continuously differentiable or locally Lipschitz continuous problems. $isc$ stands for \emph{inner semicontinuity}.}\label{Table 1} 
\end{table}

\subsection{Framework for optimality conditions and numerical comparison}\label{Framework for optimality conditions and numerical comparison}
In this subsection, we present the general framework that will be used to solve \eqref{eq:KKTR} and \eqref{eq:LLVFR}; going from the corresponding optimality conditions to the Newton-type method to solve the problem. To proceed, let $\tilde{n}, \tilde{p}, \tilde{q}\in \mathbb{N}$ be given such that $\tilde{n}, \tilde{p}, \tilde{q}\geq 1$ and consider the optimization problem
\begin{equation}\label{Basic problem}
  \min~\tilde{f}(x)\;\;\mbox{ s.t. }\;\; \gamma(x) = 0, \;\; \tilde{g}(x)\leq 0,\;\; \tilde{h}(x)=0,
\end{equation}
where the functions $\tilde{f}:\mathbb{R}^{\tilde{n}} \rightarrow \mathbb{R}$, $\tilde{g}:\mathbb{R}^{\tilde{n}} \rightarrow \mathbb{R}^{\tilde{p}}$, and $\tilde{h}:\mathbb{R}^{\tilde{n}} \rightarrow \mathbb{R}^{\tilde{q}}$ are continuously differentiable and $\gamma:\mathbb{R}^{\tilde{n}} \rightarrow \mathbb{R}$ is locally Lipschitz continuous. The setup of problem \eqref{Basic problem} is such that the presence of the constraint $\gamma(x)=0$ can potentially lead to the failure of standard constraint qualifications. Hence, our aim is to remove this from the feasible, in order to get a more tractable feasible set for the problem. To proceed, we use the concept of partial calmness \cite{YeZhuOptCondForBilevel1995,YeZhuZhuExactPenalization1997}  defined as follows.
\begin{defn}[partial calmness condition]\label{partial-calmness}  Let $\bar x$ be a local optimal solution of problem \eqref{Basic problem}. Problem \eqref{Basic problem} is said to be partially calm on $\gamma$ at $\bar x$ provided that there exist $\delta>0$ and $\lambda>0$ such that for all $\sigma\in U(0,\delta)$ and all $x\in U(\bar x, \delta)$ with
$$
\gamma(x)+\sigma= 0,  \ \   \tilde{g}(x)\leq 0,\;\; \tilde{h}(x)=0,
$$
we have
$$
\tilde{f} (x) - \tilde{f}(\bar x)+\lambda |\sigma|\geq 0,
$$
where $U(z,\delta)$ is a neighbourhood of $z$ with radius $\delta$, i.e., $U(z,\delta):=\{x\in\mathbb{R}^n~|~\|x-z\|<\delta\}$.
\end{defn}
Based on this, the following partial penalization from \cite[Proposition 2.2]{YeZhuZhuExactPenalization1997} can be used to move constraint $\gamma(x)=0$ from the feasible set to the objective function to get a tractable feasible set.
\begin{thm}[partial exact penalization]\label{equivalen}  Let $\bar x$ is a local minimizer of problem \eqref{Basic problem}. Then  this problem is partially calm on $\gamma$ at $\bar x$ if and only if there exists $\lambda \in (0, \infty)$ such that $\bar x$ is also a local optimal solution of the following problem:
\begin{eqnarray}\label{partial-penalty-Pc}
\begin{array}{l}
  \underset{x}\min~\tilde{f}(x)+\lambda |\gamma(x)| \;\;   \mbox{ s.t. } \;\; \tilde{g}(x)\leq 0,\;\; \tilde{h}(x)=0.
\end{array}
\end{eqnarray}
\end{thm}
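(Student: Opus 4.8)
The plan is to prove both implications directly from the definitions by means of the substitution $\sigma := -\gamma(x)$, which links the perturbation parameter in Definition \ref{partial-calmness} to the penalty term $\lambda|\gamma(x)|$ in \eqref{partial-penalty-Pc}. The key structural facts I would exploit are that $\bar x$ is feasible for \eqref{Basic problem}, so $\gamma(\bar x)=0$ and the penalty vanishes at $\bar x$, and that with $\sigma=-\gamma(x)$ the constraint $\gamma(x)+\sigma=0$ holds automatically while $|\sigma|=|\gamma(x)|$. Everything then reduces to matching neighborhoods.

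For the implication "$\bar x$ is a local minimizer of \eqref{partial-penalty-Pc} $\Rightarrow$ partial calmness", I would argue as follows. Suppose there are $\lambda>0$ and $\delta_0>0$ with $\tilde f(x)+\lambda|\gamma(x)|\ge \tilde f(\bar x)$ for every $x\in U(\bar x,\delta_0)$ satisfying $\tilde g(x)\le 0$ and $\tilde h(x)=0$, where I have used $\gamma(\bar x)=0$ to drop the penalty at $\bar x$. Now take any $\delta\in(0,\delta_0]$, any $\sigma\in U(0,\delta)$, and any $x\in U(\bar x,\delta)$ with $\gamma(x)+\sigma=0$, $\tilde g(x)\le 0$, $\tilde h(x)=0$. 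Then $|\gamma(x)|=|\sigma|$, and the displayed inequality gives $\tilde f(x)+\lambda|\sigma|\ge \tilde f(\bar x)$, i.e. $\tilde f(x)-\tilde f(\bar x)+\lambda|\sigma|\ge 0$, which is precisely the partial calmness inequality. This direction is essentially a one-line substitution and requires no regularity of $\gamma$ whatsoever.

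For the converse, assume partial calmness holds with constants $\delta>0$ and $\lambda>0$. Given an $x$ near $\bar x$ with $\tilde g(x)\le 0$ and $\tilde h(x)=0$, I would set $\sigma:=-\gamma(x)$, so that $\gamma(x)+\sigma=0$ holds and the pair $(x,\sigma)$ is admissible in Definition \ref{partial-calmness} provided $\sigma\in U(0,\delta)$ and $x\in U(\bar x,\delta)$. Partial calmness then yields $\tilde f(x)-\tilde f(\bar x)+\lambda|\gamma(x)|\ge 0$, which rearranges to $\tilde f(x)+\lambda|\gamma(x)|\ge \tilde f(\bar x)=\tilde f(\bar x)+\lambda|\gamma(\bar x)|$, the desired local optimality of $\bar x$ for \eqref{partial-penalty-Pc}.

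The one point requiring care, and the only genuine obstacle, is the neighborhood bookkeeping in this converse direction: to guarantee $\sigma=-\gamma(x)\in U(0,\delta)$ I need $|\gamma(x)|<\delta$, and this is exactly where the local Lipschitz continuity of $\gamma$ enters. Writing $L$ for a Lipschitz modulus of $\gamma$ near $\bar x$ and using $\gamma(\bar x)=0$, I have $|\gamma(x)|=|\gamma(x)-\gamma(\bar x)|\le L\,\|x-\bar x\|$, so restricting $x$ to $U(\bar x,\delta')$ with $\delta':=\delta/(L+1)$ forces $|\sigma|\le L\delta'<\delta$ while keeping $\delta'\le\delta$, so that $(x,\sigma)$ is admissible. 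Consequently $\bar x$ minimizes the penalized objective over $U(\bar x,\delta')$ subject to $\tilde g\le 0$, $\tilde h=0$, which completes the proof.
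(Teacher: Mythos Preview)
Your argument is correct. The paper does not supply its own proof of this theorem; it simply quotes the result as \cite[Proposition~2.2]{YeZhuZhuExactPenalization1997}, so there is no in-paper proof to compare against. Your substitution $\sigma=-\gamma(x)$ together with $\gamma(\bar x)=0$ is exactly the standard mechanism behind this equivalence, and your neighborhood bookkeeping in the converse direction is handled cleanly. One minor remark: in that direction you only need $|\gamma(x)|<\delta$ for $x$ close to $\bar x$, which follows already from continuity of $\gamma$ at $\bar x$ (since $\gamma(\bar x)=0$); the local Lipschitz modulus gives a convenient explicit radius $\delta'=\delta/(L+1)$ but is not strictly required for this step.
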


To derive the optimality conditions for problem \eqref{partial-penalty-Pc}, generalized differentiation tools will be needed, considering the potential nonsmoothness of $\gamma$. For a function $\psi : \mathbb{R}^n \rightarrow \mathbb{R}$, its directional derivative at $\bar x \in \mathbb{R}^n$, in direction $d \in \mathbb{R}^n$, is the following limit when it exists:
\begin{equation}\label{Directional Derivative}
    \psi'(\bar x; d):=\underset{t\downarrow 0}\lim \frac{1}{t}\left[\psi(\bar x + td)-\psi(x)\right].
\end{equation}
 Differentiable and convex (not necessarily differentiable) functions are examples of directionally differentiable functions \cite{RockafellarConvexAnalBook1970}. The optimal value function $\varphi$ \eqref{varphi} can well be directionally differentiable without necessarily being differentiable nor convex \cite{GauvinDubeau1982}.

Proceeding further, recall that the definition of the usual directional derivative \eqref{Directional Derivative} relies on the existence of a limit, which in fact does not exist for various classes of functions. To extend the concept of directional derivative to a wider class of function, Clarke \cite{ClarkeBook1983} introduced the notion of  {generalized directional derivative}, defined for a function $\psi : \mathbb{R}^n \rightarrow \mathbb{R}$ by
 \begin{equation}\label{Clarke Directional Derivative}
    \psi^o(\bar x; d):=\underset{t\downarrow 0}{\underset{x \rightarrow \bar x}\limsup} \frac{1}{t}\left[\psi(x + td)-\psi(x)\right].
\end{equation}
 This quantity exists if $\psi$ is any function Lipschtz continuous around $\bar x$ \cite[Proposition 2.1.1]{ClarkeBook1983}. Utilizing this notion, Clarke also introduced the  {generalized subdifferential}
\begin{equation}\label{Clarke Subdifferential}
    \partial \psi(\bar x):= \left\{\xi\in \mathbb{R}^n|\; \psi^o(\bar x; d)\geq \langle\xi, d\rangle, \; \forall d\in \mathbb{R}^n  \right\}.
\end{equation}
$\partial \psi(\bar x) = \left\{\nabla \psi(\bar x)\right\}$ if $\psi$ is differentiable at $\bar x$. Also, if $\psi$ is convex, $\partial \psi$ coincides with the subdifferential in sense of convex analysis, which can be defined in way similar to \eqref{Clarke Subdifferential} while instead using \eqref{Directional Derivative}.
Furthermore, note that $\psi$ being Lipschitz continuous around $\bar x$, it is differentiable almost everywhere
around this point; hence the subdiffential \eqref{Clarke Subdifferential} can also be written as
\begin{equation}\label{Clarke Subdifferential-Derivative}
    \partial \psi(\bar x):= \mbox{co}~\left\{\left.\lim~\nabla \psi(x^n)\,\right|\; x^n \rightarrow \bar x, \;\, x^n\in D_\psi  \right\},
\end{equation}
{ {where ``co'' stands for the convex hull and  $D_\psi$ represents the set of points where $\psi$ is differentiable \cite{ClarkeBook1983}.}} The latter concept remains valid for a vector-valued function and is called the generalized Jacobian, with $\nabla \psi$ in \eqref{Clarke Subdifferential-Derivative} denoting the Jacobian of $\psi$ at points where the function is differentiable. Following the expression in \eqref{Clarke Subdifferential-Derivative}, the $B$-subdifferential (see, e.g., \cite{Qi1993convergence}) can be defined by
\begin{equation}\label{B-Subdifferential}
    \partial_B \psi(\bar x):= \left\{\left.\lim~\nabla \psi(x^n)\,\right|\; x^n \rightarrow \bar x, \;\, x^n\in D_\psi  \right\}.
\end{equation}

We are now ready to derive the necessary optimality conditions for problem \eqref{partial-penalty-Pc}. To proceed, recall that a point $\bar x$ feasible to problem \eqref{partial-penalty-Pc} satisfies the MFCQ if the gradients
\begin{equation}\label{MFCQ general}
  \nabla \tilde{h}_i(\bar x), \;\, i=1, \ldots, \tilde{q}  \;\; \mbox{ are linearly independent}
\end{equation}
and there exists a vector $d\in \mathbb{R}^{\tilde{n}}$ such that
\begin{equation}\label{MFCQ general II}
 \begin{array}{l}
   \nabla \tilde{h}_i(\bar x)^\top d=0, \;\, i=1, \ldots, \tilde{q}, \\
   \nabla \tilde{g}_i(\bar x)^\top d < 0, \;\, i\in I(\bar x):=\left\{i|\;\,  \tilde{g}_i(\bar x)=0\right\}.
 \end{array}
\end{equation}
\begin{thm}[necessary optimality conditions] Let $x$ be a local optimal solution of problem \eqref{Basic problem}. Assume that the problem is partially calm at $\bar x$, $\gamma$ is nonnegative and locally Lipschitz continuous around $x$ and the MFCQ holds for the remaining constraints. Then, there exist some $\lambda>0$,  $u$, and  $v$ such that
\begin{eqnarray}
  \nabla \tilde{f}(x) + \lambda \partial \gamma(x) + \nabla \tilde{g}(x)^\top u + \nabla \tilde{h}(x)^\top v\ni 0, \label{opt1-1}\\
  u\geq 0, \;\; u^\top \tilde{g}(x)=0, \;\; \tilde{g}(x)\leq 0, \;\; \tilde{h}(x)=0.\label{opt1-2}
\end{eqnarray}
\end{thm}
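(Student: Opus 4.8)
The plan is to reduce the constrained problem \eqref{Basic problem}, whose awkward constraint $\gamma(x)=0$ may destroy standard constraint qualifications, to the penalized problem \eqref{partial-penalty-Pc}, and then apply a Lagrange multiplier rule in the locally Lipschitz setting. First I would invoke partial exact penalization: since $\bar x$ is a local minimizer of \eqref{Basic problem} and the problem is partially calm on $\gamma$ at $\bar x$, Theorem \ref{equivalen} delivers some $\lambda\in(0,\infty)$ for which $\bar x$ is also a local minimizer of \eqref{partial-penalty-Pc}. The decisive simplification comes from the nonnegativity hypothesis on $\gamma$: feasibility of $\bar x$ for \eqref{Basic problem} forces $\gamma(\bar x)=0$, and since $\gamma\geq 0$ throughout a neighbourhood of $\bar x$, we have $|\gamma(x)|=\gamma(x)$ there. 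Hence the penalized objective reduces to $\phi:=\tilde f+\lambda\gamma$, which is locally Lipschitz continuous as the sum of the $\mathcal{C}^1$ function $\tilde f$ and the locally Lipschitz function $\lambda\gamma$, and carries no spurious kink from the absolute value.

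Next I would apply the Lagrange multiplier rule for Lipschitz optimization from Clarke's calculus \cite{ClarkeBook1983} to the reduced problem $\min\,\phi(x)$ subject to $\tilde g(x)\leq 0$, $\tilde h(x)=0$. The constraint functions are $\mathcal{C}^1$ and the MFCQ \eqref{MFCQ general}--\eqref{MFCQ general II} holds at $\bar x$; this constraint qualification excludes the degenerate Fritz--John situation in which the multiplier on the objective vanishes, so after normalizing that multiplier to $1$ one obtains $u\geq 0$ and $v$ with
$$
0\in \partial\phi(\bar x)+\nabla\tilde g(\bar x)^\top u+\nabla\tilde h(\bar x)^\top v,
$$
together with the feasibility and complementarity relations $u^\top\tilde g(\bar x)=0$, $\tilde g(\bar x)\leq 0$ and $\tilde h(\bar x)=0$, which are precisely \eqref{opt1-2}.

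Finally I would expand $\partial\phi(\bar x)$ by the Clarke subdifferential sum rule. Because $\tilde f$ is continuously differentiable, the sum rule holds with equality, so $\partial\phi(\bar x)=\nabla\tilde f(\bar x)+\lambda\,\partial\gamma(\bar x)$; substituting this into the inclusion above yields exactly \eqref{opt1-1}. The main obstacle is the middle step: one must verify that the MFCQ \eqref{MFCQ general}--\eqref{MFCQ general II} functions as a genuine constraint qualification in the nonsmooth setting, i.e.\ that it guarantees a \emph{normal} (nonzero objective) multiplier rather than merely a Fritz--John condition. Once this is secured through the nonsmooth multiplier rule, the surrounding manipulations, namely removing the absolute value via $\gamma\geq 0$ and applying the exact sum rule via $\tilde f\in\mathcal{C}^1$, are routine.
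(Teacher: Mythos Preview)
Your proposal is correct and follows exactly the route the paper intends. The paper does not spell out a proof for this general statement, but its proof of the specialized Theorem~\ref{Necessary conditions re-eq-P} proceeds identically: use partial calmness (Theorem~\ref{equivalen}) to pass to the penalized problem, then apply the standard Lagrange multiplier rule under the MFCQ; your additional remarks on dropping the absolute value via $\gamma\geq 0$ and on the exact sum rule for $\partial(\tilde f+\lambda\gamma)$ simply make explicit what the paper takes for granted.
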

Assuming that there exists a function $\vartheta$ such that any $\xi \in \partial \gamma(x)$ can be written as  $\xi= \vartheta(x, w)$, for some $w$,  the conditions \eqref{opt1-1}--\eqref{opt1-2} can be relaxed to a certain system of equations of the form
\begin{equation}\label{Eq-Main}
  \Phi^{\lambda}(x, u, v, w)=0.
\end{equation}
More details on the nature of $\Phi^{\lambda}$ will be clear for each specific reformulation of the bilevel program in the next section. For the reminder of this section, we assume that $\Phi^{\lambda}$ is a semismooth function \cite{Mifflin1977}, which is useful for the convergence result of the Newton method to be discussed in this paper. A locally Lipschitz continuous function  $\psi : \mathbb{R}^{n} \rightarrow \mathbb{R}^{m}$  is semismooth at $\bar x$ if the following limit exists for all $d\in \mathbb{R}^n$:
$$
\lim \left\{Vd'\,|\; V\in \partial \psi (\bar x+ td'), \; d' \rightarrow d, \; t\downarrow 0\right\}.
$$
 If in addition,
$
Vd - \psi'(\bar x; d) = O(\|d\|^2)
$
for all $V\in \partial \psi (\bar x+d)$ with $d \rightarrow 0$, then $\psi$ is said to be strongly semismooth at $\bar x$. $\psi$ is SC$^1$ if it is continuously differentiable and $\nabla \psi$ is semismooth. Also, $\psi$ will be LC$^2$ if $\psi$ is twice continuously differentiable and $\nabla^2 \psi$ is locally Lipschitzian.

If the function $\Phi^\lambda$ is semismooth, then a generalized Newton-type method can be used to solve equation \eqref{Eq-Main}.
One of our main goals in this paper is to solve this system of equations for the corresponding bilevel programs. We will show that \eqref{Eq-Main} when specified for our problems of interest is a square system. This therefore allows for a natural extension of standard versions of the semismooth Newton method (see, e.g., \cite{DeLuca1996, FischerASpecial1992, QiJiangSemismooth1997, QiSun1999, QiSunANonsmoothVersion1993}) to the bilevel optimization setting.
In order to take full advantage of the structure of the function $\Phi^{\lambda}$ \eqref{Eq-Main} for our problems \eqref{eq:KKTR} and \eqref{eq:LLVFR}, we will use the following globalized version of the semismooth Newton method developed by De Luca et al. \cite{DeLuca1996}.
To proceed, we also introduce the merit function
\begin{equation}\label{Merit function}
    \Psi^{\lambda}(\zeta) := \frac{1}{2}\left\|\Phi^\lambda(\zeta)\right\|^2
\end{equation}
of equation \eqref{Eq-Main}, that we assume to be differentiable, as mild assumptions will ensure this for problems \eqref{eq:KKTR} and \eqref{eq:LLVFR}. Hence, permitting the global convergence of this algorithm.

\begin{algorithm}
\caption{Semismooth Newton method for equation  \eqref{Eq-Main}}
\label{algorithm 1}
\begin{algorithmic}
 \STATE \textbf{Step 0}: Choose $\lambda>0$, $\beta>0$, $\epsilon \geq 0$, $\rho\in(0,1)$, $\sigma\in(0,1/2)$, $t>2$, $\zeta^o$ and set $k:=0$.
 \STATE \textbf{Step 1}: If $\left\|\Phi^{\lambda} (\zeta ^k)\right\|\leq \epsilon$, then stop.
 \STATE \textbf{Step 2}: Choose $W^k\in \partial \Phi^{\lambda}(\zeta ^k)$ and find the solution $d^k$ of the system
  $$W^kd^k=-\Phi^{\lambda}(\zeta ^k).$$
\hspace{1.25cm}If this equation is not solvable or if the condition
  $$  \nabla\Psi^{\lambda}(\zeta ^k)^\top  d^k  \leq - \beta \Vert d^k\Vert^t$$
\hspace{1.25cm}is not satisfied, set $d^k=-\nabla\Psi^{\lambda}(\zeta ^k)$.
 \STATE \textbf{Step 3}: Find the smallest nonnegative integer $s_k$ such that
$$
\Psi^{\lambda}(\zeta^k + \rho^{s_k}d^k)   \; \leq \;  \Psi^{\lambda}(\zeta^k) +2\sigma\rho^{s_k}\nabla\Psi^{\lambda}(\zeta ^k)^\top    d^k.
$$
\hspace{1.25cm} Then set $\alpha_k := \rho^{s_k},\; \zeta ^{k+1} := \zeta ^k + \alpha_k d^k$, $k:=k+1$ and go to \textbf{Step 1}.
\end{algorithmic}
\end{algorithm}

Note that the only difference between this algorithm and the original one in \cite{DeLuca1996} is that in Step 0, we also have to provide the partial penalization parameter $\lambda$; cf. \eqref{partial-penalty-Pc}. Also recall that in Step 2, $\partial \Phi^{\lambda}$ denotes the Clarke subdifferential \eqref{B-Subdifferential}. Obviously, equation $W^k d =-\Phi^{\lambda}(\zeta^k)$ has a solution if the matrix $W^k$ is nonsingular. The latter holds in particular if the function $\Phi^{\lambda}$ is \emph{CD-regular}. The function $\Phi^{\lambda}$ is said to be CD-regular at a point $\zeta$ if each element from $\partial\Phi^{\lambda}(\zeta)$ is nonsingular. 
Using this property, the convergence of Algorithm \ref{algorithm 1} can be established as follows \cite{DeLuca1996,QiJiangSemismooth1997}:
\begin{thm}\label{convergence result}  Suppose that the functions involved in problem \eqref{Basic problem} are SC$^1$  and let  $\bar \zeta:=(\bar x, \bar u, \bar v)$ be an accumulation point of a sequence generated by Algorithm $\ref{algorithm 1}$ for some parameter $\lambda > 0$. Then $\bar \zeta$ is a stationary point of the problem of minimizing $\Psi^\lambda$, i.e., $\nabla \Psi^\lambda(\bar \zeta)=0$. If $\bar \zeta$ solves $\Phi^\lambda (\zeta)=0$ and the function $\Phi^\lambda$ is CD-regular at $\bar \zeta$, then the algorithm converges to  $\bar \zeta$ superlinearly and quadratically if the functions involved in problem \eqref{Basic problem}  are LC$^2$.
\end{thm}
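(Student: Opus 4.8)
The plan is to read this as the specialization to equation \eqref{Eq-Main} of the global-then-local convergence theory for the globalized semismooth Newton method of \cite{DeLuca1996, QiJiangSemismooth1997}, verifying that the structural hypotheses of those results hold in the present setting and supplying the key estimates. First I would record the consequences of the standing regularity: the SC$^1$ assumption on the data of \eqref{Basic problem}, together with the already-assumed semismoothness of $\Phi^\lambda$, guarantees that $\Phi^\lambda$ is continuous and locally Lipschitz and that the merit function $\Psi^\lambda$ in \eqref{Merit function} is continuously differentiable with $\nabla\Psi^\lambda(\zeta)=W^\top\Phi^\lambda(\zeta)$ for any $W\in\partial\Phi^\lambda(\zeta)$. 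This is exactly the regularity that makes the line search in Step 3 well defined and the gradient test in Step 2 meaningful.

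For the global statement I would run the classical Armijo line-search argument. By construction the search direction always produces descent: in the Newton case it satisfies $\nabla\Psi^\lambda(\zeta^k)^\top d^k\le-\beta\|d^k\|^t$, while in the fallback case $d^k=-\nabla\Psi^\lambda(\zeta^k)$ gives $\nabla\Psi^\lambda(\zeta^k)^\top d^k=-\|\nabla\Psi^\lambda(\zeta^k)\|^2$. Hence $\{\Psi^\lambda(\zeta^k)\}$ is nonincreasing, and a Zoutendijk-type summation together with the continuity of $\nabla\Psi^\lambda$ forces $\nabla\Psi^\lambda(\bar\zeta)=0$ at every accumulation point $\bar\zeta$. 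The only care needed is to treat the two direction-selection cases uniformly, which is handled by noting that in both cases the accepted step drives the relevant descent quantity to zero along the convergent subsequence.

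For the local rate, suppose $\bar\zeta$ solves $\Phi^\lambda(\zeta)=0$ and $\Phi^\lambda$ is CD-regular there. By upper semicontinuity of $\partial\Phi^\lambda$, CD-regularity persists on a neighbourhood of $\bar\zeta$ with a uniform bound $\|(W^k)^{-1}\|\le C$, so the Newton system is solvable with $\|d^k\|\le C\|\Phi^\lambda(\zeta^k)\|$. Writing $\zeta^k+d^k-\bar\zeta=(W^k)^{-1}[W^k(\zeta^k-\bar\zeta)-\Phi^\lambda(\zeta^k)+\Phi^\lambda(\bar\zeta)]$ and invoking semismoothness yields $\|\zeta^k+d^k-\bar\zeta\|=o(\|\zeta^k-\bar\zeta\|)$, upgraded to $O(\|\zeta^k-\bar\zeta\|^2)$ under the LC$^2$ (strong semismoothness) assumption.

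The main obstacle — and the step I expect to require the most care — is the transition from the global to the local regime, namely showing that near $\bar\zeta$ the algorithm selects the Newton direction and accepts the unit step $\alpha_k=1$. Here the choice $t>2$ is essential: since $\nabla\Psi^\lambda(\zeta^k)^\top d^k=-\|\Phi^\lambda(\zeta^k)\|^2$ while $\|d^k\|^t\le C^t\|\Phi^\lambda(\zeta^k)\|^t=o(\|\Phi^\lambda(\zeta^k)\|^2)$ as $\zeta^k\to\bar\zeta$, the descent test $\nabla\Psi^\lambda(\zeta^k)^\top d^k\le-\beta\|d^k\|^t$ is eventually satisfied and the Newton step is never rejected near the solution. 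Combining the superlinear estimate above with a first-order expansion of $\Psi^\lambda$ and the condition $\sigma<1/2$ then shows that the Armijo test in Step 3 holds with $s_k=0$, so full Newton steps are taken; the fast local rate is thereby inherited by the globalized iteration, and the accumulation point becomes a genuine limit with the stated superlinear (resp. quadratic) speed.
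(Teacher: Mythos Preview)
Your proposal is correct and follows exactly the approach of the cited references \cite{DeLuca1996,QiJiangSemismooth1997}; the paper itself does not supply an independent proof of this theorem but simply invokes those works. Your sketch faithfully reproduces the global Armijo argument and the local transition to full Newton steps (via $t>2$ and $\sigma<1/2$) on which those references rely, so there is nothing to add.
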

{ {Observe that the CD-regularity in this theorem can be replaced by the weaker \emph{BD-regularity}, referring to the nonsingularity of all matrices from   $\partial_B \Phi^{\lambda}(\bar \zeta)$. In this case, inclusion $W^k\in \partial \Phi^{\lambda}(\zeta ^k)$ appearing in Algorithm \ref{algorithm 1} would just need to be replaced by $W^k\in \partial_B \Phi^{\lambda}(\zeta ^k)$.}}

\section{Necessary conditions for optimality}\label{Necessary conditions for optimality}
Here, we implement the optimality conditions aspect of the previous section on \eqref{eq:KKTR} and \eqref{eq:LLVFR}. We start with the relevant qualification conditions in the next subsection and subsequently, we apply them to derive necessary optimality conditions.
\subsection{Qualification conditions}\label{Qualification conditions}
First considering \eqref{eq:KKTR}, it is well-known that the standard MFCQ \eqref{MFCQ general}--\eqref{MFCQ general II} fails at any feasible point; see \cite{FlegelKanzowOutrata2007, YeZhuZhuExactPenalization1997}. However, reformulating the feasible set of the problem can lead to a tractable MFCQ-type constraint qualification (CQ) known as MPEC-MFCQ that can help generate optimality conditions; cf. \cite{DempeZemkohoOnTheKKTRef, FlegelKanzowOutrata2007, YeNecessary2005}. Many other specifically tailored CQs (e.g., MPEC-LICQ, MPEC-Abadie CQ, and MPEC-Guignard CQ) have been proposed and analysed in the literature; see, e.g., \cite{FlegelKanzowOutrata2007,YeNecessary2005} and references therein.
In this paper, we are not going to follow any of these standard approaches to derive necessary optimality conditions for \eqref{eq:KKTR}. Instead, we will use the penalization approach introduced in Subsection \ref{Framework for optimality conditions and numerical comparison}, as it is conducive to a sensible comparison of \eqref{eq:KKTR} and \eqref{eq:LLVFR}. To see why, we focus our attention next on tools to derive necessary optimality conditions for \eqref{eq:LLVFR}.

Considering \eqref{eq:LLVFR} as a Lipschitz optimization problem (cf. Theorem \ref{Lipschitz continuity of phi}), it is also well-known that the corresponding extension of the MFCQ systematically fails \cite{DempeZemkohoGenMFCQ,YeZhuOptCondForBilevel1995}. Similarly to \eqref{eq:KKTR}, \eqref{eq:LLVFR} is closely related to another important class of optimization problem; namely, the generalized semi-infinite programming problem (GSIP). In fact, problem \eqref{eq:LLVFR} is equivalent to the following special class of GSIP:
\begin{equation}\label{GSIP}
\begin{array}{lll}
\hspace{2cm} &  \underset{x,\,y}\min & F(x,y) \\
             &  \mbox{ s.t. }        & G(x,y)\leq 0,  \ \   g(x,y)\leq 0,\\
             &                       & f(x,y)-f(x, z)\leq 0,\;\; \forall z:\; g(x,z)\leq 0.
\end{array}
\end{equation}
The version of the MFCQ tailored to this class of problem is called the \emph{extended Mangasarian-Fromowitz constraint qualification} (EMFCQ) \cite{JongenRueckmannStein1998} and will be said to hold at a feasible point $(\bar x, \bar y)$ of problem \eqref{GSIP} if there exists a vector $d:=(d^1, d^2)\in \mathbb{R}^n\times \mathbb{R}^m$ such that
\begin{equation}\label{GMFCQ-GSIP}
\begin{array}{l}
 \nabla G_i(\bar x, \bar y)^\top d < 0, \;\; i\in I^1,\\
 \nabla g_j(\bar x, \bar y)^\top d < 0, \;\; j\in I^2,\\
 \nabla\ell^o(\bar x, \bar y, \bar z, v)^\top d < 0, \;\; \forall \bar z\in S(\bar x),\;\; \forall v\in \Lambda^o(\bar x, \bar y, \bar z),
\end{array}
\end{equation}
where $\ell^o(x, y, z, v):= v_o\left[f(x,y) - f(x, z)\right] - \sum_{i\in I^4} v_i g_i(x, z)$ and $\nabla\ell^o$ represents the gradient of the function w.r.t. its first and second variables. Also note that
\[
\Lambda^o(\bar x, \bar y, \bar z):=\left\{v\left|\;v_o\geq 0, \;\; v_i\geq 0, \;\; i\in I^4, \;\; v_o+ \sum_{i\in I^4} v_i =1,\;\; \nabla_3 \ell^o(\bar x, \bar y, \bar z, v)=0\right.\right\}.
\]
Having $(\bar x, \bar y)$ as a feasible point of problem \eqref{eq:LLVFR} implies that we automatically have $\bar y\in S(\bar x)$ and hence,  $\Lambda^o(\bar x, \bar y, \bar y)\neq \emptyset$ by the Fritz-John rule for the lower-level problem \eqref{lower-level problem}. Considering any vector $(u,v)\in \Lambda^o(\bar x, \bar y, \bar y)$, it follows that for $z:=\bar y$, we have
\begin{eqnarray}
 \sum_{i\in I^2} v_i \nabla_1 g_i(\bar x, \bar y)^\top d^1 +   \sum_{i\in I^2} v_i \nabla_2 g_i(\bar x, \bar y)^\top d^2 \leq 0,\label{eq:Moli1}\\
  v_o \nabla_2 f(\bar x, \bar y)^\top d^2 - \sum_{i\in I^2} v_i \nabla_1 g_i(\bar x, \bar y)^\top d^1 < 0, \label{eq:Moli2}\\
v_o \nabla_2 f(\bar x, \bar y)^\top d^2 + \sum_{i\in I^2} v_i \nabla_2 g_i(\bar x, \bar y)^\top d^2 = 0, \label{eq:Moli3}
\end{eqnarray}
where \eqref{eq:Moli1} and \eqref{eq:Moli2} respectively follow from the second and third lines of \eqref{GMFCQ-GSIP}, while \eqref{eq:Moli3} results from the definition of $\Lambda^o(\bar x, \bar y, \bar y)$. Considering \eqref{eq:Moli1} and \eqref{eq:Moli2},
$$
v_o \nabla_2 f(\bar x, \bar y)^\top d^2 < \sum_{i\in I^2} v_i \nabla_1 g_i(\bar x, \bar y)^\top d^1 \leq -  \sum_{i\in I^2} v_i \nabla_2 g_i(\bar x, \bar y)^\top d^2.
$$
This obviously contradicts \eqref{eq:Moli3} and thus confirming that the EMFCQ systematically fails at any feasible point of problem \eqref{eq:LLVFR}. Therefore, unlike for \eqref{eq:KKTR}, there does not seem to be any hope to restore a MFCQ-type CQ for \eqref{eq:LLVFR} via a transformation of its feasible set.

So far, the main qualification condition that has been successfully applied to derive necessary optimality conditions for \eqref{eq:LLVFR} is the partial calmness on $(x,y) \rightarrow f(x,y) - \varphi(x)$ (cf. Definition \ref{partial-calmness}). However, this condition is very restrictive as demonstrated in \cite{MehlitzMichenkoZemkoho}. There are various characterizations of the condition, and they are overviewed in the latter reference.
An interesting perspective of the partial calmness condition for \eqref{eq:LLVFR} is that a qualification condition ensuring that it holds is also a sufficient condition for the partial calmness of problem \eqref{eq:KKTR} on $(x, y, z)  \mapsto z^\top g(x,y)$ to hold \cite{YeZhuZhuExactPenalization1997}. This connection between \eqref{eq:KKTR} and \eqref{eq:LLVFR} is one of the main motivations of the comparison approach adopted in this paper in addition to the fact this framework enables standard-type CQs to be subsequently applied on both problems.

Recall that the concept of partial calmness was introduced in \cite{YeZhuOptCondForBilevel1995} in the context of problem \eqref{eq:LLVFR} and in line with Theorem \ref{equivalen}, it is said to hold on $(x,y) \mapsto f(x,y) - \varphi(x)$ at one of its local optimal solution $(\bar x, \bar y)$ if  this point is also locally optimal for problem
\begin{equation}\label{Penalized-LLVF}
    \underset{x,y}\min~F(x,y) + \lambda \left(f(x,y) -\varphi(x)\right) \;\mbox{ s.t. }\; G(x,y)\leq 0, \; g(x,y)\leq 0,
\end{equation}
for some $\lambda >0$, given that $\gamma(x,y) := f(x,y) - \varphi(x) \geq 0$ for all $(x, y)$ such that $x\in X$ and $g(x,y)\leq 0$.
Similarly, \eqref{eq:KKTR} will be said to be partially calm on $(x, y, z)  \mapsto z^\top g(x,y)$ at a locally optimal point $(\bar x, \bar y, \bar z)$ if there exists a number $\lambda>0$  such that this point also locally solves
\begin{equation}\label{re-eq-P}
  \underset{x,\,y,\,z}\min~F(x,y) -  \lambda  z^\top g(x,y) \; \mbox{ s.t. } \;  G(x,y)\leq 0, \; g(x,y)\leq 0, \; \nabla_2\ell(x,y,z)=0, \; z \geq 0,
\end{equation}
as for any $(x, y, z)$ such that $z \geq 0$ and $g(x,y)\leq 0$, we have $\gamma(x,y,z):=g(x,y)^\top z\leq 0$. 

One thing that is clear by now is that the partial penalization above is not enough to completely develop necessary optimality conditions for our problems \eqref{eq:KKTR} and \eqref{eq:LLVFR}. Hence, we introduce the versions of the MFCQ \eqref{MFCQ general}--\eqref{MFCQ general II} tailored to these problems. The KKT-MFCQ will be said to hold at a feasible point $(x,y,z)$ of problem \eqref{re-eq-P} if the gradients
 \begin{eqnarray}\label{ind-cond}
\nabla(\nabla_{2_i}\ell)(x,y,z),\;\; i=1, \ldots, m \;\; \mbox{ are linearly independent}
\end{eqnarray}
($\nabla_{2_i}\ell$ representing the $i$th component of the derivative of $\ell$ w.r.t. the second variable $y$) and there exist vectors $d^1\in \mathbb{R}^{n+m}$ and $d^2\in \mathbb{R}^q$  such that the following conditions hold:
\begin{eqnarray}\label{kkt-mfcq}
\begin{array}{rl}
\nabla(\nabla_{2_i}\ell)(x,y,z)^\top d^{12}=0, & i=1, \ldots, m,\\
\nabla  G_j(x,y)^\top d^1 <0, & j\in I^1,\\
\nabla g_k(x,y)^\top d^1 <0, & k\in I^2,\\
 d^2_l<0, & l\in I^3.\\
\end{array}
\end{eqnarray}
Looking more closely at the terms in KKT-MFCQ involving the function $\ell$, note that the linear independence condition \eqref{ind-cond} is equivalent to  the full rank condition for the matrix
 \begin{eqnarray}\label{ind-cond-1}
  \nabla(\nabla_{2}\ell)(x,y,z)^\top &=&
 \left[
 \begin{array}{l}
 (\nabla^2_{12}f(x,y))^\top + \sum_{k=1}^q z_k(\nabla^2_{12}g_k(x,y))^\top\\
 \nabla^2_{22}f(x,y) + \sum_{k=1}^q z_k\nabla^2_{22}g_k(x,y)\\
 \nabla_{2}g(x,y)
 \end{array}
 \right].
\end{eqnarray}
Furthermore, based on \eqref{ind-cond-1}, the first condition in \eqref{kkt-mfcq} is equivalent to $\nabla(\nabla_{2}\ell)(x,y,z)d^{12}=0$. Also note that if $f$ and $g$ are all linear in $(x,y)$, then $\nabla(\nabla_{2}\ell)(x,y,z)^\top$ satisfies the full column rank condition if the Jacobian matrix $\nabla_{2}g(x,y)$ has a full column rank. In the latter case, the condition $\nabla(\nabla_{2}\ell)(x,y,z)d^{12}=0$ in \eqref{kkt-mfcq} can be replaced by $\nabla_2 g (x,y)^\top d^2=0$.


Similarly to problem  \eqref{eq:LLVFR},  we consider the LLVF-MFCQ, which will be said to hold at a point $(x,y)$ if there exists a vector  $d\in \mathbb{R}^{n+m}$ such that we have
\begin{equation}\label{MFCQ-UL}
\begin{array}{rl}
\nabla G_j(x,y)^\top d <0, &j\in I^1,\\
\nabla g_k(x,y)^\top d <0, &k\in I^2.
\end{array}
\end{equation}
Clearly, if KKT-MFCQ holds at some point $(x, y, z)$, then LLVF-MFCQ holds at $(x, y)$, which means that the former condition is stronger than the latter one. In the next example, we show that the converse of this implication is not true.
 \begin{exmp}[the KKT-MFCQ can fail while the LLVF-MFCQ is satisfied]\label{ex-1} Consider the example of problem \eqref{eq:P} taken from \cite{LamparielloSagratella2017} with the following data:
$$
F(x,y) := x^2 + (y_1 + y_2)^2, \; G(x,y) := -x + 0.5, \; f(x,y) :=  y_1, \; g(x,y) :=  -\left(x+y_1+y_2 -1, \; y_1, \; y_2\right)^\top.
$$
The optimal solution of the problem is $(\bar x, \bar y)$ with $\bar x=0.5$ and $\bar y=(0,\;0.5)^\top$. It is easy to verify that $\bar z=(0,\;1,\;0)^\top$ is the only point  in $\Lambda(\bar x, \bar y)$. Hence, $(\bar x, \, \bar y, \, \bar z)$ is feasible to  the problem \eqref{re-eq-P} and  $I^1=\{1\}$, $I^2=\{1, \,2\}$, and $I^3=\{1, \, 3\}$. The first and last conditions are equivalent to
\[
d_1^2+d_2^2=0, \;\; d_1^2+d_3^2=0, \;\; d^2_1 < 0, \; \mbox{ and } \; d^2_3 < 0.
\]
This system is obviously infeasible. Thus, demonstrating the KKT-MFCQ does not hold  at $(\bar x, \bar y, \bar z)$. On the other hand, it easy to find a vector $(d_1, d_2, d_3)$ such that we have
$d_1 > 0$, $d_2 > 0$,  and $d_1 + d_2 + d_3 > 0$; confirming that the LLVF-MFCQ holds at the point $(\bar x,\, \bar y)$.
\end{exmp}

{ {To conclude this subsection, we would like to emphasize that problems \eqref{eq:KKTR} and \eqref{eq:LLVFR}, taken from the point of view of standard constrained optimization, they both violate most well-known CQs. However, dealing with the former problem as an MPCC, various reformulations of the feasible set can enable the generic satisfaction of many dual CQs \cite{DempeZemkohoOnTheKKTRef, FlegelKanzowOutrata2007, YeNecessary2005}. This is unfortunately not the case for the latter problem, as even a weaker CQ like the \emph{calmness} of the set-valued mapping
\[
\Psi(\delta):=\left\{(x, y)\in \mathbb{R}^n\times \mathbb{R}^m\left|\, g(x,y)\leq 0, \;\, f(x,y)-\varphi(x)\leq \delta \right.\right\},
\]
which can ensure the fulfilment of the partial calmness condition for \eqref{eq:LLVFR}, is shown in \cite{HenrionSurowiec2011} to systematically fail for important problem classes, while the corresponding CQ for a version of \eqref{eq:KKTR} automatically holds. For the definition of the calmness of $\Psi$ and its ramifications in bilevel optimization, see, e.g., \cite{HenrionSurowiec2011, MehlitzMichenkoZemkoho} and references therein.
}}

\subsection{Optimality conditions}\label{Optimality conditions}
Our main aim here is to provide necessary optimality conditions for the KKT and LLVF reformulations \eqref{eq:KKTR} and \eqref{eq:LLVFR}, respectively, using the corresponding partial exact penalization approaches introduced in the previous subsection. 

\begin{thm}[necessary optimality conditions for \eqref{eq:KKTR}]\label{Necessary conditions re-eq-P} Consider problem \eqref{eq:KKTR} while assuming that $F$ and $G$ (resp. $f$ and $g$) are $\mathcal{C}^1$ (resp. $\mathcal{C}^2$). Suppose that $(x, y, z)$ is a local optimal solution of the problem and let it be partially calm on $(x, y, z) \mapsto z^\top g(x,y)$ at a point $(x, y, z)$, where the KKT-MFCQ is also assumed to hold. Then, there exist $\lambda > 0$, $u\in \mathbb{R}^p$, $(v, w)\in \mathbb{R}^{2q}$, and $s\in\mathbb{R}^{m}$ such that
 \begin{eqnarray}\label{optimality conditions re-eq-P}
 \nabla F(x,y)+ \nabla G(x,y)^{\top}u + \nabla g(x,y)^{\top}(v - \lambda z) + \nabla_{1, 2}(\nabla_2 \ell)(x,y,z)^\top s =0,\label{L-x}\\
\nabla_2 f(x,y) + \nabla_2 g(x,y)^\top z=0, \label{L-eq}\\
- \lambda  g(x,y) + \nabla_2 g(x,y) s + w =0,\label{L-z}\\
u\geq 0, \; G(x,y)\leq 0,\; u^{\top}G(x,y) = 0,\label{CP-u}\\
v\geq 0, \; g(x,y)\leq 0,\; v^{\top} g(x,y) = 0, \label{CP-v}\\
w \leq 0, \; z\geq 0,\; w^{\top} z = 0. \label{CP-w}
\end{eqnarray}
\end{thm}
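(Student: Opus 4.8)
The plan is to apply the general machinery built in Subsection \ref{Framework for optimality conditions and numerical comparison} — specifically Theorem \ref{equivalen} (partial exact penalization) followed by the necessary-optimality-conditions theorem for problem \eqref{Basic problem} — to the concrete penalized problem \eqref{re-eq-P}. The first step is to recognize \eqref{eq:KKTR} as an instance of \eqref{Basic problem} with the troublesome constraint being $\gamma(x,y,z) := z^\top g(x,y)$, the smooth inequality constraints being $G(x,y)\le 0$, $g(x,y)\le 0$, $z\ge 0$ (collected into $\tilde g$), and the smooth equality constraint being the stationarity condition $\nabla_2\ell(x,y,z)=0$ (playing the role of $\tilde h$). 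Since $(x,y,z)$ is assumed locally optimal and partially calm on $\gamma$, Theorem \ref{equivalen} yields a $\lambda>0$ for which $(x,y,z)$ is locally optimal for the penalized problem
\[
  \underset{x,y,z}\min~F(x,y)-\lambda z^\top g(x,y)\;\;\mbox{ s.t. }\;\; G(x,y)\le 0,\; g(x,y)\le 0,\; \nabla_2\ell(x,y,z)=0,\; z\ge 0,
\]
where the sign works out because $\gamma(x,y,z)=z^\top g(x,y)\le 0$ on the feasible set, so $|\gamma|=-\gamma=-z^\top g(x,y)$, and the penalty term enters as $-\lambda z^\top g(x,y)$ as in \eqref{re-eq-P}.

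The second step is to write the KKT stationarity system for this penalized smooth problem. Since $\gamma$ here is actually smooth (it is $z^\top g(x,y)$ with $f,g$ being $\mathcal{C}^2$), the generalized subdifferential $\partial\gamma$ collapses to the ordinary gradient, so I do not even need the full Clarke-subdifferential version — the standard KKT theorem suffices once a constraint qualification holds. The KKT-MFCQ assumed in the hypothesis supplies exactly this qualification: its linear-independence part \eqref{ind-cond} handles the equality constraint $\nabla_2\ell=0$, and its part \eqref{kkt-mfcq} supplies the MFCQ direction for the active inequalities. I would then introduce multipliers $u\in\mathbb{R}^p$ for $G\le 0$, $v\in\mathbb{R}^q$ for $g\le 0$, $w$ (with a sign convention giving $w\le 0$) for $z\ge 0$, and $s\in\mathbb{R}^m$ for the equality $\nabla_2\ell(x,y,z)=0$. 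Differentiating the Lagrangian of \eqref{re-eq-P} blockwise — with respect to $(x,y)$, then with respect to $z$ — produces the three stationarity equations \eqref{L-x}, \eqref{L-eq}, \eqref{L-z}, while the complementarity and feasibility conditions \eqref{CP-u}--\eqref{CP-w} are the standard KKT complementarity relations.

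The main bookkeeping obstacle, and the step I would carry out most carefully, is the $(x,y)$-derivative of the penalty term $-\lambda z^\top g(x,y)$ together with the $(x,y)$-derivative of the equality-constraint term $s^\top \nabla_2\ell(x,y,z)$. The former contributes $-\lambda\,\nabla g(x,y)^\top z$, which combines with the multiplier contribution $\nabla g(x,y)^\top v$ to give the grouped term $\nabla g(x,y)^\top(v-\lambda z)$ in \eqref{L-x}; the latter contributes the second-order term $\nabla_{1,2}(\nabla_2\ell)(x,y,z)^\top s$, which is where the $\mathcal{C}^2$ hypothesis on $f,g$ is essential. For the $z$-derivative, note that $\nabla_z\big(z^\top g(x,y)\big)=g(x,y)$ and $\nabla_z\big(s^\top\nabla_2\ell\big)=\nabla_2 g(x,y)\,s$ (since $\nabla_2\ell = \nabla_2 f + \nabla_2 g^\top z$ is affine in $z$), and adding the multiplier $w$ for $z\ge 0$ yields \eqref{L-z}. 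The only genuine subtlety is tracking signs and transposes consistently across the blocks so that the penalty, the $g$-multiplier, and the equality-multiplier terms land in the exact grouped form stated in \eqref{optimality conditions re-eq-P}; everything else is a direct instantiation of Theorem \ref{equivalen} and the subsequent optimality-conditions theorem.
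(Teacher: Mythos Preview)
Your proposal is correct and follows exactly the same approach as the paper: use partial calmness (Theorem~\ref{equivalen}) to pass from \eqref{eq:KKTR} to the penalized problem \eqref{re-eq-P}, then apply the standard Lagrange multiplier rule under the KKT-MFCQ. The paper's own proof is in fact just two sentences stating precisely this, while you have additionally spelled out the blockwise differentiation and sign bookkeeping that produces \eqref{L-x}--\eqref{CP-w}; this extra detail is fine and accurate.
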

\begin{proof}
Considering $(x, y, z)$ as a local optimal solution of problem \eqref{eq:KKTR}, it follows from the partial calmness assumption that this point is also a local optimal solution for problem \eqref{re-eq-P} for some $\lambda >0$. Applying the standard Lagrange multiplier rule to the latter problem, we have the result under the fulfilment of the KKT-MFCQ at the point $(x, y, z)$.
\end{proof}
Next, we state a relationship between the optimality conditions obtained in this result and the S-stationarity conditions of problem \eqref{eq:KKTR}, known to be the strongest in the context of MPCCs, class of problem that  \eqref{eq:KKTR} belongs to.
\begin{thm}[relationship between the optimality conditions in Theorem \ref{Necessary conditions re-eq-P} and S-stationarity]\label{proposition 1 section 4}
$(\bar x, \bar y, \bar z, \bar s, \bar u, \bar v)$  with $\bar z^\top g(\bar x, \bar y)=0$ satisfies  \eqref{L-x}--\eqref{CP-w} for some $\lambda >0$ if and only if there exist $\tilde{z}\in \mathbb{R}^p$, $\tilde{s}\in \mathbb{R}^m$, $\tilde{u}\in \mathbb{R}^k$, and $\tilde{v}\in \mathbb{R}^p$ such that the following conditions hold:
\begin{eqnarray}
\nabla F(\bar x, \bar y) + \nabla G(\bar x, \bar y)^\top \tilde{u} + \nabla g(\bar x, \bar y)^\top \tilde{v} +  \nabla_{1, 2}(\nabla_2 \ell)(\bar x, \bar y, \tilde{z})^\top \tilde{s} =0, \label{M-1}\\
\nabla_2 f(\bar x, \bar y) + \nabla_2 g(\bar x, \bar y)^\top \tilde{z}=0,\\
\tilde{u} \geq 0, \;\, G(\bar x, \bar y)\leq 0, \; G(\bar x, \bar y)^\top \tilde{u}=0,\label{KM-5}\\
\tilde{z} \geq 0, \; g(\bar x, \bar y)\leq 0, \; \tilde{z}^\top g(\bar x, \bar y)=0, \\
\forall i\in \nu^2: \;\, \nabla_2 g_i(\bar x, \bar y)\tilde{s} =0, \;\, \forall i\in \eta^2: \;\,  \tilde{v}_i =0,\;\, 
\forall i\in \theta^2: \;\, \tilde{v}_i \geq 0 \wedge \sum^m_{l=1}\tilde{s}_l\nabla_{2_l} g_i(\bar x, \bar y) \geq 0. \label{S-0}
\end{eqnarray}
\end{thm}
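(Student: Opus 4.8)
The plan is to prove the equivalence through an explicit, invertible relabelling of the multipliers, after which \eqref{S-0} is settled by a case analysis over the complementarity partition. The natural correspondence is
$$\tilde u = \bar u, \qquad \tilde z = \bar z, \qquad \tilde s = \bar s, \qquad \tilde v = \bar v - \lambda \bar z,$$
supplemented by the multiplier $\bar w = \lambda\, g(\bar x,\bar y) - \nabla_2 g(\bar x,\bar y)\,\bar s$ read off from \eqref{L-z}. Under it, \eqref{L-x} turns into \eqref{M-1}, equation \eqref{L-eq} is the second stationarity identity verbatim, \eqref{CP-u} is \eqref{KM-5}, and the block $\bar z\geq 0,\ g(\bar x,\bar y)\leq 0,\ \bar z^\top g(\bar x,\bar y)=0$ (its last equality being the standing hypothesis) is the fourth S-stationarity block. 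All remaining content therefore lies in reconciling \eqref{L-z}, \eqref{CP-v}, \eqref{CP-w} with \eqref{S-0}. First I would record that, since $\bar z\geq 0$, $g(\bar x,\bar y)\leq 0$ and $\bar z^\top g(\bar x,\bar y)=0$, every index $i\in\{1,\dots,q\}$ lies in exactly one of $\eta^2=\{i:g_i(\bar x,\bar y)<0\}$, $\nu^2=\{i:\bar z_i>0,\ g_i=0\}$, $\theta^2=\{i:\bar z_i=0,\ g_i=0\}$, so this partition is well defined.

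For the forward implication I would run through the three blocks. On $\eta^2$ the strict inequality $g_i(\bar x,\bar y)<0$ forces $\bar v_i=0$ via \eqref{CP-v} and $\bar z_i=0$ via $\bar z^\top g=0$, whence $\tilde v_i=\bar v_i-\lambda\bar z_i=0$. On $\nu^2$ the inequality $\bar z_i>0$ forces $\bar w_i=0$ via \eqref{CP-w}, and because $g_i=0$ there, \eqref{L-z} collapses to $\nabla_2 g_i(\bar x,\bar y)^\top\bar s=0$. On the biactive set $\theta^2$ one has $\tilde v_i=\bar v_i\geq 0$ from the sign constraint in \eqref{CP-v}, while $\bar w_i\leq 0$ from \eqref{CP-w} together with $g_i=0$ turns \eqref{L-z} into $\nabla_2 g_i(\bar x,\bar y)^\top\bar s=-\bar w_i\geq 0$. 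These three conclusions are exactly the clauses of \eqref{S-0}.

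For the converse I would fix $\bar u,\bar z,\bar s$ as above, set $\bar v:=\tilde v+\lambda\bar z$ and $\bar w:=\lambda\,g(\bar x,\bar y)-\nabla_2 g(\bar x,\bar y)\,\bar s$, and observe that \eqref{L-x}, \eqref{L-eq}, \eqref{L-z}, \eqref{CP-u} and the block $\bar z\geq 0$, $g(\bar x,\bar y)\leq 0$, $\bar z^\top g(\bar x,\bar y)=0$ hold by construction. The products $\bar v^\top g(\bar x,\bar y)$ and $\bar w^\top\bar z$ vanish index-by-index because on each block at least one factor is zero (using the $\nu^2$ clause of \eqref{S-0} to kill $\bar w_i$ where $\bar z_i>0$). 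The only conditions with real content are the signs $\bar v\geq 0$ and $\bar w\leq 0$: on $\eta^2$ and $\theta^2$ they follow directly from \eqref{S-0}, whereas on $\nu^2$ one needs $\tilde v_i+\lambda\bar z_i\geq 0$ and on $\eta^2$ one needs $\lambda g_i-\nabla_2 g_i(\bar x,\bar y)^\top\bar s\leq 0$. Since $\bar z_i>0$ on $\nu^2$ and $g_i<0$ on $\eta^2$, both hold once $\lambda$ is large enough, which is legitimate because the S-stationarity system imposes no bound on $\lambda$.

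The main obstacle is precisely this choice of a single $\lambda>0$ in the converse: the given data must be penalized strongly enough to push $\bar v$ nonnegative on $\nu^2$ and $\bar w$ nonpositive on $\eta^2$ at once. I would dispatch it by observing that the troublesome components sit on the strictly active sets, where $\bar z_i>0$ and $g_i<0$ respectively, so a finite threshold suffices; the biactive set $\theta^2$ imposes no lower bound on $\lambda$ exactly because the S-stationarity sign conditions $\tilde v_i\geq 0$ and $\nabla_2 g_i(\bar x,\bar y)^\top\bar s\geq 0$ were tailored to validate the complementarities for every $\lambda$. A secondary point worth flagging is that $\eta^2$ is insensitive to whether it is read off from $\bar z$ or from $\bar v$ (both descriptions reduce to $\{i:g_i(\bar x,\bar y)<0\}$), which keeps the substitution $\tilde v=\bar v-\lambda\bar z$ consistent across every branch.
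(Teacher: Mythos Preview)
Your argument is correct. The paper does not actually give a proof of this result; it simply refers to \cite[Section~3.3]{ZemkohoThesis}. What you have written is precisely the explicit multiplier correspondence one expects: the substitution $\tilde u=\bar u$, $\tilde z=\bar z$, $\tilde s=\bar s$, $\tilde v=\bar v-\lambda\bar z$ turns \eqref{L-x} into \eqref{M-1} verbatim, and the case analysis over $\eta^2\cup\theta^2\cup\nu^2$ (read with respect to the complementarity pair $(\bar z,-g)$, as is standard for the MPCC structure of \eqref{eq:KKTR}) dispatches \eqref{S-0} from \eqref{L-z}, \eqref{CP-v}, \eqref{CP-w} exactly as you describe. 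For the converse you correctly isolate the only nontrivial point: with $\bar v:=\tilde v+\lambda\tilde z$ and $\bar w:=\lambda g(\bar x,\bar y)-\nabla_2 g(\bar x,\bar y)\tilde s$, the sign constraints $\bar v_i\geq 0$ on $\nu^2$ and $\bar w_i\leq 0$ on $\eta^2$ need $\lambda$ large, and since $\min_{i\in\nu^2}\tilde z_i>0$ and $\max_{i\in\eta^2}g_i(\bar x,\bar y)<0$ a single finite threshold suffices, while on $\theta^2$ the S-stationarity signs $\tilde v_i\geq 0$ and $\nabla_2 g_i(\bar x,\bar y)\tilde s\geq 0$ make both constraints hold for every $\lambda>0$. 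Your remark that the description of $\eta^2$ is insensitive to whether one reads it off $\bar z$ or $\bar v$ is also to the point and keeps the partitions consistent across the two systems.
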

\begin{proof}
  See \cite[Section 3.3]{ZemkohoThesis}.
\end{proof}
The conditions in \eqref{M-1}--\eqref{S-0} correspond to the S-type stationary conditions for problem \eqref{eq:KKTR}, in the sense of MPCCs; see \cite{DempeZemkohoOnTheKKTRef}. As the algorithm to be designed in next section to solve this problem will be computing points of the form  \eqref{L-x}--\eqref{CP-w}, the key message from Theorem \ref{proposition 1 section 4} is that for such point a $(\bar x, \bar y, \bar z, \bar s, \bar u, \bar v)$ to be S-stationary, we just need to have $\bar z^\top g(\bar x, \bar y)=0$.

For the necessary optimality conditions for \eqref{eq:LLVFR}, we have the following result from \cite{FischerZemZhou2019}.
\begin{thm}[necessary optimality conditions for \eqref{eq:LLVFR}]\label{KN stationarity partial calm} Consider problem \eqref{eq:LLVFR} while assuming that $F$, $G$, $f$, and $g$ are $\mathcal{C}^1$. Let $(x,y)$ be a local optimal solution of problem \eqref{eq:LLVFR}, where the functions  $f$ and $g_j$ with $j=1, \ldots, q$ are also assumed to be fully convex. Furthermore, suppose  that  \eqref{eq:LLVFR} is partially calm on $(x,y) \mapsto f(x,y)-\varphi(x)$  at $(x,y)$, where the LMFCQ and LLVF-MFCQ are also assumed to hold.
Then there exist $\lambda >0$, $u\in \mathbb{R}^p$, $(v, w)\in \mathbb{R}^{2q}$, and $z \in \mathbb{R}^m$ such that we have
 \begin{eqnarray}\label{optimality conditions 0}
 \nabla F(x,y) + \nabla G(x,y)^{\top}u +\nabla g(x,y)^{\top}v+\lambda \nabla f(x,y) - \lambda \left[\begin{array}{c}
                                                                                                               \nabla_1 \ell(x, z, w)\\
                                                                                                               0
                                                                                                             \end{array}
 \right] =0,\label{KS-1}\\
\nabla_2f(x, z)+\nabla_2 g(x, z)^{\top}w=0,\label{KS-2}\\
u\geq 0, \; G(x, y)\leq 0,\; u^{\top}G(x,y)= 0,\label{VS-3}\\
v\geq 0, \; g(x, y)\leq 0,\; v^{\top} g(x,y)= 0, \label{VS-4}\\
w\geq 0, \; g(x, z)\leq 0,\; w^{\top} g(x, z)= 0. \label{KS-3}
\end{eqnarray}
\end{thm}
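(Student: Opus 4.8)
The plan is to reduce the statement to the abstract necessary optimality conditions \eqref{opt1-1}--\eqref{opt1-2} by viewing \eqref{eq:LLVFR} as an instance of \eqref{Basic problem}, and then to convert the resulting abstract subgradient of the value function into a lower-level KKT pair by means of the estimate \eqref{Subdifferential of phi}. Concretely, I would set $\tilde f := F$, let the penalized constraint be $\gamma(x,y) := f(x,y) - \varphi(x)$, take the remaining smooth inequalities to be $\tilde g := (G, g)$, and have no equality constraints. Since $g(x,y)\le 0$ together with the definition \eqref{varphi} forces $f(x,y)\ge \varphi(x)$ at every feasible point, $\gamma$ is nonnegative on the feasible set, so $|\gamma| = \gamma$ near the minimizer and the nonnegativity hypothesis needed for \eqref{opt1-1}--\eqref{opt1-2} is met.

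First I would record that $y\in S(x)$: feasibility of $(x,y)$ for \eqref{eq:LLVFR} gives $f(x,y)-\varphi(x)\le 0$, while the reverse inequality $f(x,y)\ge\varphi(x)$ just noted yields $f(x,y)=\varphi(x)$, i.e. $y\in S(x)$; this is precisely what licenses the use of \eqref{Subdifferential of phi} at the base point $y$. Next, full convexity of $f$ and the $g_j$ makes $\varphi$ convex and, by Theorem \ref{Lipschitz continuity of phi}(i), locally Lipschitz, so the penalized objective $F+\lambda(f-\varphi)$ is locally Lipschitz. Because $F$ and $f$ are $\mathcal{C}^1$ and $\varphi$ depends on $x$ alone, the Clarke subdifferential of $\gamma$ at $(x,y)$ is $\nabla f(x,y) - \partial\varphi(x)\times\{0\}$, with the zero block occupying the $y$-coordinates.

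I would then invoke the necessary optimality conditions \eqref{opt1-1}--\eqref{opt1-2}, whose hypotheses hold here because partial calmness on $\gamma$ is assumed, $\gamma$ is nonnegative and locally Lipschitz around $(x,y)$, and the MFCQ on the remaining constraints $G\le 0$, $g\le 0$ is exactly the LLVF-MFCQ \eqref{MFCQ-UL}. This produces $\lambda>0$, $u\ge 0$, $v\ge 0$ with $u^\top G(x,y)=0$ and $v^\top g(x,y)=0$ — that is, \eqref{VS-3}--\eqref{VS-4} — together with some $\xi\in\partial\varphi(x)$ satisfying $\nabla F(x,y)+\lambda\nabla f(x,y)-\lambda\left[\begin{array}{c}\xi\\0\end{array}\right]+\nabla G(x,y)^\top u+\nabla g(x,y)^\top v=0$. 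Finally, I would apply the estimate \eqref{Subdifferential of phi}, valid by full convexity together with the LMFCQ \eqref{LMFCQ} at $(x,y)$, to write $\xi=\nabla_1\ell(x,z,w)$ with $z:=y\in S(x)$ and some $w\in\Lambda(x,z)$; unpacking \eqref{Lambda(x,y)} then gives $\nabla_2 f(x,z)+\nabla_2 g(x,z)^\top w=0$ and $w\ge 0$, $g(x,z)\le 0$, $w^\top g(x,z)=0$, which are \eqref{KS-2}--\eqref{KS-3}, while substituting $\xi$ into the displayed stationarity yields \eqref{KS-1}.

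The hard part will be the nonsmooth multiplier step, where full convexity must do double duty: it has to both secure local Lipschitz continuity of $\varphi$ so that the Clarke calculus and \eqref{opt1-1}--\eqref{opt1-2} apply, and supply the precise outer estimate \eqref{Subdifferential of phi} that turns the abstract subgradient $\xi$ into a genuine lower-level KKT pair $(z,w)$. A secondary but easy-to-miss point is the bookkeeping: the single $\lambda$ coming from the partial exact penalization (Theorem \ref{equivalen}) is the same $\lambda$ multiplying $\partial\gamma$ in \eqref{opt1-1}, and the block structure $\left[\begin{array}{c}\xi\\0\end{array}\right]$ must be tracked so that the $y$-component of \eqref{KS-1} receives no contribution from $\partial\varphi$. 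Everything else reduces to the routine substitutions indicated above.
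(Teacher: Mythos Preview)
Your proposal is correct and follows precisely the abstract framework the paper lays out in Subsection~\ref{Framework for optimality conditions and numerical comparison} for exactly this purpose: instantiate \eqref{Basic problem} with $\gamma(x,y)=f(x,y)-\varphi(x)$, apply the nonsmooth multiplier rule \eqref{opt1-1}--\eqref{opt1-2} under partial calmness and LLVF-MFCQ, and then convert the element of $\partial\varphi(x)$ into a lower-level KKT pair via the estimate \eqref{Subdifferential of phi} granted by full convexity and LMFCQ. The paper itself does not give a proof here but cites \cite{FischerZemZhou2019}; your choice $z:=y$ is the natural one and is exactly what the authors use when invoking \eqref{KS-1}--\eqref{KS-3} in Theorem~\ref{relationships between the stationarity concepts}.
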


\begin{rem}
{ {There are important classes of functions that satisfy the full convexity assumption imposed on the lower-level problem in Theorem \ref{KN stationarity partial calm}; cf. \cite{LamparielloSagratella2017Numerically}. However, when it is not possible to guaranty that this assumption is satisfied, it can be replaced by the inner semicontinuity of the lower-level optimal solution set-valued mapping $S$, thanks to Theorem \ref{Lipschitz continuity of phi}(ii).
}}
\end{rem}

\begin{thm}[relationship between the optimality conditions of problems \eqref{eq:KKTR} and \eqref{eq:LLVFR}]\label{relationships between the stationarity concepts} The following statements hold true:
\begin{itemize}
  \item[(i)] 
  Assume that the conditions \eqref{L-x}--\eqref{CP-w} hold, with $z^\top g(x,y)=0$, $\nabla_{1, 2}(\nabla_2 \ell)(x, y, z) =0$, $z:=w$, and $w:=t$, for some $\lambda >0$. Then, conditions \eqref{KS-1}--\eqref{KS-3} are satisfied with $z:=y$.
  \item[(ii)] Suppose that the conditions \eqref{KS-1}--\eqref{KS-3} hold with $z=y$, $w:=z$, $\nabla_{1, 2}(\nabla_2 \ell)(x, y, z) =0$, and there exists a vector $s\in \mathbb{R}^m$ such that
  \begin{equation}\label{foman}
\nabla_2 g(x, y)s - \lambda g(x, y) \geq 0 \;\mbox{ and }\; z^\top \nabla_2 g(x, y)s=0
  \end{equation}
are satisfied for some $\lambda$. Then, the conditions \eqref{L-x}--\eqref{CP-w} also hold.
\end{itemize}
\end{thm}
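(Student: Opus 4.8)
The plan is to prove both implications by directly transcribing one stationarity system into the other, matching the multipliers and then reconciling the single gradient equation (\eqref{L-x} versus \eqref{KS-1}) by splitting it into its $x$- and $y$-blocks. The algebraic fact that drives everything is that, once the copy variable in \eqref{eq:LLVFR} is taken to be $z=y$, the term $\nabla_1\ell(x,y,z)$ in \eqref{KS-1} expands as $\nabla_1 f(x,y)+\nabla_1 g(x,y)^\top z$. Consequently, the contribution to \eqref{KS-1} given by $\lambda\nabla f(x,y)$ minus $\lambda$ times the vector with $x$-block $\nabla_1\ell(x,y,z)$ and zero $y$-block has $x$-part equal to $-\lambda\nabla_1 g(x,y)^\top z$ and $y$-part equal to $\lambda\nabla_2 f(x,y)$; this is exactly what must be identified with the block $-\lambda\nabla g(x,y)^\top z$ coming from \eqref{L-x}. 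I note at the outset that the two theorems recycle the letters $z$ and $w$ for different objects (the $\R^m$-copy of $y$ versus the $\R^q$ lower-level multiplier, and the $g$-multiplier versus the sign-multiplier), so the bookkeeping of the identifications needs care.

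For part (i), I would carry $u$ and $v$ over verbatim from \eqref{eq:KKTR}, set the copy variable $z:=y$, and take the \eqref{eq:LLVFR}-multiplier of $g(x,z)\leq 0$ equal to the \eqref{eq:KKTR} lower-level multiplier. With these choices, \eqref{VS-3} and \eqref{VS-4} are immediate, \eqref{KS-2} is exactly \eqref{L-eq} after $z:=y$, and \eqref{KS-3} follows from $z\geq 0$ (from \eqref{CP-w}), $g(x,y)\leq 0$, and the standing hypothesis $z^\top g(x,y)=0$. The only point requiring work is \eqref{KS-1}: I would first invoke $\nabla_{1,2}(\nabla_2\ell)(x,y,z)=0$ to delete the $s$-term from \eqref{L-x}, then split the remaining equality into its $x$- and $y$-blocks. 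The $x$-blocks of \eqref{KS-1} and \eqref{L-x} agree verbatim, and the $y$-blocks are reconciled by substituting $\nabla_2 f(x,y)=-\nabla_2 g(x,y)^\top z$, which is \eqref{L-eq}.

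For part (ii) the substitutions run the other way: take $u,v$ from \eqref{eq:LLVFR}, identify the \eqref{eq:KKTR} lower-level multiplier with the \eqref{eq:LLVFR}-multiplier $w$, set $z:=y$, and use the vector $s$ furnished by hypothesis. The decisive construction is to define the multiplier of $z\geq 0$ by $w:=\lambda g(x,y)-\nabla_2 g(x,y)s$, which forces \eqref{L-z} to hold by definition. Then \eqref{L-eq} is \eqref{KS-2}, conditions \eqref{CP-u} and \eqref{CP-v} are \eqref{VS-3} and \eqref{VS-4}, and \eqref{L-x} follows from \eqref{KS-1} by exactly the block-splitting of part (i). Hypothesis \eqref{foman} is consumed precisely in verifying \eqref{CP-w}: the inequality $\nabla_2 g(x,y)s-\lambda g(x,y)\geq 0$ gives $w\leq 0$, while $w^\top z=\lambda g(x,y)^\top z-z^\top\nabla_2 g(x,y)s=0$ results from the complementarity $g(x,y)^\top z=0$ (available from \eqref{KS-3} with $z:=y$) together with the orthogonality $z^\top\nabla_2 g(x,y)s=0$ postulated in \eqref{foman}.

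The hard part will be organisational rather than conceptual: the argument stands or falls on keeping the cross-model identifications of $z$ and $w$ consistent, and \eqref{CP-w} in part (ii) is the single place where the auxiliary hypothesis \eqref{foman} is genuinely needed — it is engineered so that the $w$ dictated by \eqref{L-z} is simultaneously nonpositive and complementary to $z$. Everything else is a faithful rewriting of one system as the other, once the Lagrangian gradient is decomposed along the $x$/$y$ split and \eqref{L-eq}/\eqref{KS-2} is used to glue the $y$-blocks together.
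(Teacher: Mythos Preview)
Your proposal is correct and follows essentially the same route as the paper: both arguments split \eqref{L-x}/\eqref{KS-1} into their $x$- and $y$-blocks, kill the $s$-term via the hypothesis $\nabla_{1,2}(\nabla_2\ell)(x,y,z)=0$, and glue the $y$-blocks together using \eqref{L-eq}/\eqref{KS-2}. If anything, your treatment of part (ii) is slightly more explicit than the paper's, since you spell out the definition $w:=\lambda g(x,y)-\nabla_2 g(x,y)s$ that forces \eqref{L-z} and then use \eqref{foman} precisely to obtain $w\le 0$ and $w^\top z=0$ in \eqref{CP-w}, whereas the paper simply asserts that \eqref{foman} suffices.
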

\begin{proof}For $(i)$, if we consider $(x, y, z, u, v, w)$ with $\nabla_{1, 2}(\nabla_2 \ell)(x, y, z) =0$  such that  \eqref{L-x}--\eqref{CP-w} hold for some $\lambda >0$, we precisely have from \eqref{L-x} that
\begin{eqnarray}
\nabla_1 F(x, y) + \nabla_1 G(x, y)^\top u + \nabla_1 g(x, y)^\top v - \lambda \nabla_1 g(x, y)^\top z = 0, \label{Bam1}\\
\nabla_2 F(x, y) + \nabla_2 G(x, y)^\top u + \nabla_2 g(x, y)^\top v - \lambda \nabla_2 g(x, y)^\top z = 0. \label{Bam2}
\end{eqnarray}
Obviously, equation \eqref{Bam1} is equivalent to
\[
\nabla_1 F(x, y) + \nabla_1 G(x, y)^\top u + \nabla_1 g(x, y)^\top v +\lambda \nabla_1 f(x,y) - \lambda \nabla_1 \ell(x, y, z) = 0.
\]
This confirms that the $x$-component of \eqref{KS-1} holds. Furthermore, we also have from equation \eqref{L-eq} that $\nabla_2 g(x, y)^\top z = - \nabla_2 f(x,y)$. Inserting this expression in \eqref{Bam2}, it follows that
$$
\nabla_2 F(x, y) + \nabla_2 G(x, y)^\top u + \nabla_2 g(x, y)^\top v + \lambda \nabla_2 f(x, y) =0.
$$
Hence, the $y$-component of \eqref{KS-1} is also satisfied. Subsequently, the whole system \eqref{KS-1}--\eqref{KS-3} is satisfied with $z:=y$, considering the assumption that $z^\top g(x,y)=0$ (i.e., $w^\top g(x,y)=0$).

For $(ii)$, consider  $(x, y, z, u, v, w)$ satisfying \eqref{KS-1}--\eqref{KS-3} with the related assumptions, then
\begin{equation}\label{Naws1}
\nabla_1 F(x,y)+ \nabla_1 G(x,y)^{\top}u + \nabla_1 g(x,y)^\top(v - \lambda z) + \nabla^2_{12}\ell(x, y, z)^\top s =0.
\end{equation}
Secondly, considering equality  $\nabla_2 f(x,y) = - \nabla_2 g(x, y)^\top w$ from \eqref{KS-2} (with  $z=y$ and $w:=z$) and inserting it in the $y$-component of \eqref{KS-1}, it holds that
\begin{equation}\label{Naws2}
\nabla_2 F(x,y)+ \nabla_2 G(x,y)^{\top}u + \nabla_2 g(x,y)^{\top}(v - \lambda w) + \nabla^2_{22}\ell(x,y,z)^\top s =0.
\end{equation}
It is clear that combining \eqref{Naws1} and \eqref{Naws2}, we have the fulfilment of equation \eqref{L-x}.  If additionally,  \eqref{foman} holds, then it follows that the whole system \eqref{L-x}--\eqref{CP-w} is satisfied.
\end{proof}
The assumption $\nabla_{1, 2}(\nabla_2\ell)(x, y, z) =0$ automatically holds if the functions $f$ and $g$ defining the lower-level problem \eqref{lower-level problem} take the form $f(x,y):= a(x)+b^\top y$ and $g(x,y):= C(x)+D^\top y$, respectively. Here, $a : \mathbb{R}^n \rightarrow \mathbb{R}$ and $C : \mathbb{R}^n \rightarrow \mathbb{R}^q$ while $b\in \mathbb{R}^m$ and $D\in \mathbb{R}^{q\times m}$.
In general, the stationarity conditions for problem \eqref{eq:P} obtained via \eqref{eq:LLVFR} differ significantly from those derived through \eqref{eq:KKTR}, especially due to the second order term appearing in the latter case. This theorem establishes a clear link between both classes of conditions, though under a very restrictive framework.  { {However, other setups different from the ones considered in Theorem \ref{relationships between the stationarity concepts} could lead to the same results. For example, reconsidering the bilevel program in Example \ref{Example with pics}, the optimality conditions \eqref{L-x}--\eqref{CP-w}, with $z:=w$ and $w:=t$, are verified by the point defined by
\[
(\bar x,\, \bar y,\, \bar v,\, \bar w,\, s,\, t)= \left(1, \, -1, \,1, \, \frac{1}{2}, \, 0, \, 0\right) \;\; \mbox{ for } \;\; \lambda = 4.
\]
Subsequently, we can also easily check that
\[
(\bar x,\, \bar y,\, \bar v,\, \bar w)= \left(1, \, -1, \, 1, \, \frac{1}{2}\right)
\]
satisfies \eqref{KS-1}--\eqref{KS-3}, with $z:=y$, for $\lambda = 4$. Note that $\nabla_{1, 2}(\nabla_2 \ell)(\bar x, \bar y, \bar w) =[2, \, 1]^\top \neq 0$. Hence, in this case, Theorem \ref{relationships between the stationarity concepts}(i) holds despite the failure of the imposed condition $\nabla_{1, 2}(\nabla_2 \ell)(\bar x, \bar y, \bar w) = 0$.
}}

The main observations and relationships between \eqref{eq:KKTR} and \eqref{eq:LLVFR} from this section are summarized in Table \ref{Table 2}.

\begin{table}[!th]
\begin{center}
\captionsetup{width=1.5\linewidth}
\begin{tabular}{l|l|l|c|l}
\hline
\multicolumn{2}{c|}{${}$}   & \textbf{KKTR}&  &   \textbf{LLVFR}      \\
\hline
\multirow{4}{*}{\textbf{Basic requirements}} & $F$, $G$, $f$, and $g$ are $C^{1}$                    & \cmark       &    &         \cmark     \\
                              & $f$ and $g$ are $C^{2}$                            & \cmark       &    &         \xmark     \\
                              & LMFCQ              & \cmark       &    &    \xmark     \\
                              & Convexity/\emph{isc} of $S$             & \xmark       &    &    \cmark     \\
\hline
\multirow{4}{*}{\textbf{Qualification conditions}}        & MFCQ can hold                   &  \xmark    &   &         \xmark      \\
                                                 & Remedy for MFCQ failure exists  &  \cmark    &   &         \xmark     \\
                                                 & Partial calmness can hold       &  KKT-PCAL    & $\Longleftarrow$  &     LLVF-PCAL   \\
                                                 &  MFCQ for penalized problem     &  KKT-MFCQ    & $\Longrightarrow$  &        LLVF-MFCQ     \\
\hline
\multirow{2}{*}{\textbf{Stationarity conditions}} &                     & \multirow{2}{*}{\eqref{L-x}--\eqref{CP-w}}     & $\overset{(\ast)}{\Longrightarrow}$  &  \multirow{2}{*}{\eqref{KS-1}--\eqref{KS-3}}           \\[1ex]
                                         &                     &      & $\overset{(\ast\ast)}{\Longleftarrow}$  &             \\
\hline
\end{tabular}
\end{center}
\caption{Requirements for necessary optimality conditions for problems \eqref{eq:KKTR} and \eqref{eq:LLVFR} and relationships between them. Here, \emph{isc} stands for inner semicontinuity and (*) and (**) refer to the assumptions in Theorem \ref{relationships between the stationarity concepts}(i) and (ii), respectively. KKT-PACAL and LLVF-PACAL represent the partial calmness condition for  \eqref{eq:KKTR} and \eqref{eq:LLVFR}, respectively.} \label{Table 2}
\end{table}
\section{Semismooth Newton-type method}\label{Newton method for the auxiliary equation}
In this section, we implement and compare the semismooth Newton scheme discussed in Subsection \eqref{Framework for optimality conditions and numerical comparison} on the necessary optimality conditions for \eqref{eq:KKTR} and \eqref{eq:LLVFR} presented in the previous section. Precisely, the optimality conditions of interest will be \eqref{L-x}-\eqref{CP-w} and \eqref{KS-1}-\eqref{KS-3}, respectively. To completely formulate these conditions as systems of equations, we use the \emph{Fischer-Burmeister function} \cite{FischerASpecial1992}  defined from $\mathbb{R}^2$ to $\mathbb{R}$ by
\begin{equation}\label{FB}
\fb(a,b):=\sqrt{a^2 + b^2} - a-b.
\end{equation}
For instance, we have $\left[u\geq 0, \; G(x,y)\leq 0,\; u^{\top}G(x,y)= 0\right] \Longleftrightarrow \psifb(-G(x,y), u)=0$ with
\begin{eqnarray}\label{FischerBurg}
 \psifb(-G(x,y), u) := \left[\begin{array}{c}
 \fb(-G_1(x,y), u_1) \\
\vdots \\
 \fb(-G_p(x,y), u_p)
 \end{array}
 \right].
\end{eqnarray}
To reformulate the optimality conditions resulting from the KKT reformulation as a system of equations, we denote the Lagrangian function of the corresponding problem by
\begin{eqnarray}\label{KKT-Lagrangian}
\begin{array}{c}
\mathcal{L}^{\lambda_1}_{1}(\zeta^1):= F(x,y) + u^{\top} G(x,y) + (v + \lambda_1  z)^\top g(x,y) + s^\top  \nabla_2\bar{\ell}(x,y,z) + w^\top z,
\end{array}
\end{eqnarray}
where we set $\zeta^1 := (x,y,z,s,u,v,w)$ and $\bar{\ell}(x,y,z):=f(x, y) - z^\top g(x, y)$ for convenience in the presentation and comparison purpose in this section. Based on \eqref{FischerBurg} and \eqref{KKT-Lagrangian}, the counterpart of equation \eqref{Eq-Main} for the system \eqref{L-x}--\eqref{CP-w} can be obtained as
\begin{eqnarray}\label{KKT-Phi-lambda}
\Phi_{1}^{\lambda_1}(\zeta^1):=\left[
\begin{array}{l}
  \nabla \mathcal{L}^{\lambda_1}_{1}(\zeta^1)\\
  h(x,y,z) \\
 \psifb(-G(x,y), u)\\
  \psifb(-g(x,y), v)\\
  \psifb(-z, w)
\end{array}
\right]=0,
\end{eqnarray}
where $\lambda := \lambda_1$ and   $\nabla \mathcal{L}^{\lambda_1}_{1}$ representing the gradient of  $\mathcal{L}^{\lambda_1}_{1}$ w.r.t. $(x,y,z)$.  This is a square  equation system with $n+2m+p+3q$ variables and $n+2m+p+3q$ nonlinear equations.

Similarly, we consider the Lagrangian function of problem \eqref{eq:LLVFR}
\begin{eqnarray}\label{VF-Lagrangian}
\begin{array}{ll}
\mathcal{L}^{\lambda_2}_{2}(\zeta^2):=&   F(x,y)+u^{\top} G(x,y)+ v^{\top}g(x,y) +\lambda_2 f(x,y)-\lambda_2 \ell(x,z,w),
\end{array}
\end{eqnarray}
where $\zeta^2 := (x, y, z, u, v, w)$ and $\ell$ is defined in \eqref{def-h}. Then, the expression of \eqref{Eq-Main} in the context of the system \eqref{KS-1}-\eqref{KS-3} can be rewritten as
\begin{eqnarray}\label{VF-Phi-lambda}
 \Phi_{2}^{\lambda_2}(\zeta^2) := \left[
\begin{array}{l}
  \nabla \mathcal{L}^{\lambda_2}_{2}(\zeta^2)\\
 \psifb(-G(x,y), u)\\
  \psifb(-g(x,y), v)\\
  \psifb(-g(x,z), w)
\end{array}
\right]=0,
\end{eqnarray}
where $\lambda := \lambda_2$ and $\nabla \mathcal{L}^{\lambda_2}_{2}$ representing the gradient of  $\mathcal{L}^{\lambda_2}_{2}$ w.r.t. $(x, y, z)$.  This is also a square system of equations of dimension variables and $(n+2m+p+2q) \times (n+2m+p+2q)$. Clearly, the system of equations resulting from \eqref{eq:KKTR} is $q \times q$ larger than the one resulting from \eqref{eq:LLVFR}.

For the convergence of Algorithm \ref{algorithm 1} for equations \eqref{KKT-Phi-lambda} and \eqref{VF-Phi-lambda}, it follows from Theorem \ref{convergence result} that it suffices to develop conditions ensuring that the functions $\Phi_{1}^{\lambda_1}$ and $\Phi_{2}^{\lambda_2}$ are semismooth (and/or strongly semismoothness) and CD-regular. To proceed, note that a vector-valued function $\psi :\mathbb{R}^{\tilde{n}}\rightarrow \mathbb{R}^{\tilde{p}}$ is SC$^1$ (resp. LC$^2$) if its all components $\psi_i$, $i=1, \ldots, {\tilde{p}}$  are SC$^1$ (resp. LC$^2$).
\begin{thm}[semismoothness and strong semismoothness]\label{cond-check} The following statements hold true:
\begin{itemize}
\item[(i)] Suppose that $f$ and $g$ are $\mathcal{C}^1$. If $F$, $G$, $\nabla f$, and $\nabla g_i$, $i=1,\ldots, q$, are SC$^1$ (resp. LC$^2$), then $\Phi_{1}^{\lambda_1}$ is semismooth (resp. strongly semismooth).
\item[(ii)] If $F$, $G$, $f$, and $g$ are SC$^1$ (resp. LC$^2$), then $\Phi_{2}^{\lambda_2}$ is semismooth (resp. strongly semismooth).
\end{itemize}
\end{thm}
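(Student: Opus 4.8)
The plan is to reduce both claims to three standard facts about (strongly) semismooth functions and then to a careful accounting of which derivatives of the problem data actually enter $\Phi_{1}^{\lambda_1}$ and $\Phi_{2}^{\lambda_2}$. First I would record the closure toolbox: finite sums, products, and compositions of semismooth functions are semismooth, and the same closure properties hold for strongly semismooth functions (see, e.g., \cite{Mifflin1977, QiSun1999, FischerASpecial1992}); moreover a vector-valued map is (strongly) semismooth precisely when each of its components is. I would then note two bridges between the smoothness scale and semismoothness. A $\mathcal{C}^1$ map is semismooth; and, more to the point here, a map whose Jacobian is locally Lipschitz (a $C^{1,1}$ map) is \emph{strongly} semismooth, because for such a map $\partial\psi(\bar x+d)=\{\nabla\psi(\bar x+d)\}$, $\psi'(\bar x;d)=\nabla\psi(\bar x)^\top d$, and hence $Vd-\psi'(\bar x;d)=(\nabla\psi(\bar x+d)-\nabla\psi(\bar x))^\top d=O(\|d\|^2)$. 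Consequently, SC$^1$ of a scalar function gives a semismooth gradient by definition, while LC$^2$ gives a $C^{1,1}$, hence strongly semismooth, gradient. Finally I would invoke the known strong semismoothness of the Fischer--Burmeister function \eqref{FB}, so that every block $\psifb(\cdot,\cdot)$ is a composition of a strongly semismooth outer map with its arguments.

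With this toolbox, part (ii) is the cleaner one, and I would treat it first. Inspecting $\mathcal{L}^{\lambda_2}_{2}$ in \eqref{VF-Lagrangian}, its gradient $\nabla\mathcal{L}^{\lambda_2}_{2}$ w.r.t.\ $(x,y,z)$ is a finite sum of terms of the form $\nabla F$, $\nabla G^\top u$, $\nabla g^\top v$, $\lambda_2\nabla f$, and $\nabla f(x,z)+\nabla g(x,z)^\top w$; crucially, only \emph{first} derivatives of $F,G,f,g$ appear, each multiplied by a component of the decision vector. Under the SC$^1$ hypothesis each of $\nabla F,\nabla G,\nabla f,\nabla g$ is semismooth, and products with the (smooth) multipliers together with finite sums preserve semismoothness, so $\nabla\mathcal{L}^{\lambda_2}_{2}$ is semismooth. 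The three blocks in \eqref{VF-Phi-lambda} compose the strongly semismooth $\psifb$ with the $\mathcal{C}^1$ inner maps $(x,y,u)\mapsto(-G(x,y),u)$, $(x,y,v)\mapsto(-g(x,y),v)$, $(x,z,w)\mapsto(-g(x,z),w)$, hence are semismooth. Stacking components yields semismoothness of $\Phi_{2}^{\lambda_2}$. For the strongly semismooth variant, LC$^2$ upgrades each $\nabla F,\nabla G,\nabla f,\nabla g$ to a $C^{1,1}$, hence strongly semismooth, map, and the inner maps of the $\psifb$ blocks become $C^{1,1}$ as well; the same closure properties then give strong semismoothness of $\Phi_{2}^{\lambda_2}$.

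Part (i) proceeds identically in structure, the only difference being the order of differentiation forced by the lower-level stationarity term $s^\top\nabla_2\bar{\ell}$ in $\mathcal{L}^{\lambda_1}_{1}$ \eqref{KKT-Lagrangian}. Differentiating this term w.r.t.\ $(x,y,z)$ produces the second derivatives $\nabla^2 f$ and $\nabla^2 g_k$ (contracted against $s$ and $z$), alongside the first-order pieces $\nabla F,\nabla G^\top u,\nabla g^\top(v+\lambda_1 z)$ and the $w$-block. This is precisely why the hypothesis is imposed on $\nabla f$ and $\nabla g_i$ rather than on $f,g$: SC$^1$ of $\nabla f$ and $\nabla g_i$ is exactly semismoothness of $\nabla^2 f$ and $\nabla^2 g_i$, while LC$^2$ of $\nabla f$ and $\nabla g_i$ makes these Hessians $C^{1,1}$, hence strongly semismooth. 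The companion block $h(x,y,z)$, the lower-level stationarity map $\nabla_2 f(x,y)+\nabla_2 g(x,y)^\top z$, involves only the first derivatives $\nabla_2 f,\nabla_2 g$, which are $\mathcal{C}^1$ (hence semismooth) under the stated assumptions, and the three $\psifb$ blocks are handled verbatim as in part (ii) (note that $\psifb(-z,w)$ has a $C^\infty$ inner map). Assembling the blocks via sums, products, and composition then gives that $\Phi_{1}^{\lambda_1}$ is semismooth under the SC$^1$ assumptions and strongly semismooth under the LC$^2$ assumptions.

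The bookkeeping of $\nabla\mathcal{L}^{\lambda_1}_{1}$ is where care is needed, but the only genuinely load-bearing, non-mechanical step is the implication $C^{1,1}\Rightarrow$ strongly semismooth used to convert the Lipschitz-derivative hypotheses into strong semismoothness; everything else is an application of the closure properties and the known strong semismoothness of \eqref{FB}. I expect no real obstacle beyond identifying the second-order terms in $\nabla\mathcal{L}^{\lambda_1}_{1}$ correctly, so that the SC$^1$/LC$^2$ requirement lands on $\nabla f,\nabla g_i$ and not merely on $f,g$.
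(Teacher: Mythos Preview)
The paper actually states this theorem without proof; the sentence following it is only a remark on how the SC$^1$/LC$^2$ hypotheses on $\nabla f,\nabla g_i$ can be ensured from smoothness of $f,g$. Your argument is the natural verification one would supply: use the componentwise characterization of (strong) semismoothness, the closure of (strong) semismoothness under sums, products, and composition, the strong semismoothness of the Fischer--Burmeister map, and the implication $C^{1,1}\Rightarrow$ strongly semismooth to handle the smooth blocks. Your bookkeeping is correct, in particular the observation that differentiating the term $s^\top\nabla_2\bar\ell$ in \eqref{KKT-Lagrangian} w.r.t.\ $(x,y)$ brings in $\nabla^2 f$ and $\nabla^2 g_k$, which is exactly why the hypothesis in (i) is placed on $\nabla f,\nabla g_i$ rather than on $f,g$. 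There is nothing to compare against in the paper, and your proposal is a valid and complete proof sketch.
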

One can see that if a function $\psi$ is twice continuously differentiable and $\nabla^2 \psi$  is semismooth, then  $\nabla \psi$ is SC$^1$. In a similar way, if $\psi$ is  thrice continuously differentiable and $\nabla^3 \psi$  is locally Lipschitzian, then $\nabla \psi$ is LC$^2$. Clearly, the conditions in (i) are stronger than the ones in (ii).


\begin{thm}[estimate of the generalized Jacobian of $\Phi_{1}^{\lambda_1}$]\label{KKT Jacobian Phi}  Let $F$ and $G$ (resp. $f$ and $g$) be twice (resp. thrice) continuously differentiable  at $\bar\zeta:=(\bar x,\bar y, \bar z, \bar s,\bar u,\bar v,\bar w)$. If $\lambda_1>0$, then $\Phi_{1}^{\lambda_1}$ is semismooth at $\bar \zeta$ and any matrix $W^{\lambda_1}\in \partial \Phi_{1}^{\lambda_1}(\bar \zeta)$ can take the form
$ W^{\lambda_1} =\left[
\begin{array}{cccc }
A &C \\
 B &D
\end{array}
\right]$
with
\begin{eqnarray*}
A&:=&\left[
\begin{array}{cccc}
\nabla^2_{11} \mathcal{L}_{1}^{\lambda_1}(\bar\zeta) & \nabla^2_{12} \mathcal{L}_{1}^{\lambda_1}(\bar\zeta)^\top   & \nabla^2_{13} \mathcal{L}_{1}^{\lambda_1}(\bar\zeta)^\top & \nabla^2_{12} \bar{\ell}(\bar x,\bar y, \bar z)^{\top}  \\
\nabla^2_{12} \mathcal{L}_{1}^{\lambda_1}(\bar\zeta) &  \nabla^2_{22} \mathcal{L}_{1}^{\lambda_1}(\bar\zeta) & \nabla^2_{23} \mathcal{L}_{1}^{\lambda_1}(\bar\zeta)^\top &\nabla^2_{22}\bar{\ell}(\bar x,\bar y, \bar z)^{\top} \\
\nabla^2_{13} \mathcal{L}_{1}^{\lambda_1}(\bar\zeta)   &    \nabla^2_{23} \mathcal{L}_{1}^{\lambda_1}(\bar\zeta)  &O &  - \nabla_2 g(\bar x, \bar y)   \\
\nabla^2_{12}\bar{\ell}(\bar x, \bar y, \bar z) &   \nabla^2_{22}\bar{\ell}(\bar x,\bar y, \bar z)  & -\nabla_2 g(\bar x,\bar y)^{\top}& O
\end{array}
\right],\\
B&:=&\left[
\begin{array}{cccc}
  \Lambda_1 \nabla_{1} G(\bar x,\bar y) &    \Lambda_1 \nabla_{2} G(\bar x,\bar y)   & O & O \\
    \Lambda_2 \nabla_{1} g(\bar x,\bar y) &    \Lambda_2 \nabla_{2} g(\bar x,\bar y)   & O & O \\
       O &    O   &  \Lambda_3 & O
\end{array}
\right],\\
C&:=&\left[
\begin{array}{ccccccc}
 \nabla_1  G(\bar x,\bar y)^{\top} & \nabla_1  g(\bar x,\bar y)^{\top}&O\\
 \nabla_2  G(\bar x,\bar y)^{\top} & \nabla_2 g(\bar x,\bar y)^{\top}&O\\
 O & O & \mathcal{I} \\
 O & O & O
\end{array}
\right], \quad \mbox{ and } \quad D:=
\left[
\begin{array}{ccc}
 \Gamma_1& O & O\\
 O&  \Gamma_2& O\\
 O& O & \Gamma_3
\end{array}
\right],
\end{eqnarray*}
where  $\Lambda_i :={\rm diag} (a^i)$ and $\Gamma_i :={\rm diag}(b^i)$, $i=1, 2, 3$, are such that
 \begin{equation}\label{KKT ab definition}
    (a^i_j,b^i_j)\left\{\begin{array}{lll}
                  =&(0,-1) & \mbox{ if } \;j\in  \eta^i, \\
                  =&(1,0) & \mbox{ if } \;j\in \nu^i, \\
                  \in& \{(\alpha, \beta): \; (\alpha-1)^2 + (\beta+1)^2\leq 1\} & \mbox{ if }\; j\in \theta^i,
                \end{array}
\right.
 \end{equation}
 with the index sets $\eta^i$, $\nu^i$, and $\theta^i$, $i=1, 2, 3$ defined in \eqref{multiplier sets}--\eqref{nu2nu3}.
\end{thm}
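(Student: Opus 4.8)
The plan is to split $\Phi_{1}^{\lambda_1}$ from \eqref{KKT-Phi-lambda} into its smooth part, namely the top two block-rows $\nabla\mathcal{L}^{\lambda_1}_1$ and $h$, and its three nonsmooth Fischer--Burmeister blocks $\psifb(-G,u)$, $\psifb(-g,v)$, $\psifb(-z,w)$. The semismoothness claim is immediate: under the stated hypotheses $\nabla F,\nabla G$ are $\mathcal{C}^1$ and $\nabla f,\nabla g_i$ are $\mathcal{C}^2$, so $F$, $G$, $\nabla f$, $\nabla g_i$ are all SC$^1$, and Theorem \ref{cond-check}(i) then yields that $\Phi_{1}^{\lambda_1}$ is semismooth at $\bar\zeta$. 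It therefore remains to compute, block by block, the form of an arbitrary element of the generalized Jacobian $\partial\Phi_{1}^{\lambda_1}(\bar\zeta)$ with respect to the ordered variable $\zeta^1=(x,y,z,s,u,v,w)$.

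For the smooth part I would simply differentiate. Under the twice/thrice differentiability assumptions, $\nabla\mathcal{L}^{\lambda_1}_1$ and $h$ are continuously differentiable, so their contribution to $\partial\Phi_{1}^{\lambda_1}$ is the single classical Jacobian. Differentiating $\nabla\mathcal{L}^{\lambda_1}_1$ and $h$ with respect to the primal block $(x,y,z,s)$ produces the matrix $A$, where the entries $\nabla^2_{ij}\mathcal{L}^{\lambda_1}_1$ account for the Hessian of the Lagrangian and the entries $\nabla^2_{i2}\bar\ell$ and $-\nabla_2 g$ arise from the coupling term $s^\top\nabla_2\bar\ell$ and from $h$; symmetry of the Hessians accounts for the transposes in the off-diagonal positions. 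Differentiating the same two blocks with respect to $(u,v,w)$ gives $C$: the multiplier $u$ enters only through $u^\top G$, $v$ only through $v^\top g$, and $w$ only through $w^\top z$, which produces the blocks $\nabla_i G^\top$, $\nabla_i g^\top$ and the identity $\mathcal{I}$, with zeros elsewhere because $h$ is independent of $(u,v,w)$.

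For the three Fischer--Burmeister blocks the main tool is Clarke's chain rule. Since each inner argument ($-G$ and $u$; $-g$ and $v$; $-z$ and $w$) is continuously differentiable and the FB map acts componentwise, every element of $\partial\Phi_{1}^{\lambda_1}$ restricted to these rows is obtained by composing an element of $\partial\fb$ at the corresponding scalar argument with the classical Jacobian of that argument; this yields the inclusion ``every $W^{\lambda_1}\in\partial\Phi_{1}^{\lambda_1}(\bar\zeta)$ has the stated block form.'' The crux is the pointwise description of $\partial\fb$: where $\fb$ is differentiable its gradient lies on a shifted unit circle, and at the origin $\partial\fb(0,0)=\{(\xi-1,\eta-1):\xi^2+\eta^2\le1\}$. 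Classifying each index by the partition \eqref{multiplier sets}--\eqref{nu2nu3}, a component in $\nu^i$ (active constraint, positive multiplier) is differentiable and contributes $(1,0)$, a component in $\eta^i$ (inactive, zero multiplier) contributes $(0,-1)$, and a component in $\theta^i$ (the degenerate case) contributes an element of the disk; tracking the sign flip caused by the $-G$, $-g$, $-z$ arguments turns $\{\xi^2+\eta^2\le1\}$ into the set described by $(\alpha-1)^2+(\beta+1)^2\le1$ in \eqref{KKT ab definition}. Collecting the coefficients that multiply the argument's $(x,y,z)$-derivatives into $\Lambda_i$ and those that multiply the multiplier derivatives into $\Gamma_i$ then yields exactly the blocks $B$ and $D$.

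The step I expect to be most delicate is precisely this last one: verifying that the Clarke chain rule delivers the claimed block form as an honest over-approximation (inclusion), and, above all, getting the signs right so that the disk in \eqref{KKT ab definition} is centred at $(1,-1)$ rather than at $(-1,-1)$. The sign bookkeeping through the minus signs on $G$, $g$, $z$ is where an error would most easily creep in, so I would derive the scalar partial derivatives $\partial_a\fb$ and $\partial_b\fb$ explicitly on each of the three index classes $\eta^i$, $\theta^i$, $\nu^i$ before assembling the diagonal matrices $\Lambda_i$ and $\Gamma_i$.
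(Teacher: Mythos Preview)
Your approach is correct and is the standard route to such results. Note, however, that the paper does not supply a proof of this theorem: it is stated without argument, and the companion result for the LLVF reformulation (Theorem~\ref{VF Jacobian Phi}) is attributed to \cite{FischerZemZhou2019}. So there is no ``paper's own proof'' against which to compare; your outline is essentially what any proof of this statement must do.

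A couple of minor remarks on your write-up. First, the semismoothness claim is indeed immediate from Theorem~\ref{cond-check}(i), since $\mathcal{C}^2$ functions are SC$^1$ and $\mathcal{C}^3$ functions have SC$^1$ gradients. Second, your reading of the statement as an \emph{inclusion} (``every $W^{\lambda_1}\in\partial\Phi_1^{\lambda_1}(\bar\zeta)$ lies in the described set of block matrices'') is the right one, and the Clarke chain rule for the composition of $\fb$ with the smooth inner maps delivers exactly that direction; since each Fischer--Burmeister component depends on its own scalar pair independently, the componentwise calculus is clean. Third, your sign bookkeeping is accurate: writing an arbitrary element of $\partial\fb(0,0)$ as $(\xi-1,\eta-1)$ with $\xi^2+\eta^2\le 1$ and composing with the Jacobian of $(-G_j,u_j)$ gives $a^1_j=1-\xi$ multiplying $\nabla G_j$ and $b^1_j=\eta-1$ multiplying $e_j$, whence $(a^1_j-1)^2+(b^1_j+1)^2=\xi^2+\eta^2\le 1$, matching \eqref{KKT ab definition}; the cases $j\in\eta^i$ and $j\in\nu^i$ fall out by direct evaluation of $\nabla\fb$ at the relevant nondegenerate points. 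The same analysis carries over verbatim to the $g$- and $z$-blocks.
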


The next result provides a framework for the CD-regularity of the function $\Phi^{\lambda_1}$. To perform this, we define  the cone of feasible directions for problem \eqref{re-eq-P},
\begin{eqnarray}\label{cone of feasible}
Q_1(\bar x, \bar y, \bar z)  := \left\{\left(d^1, \, d^2, \, d^3\right)\in\mathbb{R}^{n+m+q}~\left|~
\begin{array}{rl}
\nabla G_i(\bar x, \bar y)^\top d^{12}=0,  &i\in \nu^1 \\
 \nabla g_j(\bar x, \bar y)^\top d^{12}=0,  &j\in \nu^2\\
  d_j^3=0, & j\in \eta^3
\end{array}\right.
\right\},
\end{eqnarray}
We denote by $\nabla^2\mathcal{L}_1^{\lambda_1}$ the Hessian matrix of $\mathcal{L}_1^{\lambda_1}$ w.r.t $( x,   y,  z)$, i.e.,
$$
\nabla^2\mathcal{L}_1^{\lambda_1}(  \zeta)=\left[
\begin{array}{clc}
\nabla^2_{11}\mathcal{L}_1^{\lambda_1}( \zeta)&\nabla^2_{12}\mathcal{L}_1^{\lambda_1}( \zeta)^\top&\nabla^2_{13}\mathcal{L}_1^{\lambda_1}(  \zeta)^\top\\
\nabla^2_{12}\mathcal{L}_1^{\lambda_1}( \zeta)&\nabla^2_{22}\mathcal{L}_1^{\lambda_1}(  \zeta)&\nabla^2_{23}\mathcal{L}_1^{\lambda_1}( \zeta)^\top\\
\nabla^2_{13}\mathcal{L}_1^{\lambda_1}( \zeta)&\nabla^2_{23}\mathcal{L}_1^{\lambda_1}(  \zeta) &O
\end{array}
\right].
$$

 \begin{thm}[CD-regularity of $\Phi^{\lambda_1}$ in the general case]\label{SOSSC-Theorem 1-0}{Suppose that $f$ and $g$ are $\mathcal{C}^1$ and let $F$, $G$, $\nabla f$, $\nabla g_i$, $i=1, \ldots, q$ be SC$^1$. If the point $\bar \zeta:=(\bar x, \bar y, \bar z, \bar s, \bar u, \bar v, \bar w)$ satisfy the optimality conditions  \eqref{L-x}-\eqref{CP-w} for some $\lambda_1 >0$, then $\Phi^{\lambda_1}$ is CD-regular at $\bar \zeta$, provided that the following conditions hold:
\begin{itemize}
\item[(i)] The family of vectors $\left\{\left.\nabla G_i(\bar x,\bar y)\right|\,i\in I^1\right\} \cup  \left\{\left.\nabla g_j(\bar x,\bar y)\right|\,j\in I^2\right\} \cup \left\{\left.\nabla_{1, 2}\left(\nabla_{2_i}\bar\ell\right)(\bar x, \bar y, \bar z)\right|\,i=1, \ldots, m\right\}$ is linearly independent;
\item[(ii)] For all $\left(d^1, \, d^2, \, d^3\right) \in Q_1(\bar x, \bar y, \bar z)$ with $d^{12} \neq 0$, we have
  \begin{equation}\label{SOSSC-0}
 (d^{123})^{\top} \nabla^2\mathcal{L}_1^{\lambda_1}( \bar\zeta)d^{123} >0.
  \end{equation}
  \end{itemize}
}
\end{thm}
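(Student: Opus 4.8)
The plan is to prove that $\Phi_1^{\lambda_1}$ is CD-regular at $\bar\zeta$ by showing directly that every element $W^{\lambda_1}\in\partial\Phi_1^{\lambda_1}(\bar\zeta)$, written in the block form $A,B,C,D$ of Theorem \ref{KKT Jacobian Phi}, has trivial kernel. So I would fix a vector $\xi=(d^1,d^2,d^3,\delta,e^u,e^v,e^w)$ — the increments associated with $(x,y,z,s,u,v,w)$ — satisfying $W^{\lambda_1}\xi=0$ and work towards $\xi=0$. The first move is to read off the rows coming from $B$ and $D$: for each constraint index these reduce to a scalar relation of the form $a^i_j(\nabla(\cdot)_j^\top d^{12})+b^i_j\,e_j=0$, with $(a^i_j,b^i_j)$ governed by \eqref{KKT ab definition}. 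On $\eta^i$ one obtains $e_j=0$ (the increment of a strictly inactive multiplier vanishes); on $\nu^i$ one obtains the orthogonality $\nabla(\cdot)_j^\top d^{12}=0$ (and, for the bound block, the vanishing of the corresponding component of $d^3$), which together are precisely the equalities defining $Q_1(\bar x,\bar y,\bar z)$ in \eqref{cone of feasible}, so that $(d^1,d^2,d^3)\in Q_1$. On the biactive set $\theta^i$ the disk $(\alpha-1)^2+(\beta+1)^2\le1$ forces $a^i_j\ge0$ and $b^i_j\le0$, and a short case check then shows that in every case the product $(\nabla(\cdot)_j^\top d^{12})\,e_j$ (resp. $d^3_j e^w_j$) is nonnegative.

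Next I would exploit the $s$-row $h(x,y,z)$ of the system, which linearizes the lower-level stationarity constraint and yields $\nabla(\nabla_2\bar{\ell})(\bar x,\bar y,\bar z)\,d^{123}=0$. Premultiplying the first block equation $A(d^1,d^2,d^3,\delta)^\top+C(e^u,e^v,e^w)^\top=0$ by $(d^1,d^2,d^3,\delta)$ and using the symmetry of $A$, the identity just obtained makes the $\delta$-coupling terms drop out, leaving $(d^{123})^\top\nabla^2\mathcal{L}_1^{\lambda_1}(\bar\zeta)\,d^{123}$ on one side. On the other side, the cross term $(d^1,d^2,d^3,\delta)^\top C(e^u,e^v,e^w)^\top$ collapses: its $\eta$-contributions vanish because the multiplier increments are zero, its $\nu$-contributions vanish by the $Q_1$ equalities, and its $\theta$-contributions are exactly the nonnegative products identified above. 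Hence $(d^{123})^\top\nabla^2\mathcal{L}_1^{\lambda_1}(\bar\zeta)\,d^{123}\le0$ while $(d^1,d^2,d^3)\in Q_1$. Condition (ii), i.e.\ \eqref{SOSSC-0}, then forces $d^{12}=0$; feeding this back shows that the biactive cross terms, being nonnegative and summing to zero, vanish individually.

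With $d^1=d^2=0$ in hand, I would substitute back into the remaining stationarity rows. The $z$-row gives $e^w=\nabla_2 g(\bar x,\bar y)\,\delta$, while the $x$- and $y$-rows combine into a single identity of the form $\sum_k\delta_k\nabla_{1,2}(\nabla_{2_k}\bar{\ell})(\bar x,\bar y,\bar z)+\sum_{i\in I^1}e^u_i\nabla G_i(\bar x,\bar y)+\sum_{j\in I^2}e^v_j\nabla g_j(\bar x,\bar y)+(\cdots)d^3=0$, where the residual $d^3$-term is controlled using $\nabla_2 g^\top d^3=0$ together with the vanishing of $d^3$ on the strongly active bound indices. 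At this point the linear independence hypothesis (i) is precisely what is needed: since the gradients $\{\nabla G_i\}_{i\in I^1}$, $\{\nabla g_j\}_{j\in I^2}$ and $\{\nabla_{1,2}(\nabla_{2_i}\bar{\ell})\}_{i=1}^m$ are linearly independent, every coefficient must vanish, giving $\delta=0$, $e^u=0$, $e^v=0$, and then $d^3=0$ and $e^w=0$. Thus $\xi=0$ and $W^{\lambda_1}$ is nonsingular, which is the claim.

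The main obstacle I expect is the bookkeeping around the biactive index sets $\theta^i$: one must verify, case by case from the Fischer--Burmeister disk in \eqref{KKT ab definition}, that the relevant primal--dual cross products are genuinely nonnegative (so that the quadratic-form inequality closes in the right direction) and then that they are forced individually to zero once $d^{12}=0$. The second delicate point is the final reduction, where one must isolate the $d^3$-contribution and confirm that hypothesis (i) — rather than merely a lower-level constraint qualification — is the exact full-rank statement annihilating the remaining $s$- and multiplier-increments. In effect, the regularity condition (i) and the second-order condition (ii) play here the same two roles as full row rank of the active Jacobian and positive definiteness of the Hessian on the critical cone do in the nonsingularity criterion for a classical KKT saddle matrix.
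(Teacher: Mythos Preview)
Your proposal is correct and follows essentially the same route as the paper: take an arbitrary $W^{\lambda_1}\in\partial\Phi_1^{\lambda_1}(\bar\zeta)$, pick $\xi$ in its kernel, use the Fischer--Burmeister rows to extract the $\eta^i/\theta^i/\nu^i$ relations, kill the $s$-coupling via the linearized row $\nabla(\nabla_2\bar\ell)\,d^{123}=0$, contract the stationarity rows against $d^{123}$ to obtain the identity ``quadratic form $+$ nonnegative sum $=0$'', apply (ii) to force $d^{12}=0$, and finish with the linear-independence hypothesis (i).

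Two small points where your write-up diverges from the paper. First, the third defining equality of $Q_1$ concerns $d^3_j=0$ for $j\in\eta^3$, whereas the complementarity analysis on $\nu^3$ yields $d^3_j=0$ on the \emph{strongly active} bound indices; your sentence ``which together are precisely the equalities defining $Q_1$'' slides over this mismatch, so you should check explicitly which index set the cone actually uses before invoking (ii). Second, the paper establishes $d^3=0$ \emph{before} applying (i): once the biactive cross terms vanish one gets $d^3_j=0$ on $\theta^3$, the table gives $d^3_j=0$ on $\nu^3$, and the remaining components are disposed of, so that the $(x,y)$-rows reduce cleanly to the linear combination to which (i) applies. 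Your plan instead carries a ``residual $d^3$-term'' into the linear-independence step and controls it afterwards; this can be made to work, but the paper's order is tidier because the $d^3$-dependence of the $(x,y)$-rows involves $\nabla^2_{13}\mathcal{L}_1^{\lambda_1}$ and $\nabla^2_{23}\mathcal{L}_1^{\lambda_1}$, which are not directly spanned by the family in (i).
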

\begin{proof}
Let $W^{\lambda_1}$ be any element from $\partial \Phi^{\lambda_1}(\bar \zeta)$. Then, it can take the form described in Theorem \ref{KKT Jacobian Phi}. Hence, to prove that $W^{\lambda_1}$ is non-singular, we need to show that for $d:=(d^1, d^2, d^3, d^4, d^5, d^6, d^7)$ with $d^1\in \mathbb{R}^n$, $d^2\in \mathbb{R}^m$, $d^3\in \mathbb{R}^q$, $d^4\in \mathbb{R}^m$, $d^5\in \mathbb{R}^p, d^6\in \mathbb{R}^q$, and $d^7\in \mathbb{R}^q$, we have $d =0$ whenever $W^{\lambda_1} d =0$. To proceed, start by noticing that from Theorem \ref{KKT Jacobian Phi}, $W^{\lambda_1} d =0$ is equivalently to
\begin{eqnarray}
  \nabla^2_{11} \mathcal{L}_1^{\lambda_1} d^1 +\nabla^2_{21} \mathcal{L}_1^{\lambda_1} d^2+\nabla^2_{31} \mathcal{L}_1^{\lambda_1} d^3+ \nabla_1\left(\nabla_{2}\bar\ell\right)^\top d^4+ \nabla_1 G^\top d^5 + \nabla_1 g^\top d^6=0,\label{p11}\\
 \nabla^2_{12} \mathcal{L}_1^{\lambda_1} d^1 +\nabla^2_{22} \mathcal{L}_1^{\lambda_1} d^2+\nabla^2_{32} \mathcal{L}_1^{\lambda_1} d^3+  \nabla_2\left(\nabla_{2}\bar\ell\right)^\top d^4+ \nabla_2 G^\top d^5 + \nabla_2 g^\top d^6=0,\label{p12}\\
 \nabla^2_{13} \mathcal{L}_1^{\lambda_1} d^1 +\nabla^2_{23} \mathcal{L}_1^{\lambda_1} d^2 - \nabla_{2} g d^4 +  d^7=0,\label{p13}\\
   \nabla^2_{12}\bar{\ell}d^1 +  \nabla^2_{22}\bar{\ell}d^2 - \nabla_{2} g^\top  d^3 =0,\label{p131}\\
  \forall j=1, \ldots, p: \;\; a^1_j \nabla G_j^\top d^{12} + b^1_j d^5_j =0,\label{p14}\\
    \forall j=1, \ldots, q: \;\; a^2_j \nabla g_j^\top d^{12} + b^2_j d^6_j =0,\label{p15}\\
     \forall j=1, \ldots, q: \;\; a^3_j d^{3}_j + b^3_j d^7_j =0,\label{p16}
\end{eqnarray}
where $\mathcal{L}_1^{\lambda_1}:=\mathcal{L}_1^{\lambda_1}(\bar \zeta)$, $\bar{\ell} := \bar{\ell}(\bar x, \bar y, \bar z)$, $G:=G(\bar x, \bar y)$, and $g:=g(\bar x, \bar y)$. Recall that $p$ and $q$ represent the number of components of upper- (resp. lower-) constraint functions of problem \eqref{eq:P}. For $i=1, 2, 3$, let $p^1:=p$, $p^2:=q$,  and $p^3:=q$. Since $(\bar x, \bar y, \bar z)$ satisfies the optimality conditions  \eqref{L-x}-\eqref{CP-w}, then it is feasible to problem \eqref{eq:KKTR}. Hence, from \eqref{KKT ab definition}, we have
\begin{eqnarray}
  \label{Pij-1}\theta^i&=&\left\{j\in\{1 \ldots, p^i\}~|~(a^i_j-1)^2 + (b_j^i+1)^2\leq 1 \right\},\\
  \label{Pij-2}  \eta^i&=&\left\{j\in\{1 \ldots, p^i\}~|~ a^i_j=0,~b^i_j=-1\right\},\\
 \label{Pij-3} \nu^i&=&\left\{j\in\{1 \ldots, p^i\}~|~ a^i_j=1,~b^i_j=0\right\}, \\
 \label{Pij-4} \{1, \ldots, p^i\}&=&\theta^i\cup \eta^i \cup \nu^i,~i=1,2,3.
\end{eqnarray}
Clearly,  $a^i_j>0$ and $b^i_j<0$ for any $j\in \theta^i$, and \eqref{p14}--\eqref{p16} and \eqref{Pij-1}--\eqref{Pij-4} lead to the table
\vspace{-5mm}
\begin{table}[H]
{\renewcommand\baselinestretch{1.5}\selectfont
\begin{tabular}{p{2cm} p{5cm} p{3cm}l}\\\hline
 &$j \in \theta^i$&$j \in \eta^i$&$j \in \nu^i$\\\hline
 $i=1$&$\nabla G_j^\top d^{12}=c^1_j d^5_j$& $d^5_j=0$ & $\nabla G_j^\top d^{12}=0$ \\
  $i=2$&$\nabla g_j^\top d^{12}=c^2_j d^6_j$& $d^6_j=0$ & $\nabla g_j^\top d^{12}=0$\\
 $i=3$& $d^3_j=c^3_j d^7_j$ & $d^7_j=0$ & $d^3_j=0$ \\ \hline
\end{tabular}} 
\end{table}
\noindent where $c^1_j:=-b^1_j/a^1_j>0$ for $j\in \theta^1$, $c^2_j:=-b^2_j/a^2_j>0$ for $j\in \theta^2$, and $c^3_j:=-b^3_j/a^3_j>0$ for $j\in \theta^3$. By respectively multiplying \eqref{p11}, \eqref{p12}, and \eqref{p13} from the left-hand-side by $(d^1)^\top$, $(d^2)^\top$  and $(d^3)^\top$, and adding the resulting sums together,
\begin{equation}\label{Quad-term}
    \begin{array}{l}
 (d^{123})^\top \nabla^2 \mathcal{L}_1^{\lambda_1} d^{123}+  (d^{4})^\top \nabla \left(\nabla_2 \bar\ell \right) d^{123} + (d^5)^\top\nabla G  d^{12}
  +
  (d^6)^\top\nabla g  d^{12} +

  (d^3)^\top  d^{7}=0.
\end{array}
\end{equation}
Note that from \eqref{p131}, we have
\begin{equation}\label{d37} \nabla \left(\nabla_2 \bar\ell \right) d^{123}=\nabla_{1} \left(\nabla_2 \bar\ell \right)  d^1 + \nabla_{2} \left(\nabla_2 \bar\ell \right) d^2 - \nabla_{2} g^\top  d^3=0,\end{equation}
while from the table above, one can see that
\begin{eqnarray}\label{d567}
(d^5)^\top\nabla G  d^{12}+ (d^6)^\top\nabla g  d^{12}+ (d^3)^\top  d^{7}&=&\sum_{j=1}^{p^1} d^5_j\nabla G_j^\top  d^{12}+\sum_{j=1}^{p^2} d^6_j\nabla g_j ^\top d^{12}+\sum_{j=1}^{p^3}d^3_j  d^{7}_j\nonumber\\
&=&\sum_{j\in\theta^1} c^1_j(d^5_j)^2+\sum_{j\in\theta^2} c^2_j(d^6_j)^2+\sum_{j\in\theta^3} c^3_j(d^7_j)^2.
  \end{eqnarray}
Combining \eqref{Quad-term}--\eqref{d567} yields that
\begin{equation}\label{Quad-term-1}
    \begin{array}{l}
 (d^{123})^\top \nabla^2 \mathcal{L}_1^{\lambda_1} d^{123}+ \sum_{j\in\theta^1} c^1_j(d^5_j)^2+\sum_{j\in\theta^2} c^2_j(d^6_j)^2+\sum_{j\in\theta^3} c^3_j(d^7_j)^2=0.
\end{array}
\end{equation}
Since $c^1_j >0$ for $j\in \theta^1$, $c^2_j >0$ for $j\in \theta^2$, $c^3_j >0$ for $j\in \theta^3$, and $(d^{123})^\top \nabla^2 \mathcal{L}_1^{\lambda_1} d^{123}>0$ for any $d^{123}\in Q_1(\bar x, \bar y, \bar z)\setminus \{0\}$ with $d^{12}\neq0$, we have $d^5_j=0$ for $j\in \theta^1$ and $d^6_j=0$ for $j\in \theta^2$, and $d^7_j=0$ for $j\in \theta^3$. Hence, $d^3_j=c^3_j d^7_j=0$ for $j\in \theta^3$. Also note from the table above, $d^3_j=0$ for $j\in \nu^3$ and by definition \eqref{cone of feasible},  $d^3_j=0$ for $j\in \eta^3$. Inserting these values in \eqref{p11}--\eqref{p13}, it holds that
 \begin{eqnarray}
  \sum^m_{i=1}d^4_i \nabla_1 \left(\nabla_{2_i}\bar\ell\right) + \sum_{j\in \nu^1}d^5_j\nabla_1 G_j + \sum_{j\in \nu^2}d^6_j\nabla_1 g_j &=&0,\label{p111}\\
   \sum^m_{i=1}d^4_i \nabla_2 \left(\nabla_{2_i}\bar\ell\right) + \sum_{j\in \nu^1}d^5_j\nabla_2 G_j + \sum_{j\in \nu^2}d^6_j\nabla_2 g_j &=&0,\label{p122}\\
    -\nabla_2 gd^4+  d^7&=&0, \label{p133}
\end{eqnarray}
while  considering the table above. Observe that \eqref{p111} and \eqref{p122} lead to
 \begin{eqnarray}
   \sum^m_{i=1}d^4_i \nabla_{1, 2} \left(\nabla_{2_i}\bar\ell\right) + \sum_{j\in \nu^1}d^5_j\nabla  G_j + \sum_{j\in \nu^2}d^6_j\nabla  g_j &=&0.
\end{eqnarray}
Assumption (i) implies that the family $\left\{\left.\nabla_{1, 2}\left(\nabla_{2_i}\bar\ell\right)\right|~i=1, \ldots, m\right\} \cup \left\{\left.\nabla  G_j\right|~j\in \nu^1\right\} \cup \left\{\left.\nabla  g_j\right|~j\in \nu^2 \right\}$ is linearly independent given to $\nu^1\subseteq I^1$ and $\nu^2\subseteq I^2$. This suffices to ensure that $d^4=0$, $d^5_j=0$, $j\in \nu^1$, and $d^6_j=0$, $j\in \nu^2$, and hence $d^7_j=0$, $j\in \nu^3$ by \eqref{p133}, which concludes the proof as we have shown that all the components of the vector $d$ are zero.
\end{proof}

We impose $d^{12}\neq0$ in assumption (ii) because we automatically have
$$
(d^{123})^{\top} \nabla^2\mathcal{L}_1^{\lambda_1}(\bar\zeta)d^{123} =0 \; \mbox{ for any } \; \left(d^1, d^2, d^3\right)\; \mbox{ with }\; d^{12}=0.
$$
Furthermore, one can observe that   assumption (i) is not appropriate for situations where the functions $f$ and $g$ are linear functions,  as $\nabla_{1, 2}\left(\nabla_2 \bar\ell \right)(\bar x,\bar y, \bar z)=0$ in this case. To deal with such a scenario, we propose the following modification of the above result. 

\begin{thm}[CD-regularity of $\Phi^{\lambda_1}$ under a full rank condition]\label{SOSSC-cor 1-0}Let the functions  $F$, $G$, $\nabla f$, and $\nabla g_i$, $i=1, \ldots, q$ be SC$^1$ and let the point $\bar \zeta:=(\bar x, \bar y, \bar z, \bar s, \bar u, \bar v, \bar w)$ satisfy the optimality conditions  \eqref{L-x}--\eqref{CP-w} for some $\lambda_1 >0$.  Then  $\Phi^{\lambda_1}$ is CD-regular at $\bar \zeta$ provided the following conditions hold:
\begin{itemize}
\item[(i)] The family of vectors $\left\{\left.\nabla G_i(\bar x,\bar y)\right|~i\in I^1\right\} \cup  \left\{\left.\nabla g_j(\bar x,\bar y)\right|~j\in I^2\right\}$ is linearly independent;
\item[(ii)]$\nabla_2 g_{\theta^3\cup\eta^3}(\bar x,\bar y)$ has a full column rank, where $\nabla_2 g_{\theta^3\cup\eta^3}(\bar x,\bar y)$ is the submatrix containing rows of $\nabla_2 g(\bar x,\bar y)$ indexed on $\theta^3\cup\eta^3$;
\item[(iii)] For all $\left(d^1, \, d^2, \, d^3\right) \in Q_1(\bar x, \bar y, \bar z)$ with $d^{12} \neq 0$, we have
  \begin{equation*}
 (d^{123})^{\top} \nabla^2\mathcal{L}_1^{\lambda_1}( \bar\zeta)d^{123} >0.
  \end{equation*}
  \end{itemize}
\end{thm}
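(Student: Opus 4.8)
The plan is to transcribe the proof of Theorem \ref{SOSSC-Theorem 1-0} up to the point where the first blocks of a null direction are shown to vanish, and then to replace its single linear-independence argument (which degenerates here, since $\nabla_{1,2}(\nabla_{2_i}\bar\ell)$ may be zero, e.g. for affine $f$ and $g$) by a two-stage argument using the new assumptions (i) and (ii). First I would fix an arbitrary $W^{\lambda_1}\in\partial\Phi^{\lambda_1}(\bar\zeta)$, write it in the block form of Theorem \ref{KKT Jacobian Phi}, and expand $W^{\lambda_1}d=0$ into the system \eqref{p11}--\eqref{p16}, with the index-set description \eqref{KKT ab definition} and \eqref{Pij-1}--\eqref{Pij-4} producing exactly the same table as before. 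Left-multiplying \eqref{p11}, \eqref{p12}, \eqref{p13} by $(d^1)^\top$, $(d^2)^\top$, $(d^3)^\top$ and adding, then using \eqref{p131} and the table, yields the identity \eqref{Quad-term-1} verbatim; this step uses neither assumption (i) nor (ii).

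Next I would exploit assumption (iii) precisely as in Theorem \ref{SOSSC-Theorem 1-0}: since the $(z,z)$-block of $\nabla^2\mathcal{L}_1^{\lambda_1}$ is zero, the quadratic form $(d^{123})^\top\nabla^2\mathcal{L}_1^{\lambda_1}d^{123}$ vanishes when $d^{12}=0$ and is strictly positive for $d^{123}\in Q_1(\bar x,\bar y,\bar z)$ with $d^{12}\neq 0$; as the remaining terms in \eqref{Quad-term-1} are nonnegative, all terms must vanish, forcing $d^{12}=0$ and $d^5_j=0$ $(j\in\theta^1)$, $d^6_j=0$ $(j\in\theta^2)$, $d^7_j=0$ $(j\in\theta^3)$. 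Together with the table ($d^3_j=0$ on $\nu^3$, $d^7_j=0$ on $\eta^3$) and the definition \eqref{cone of feasible} of $Q_1$ ($d^3_j=0$ on $\eta^3$), this yields $d^3=0$ and confines $d^5,d^6,d^7$ to the index sets $\nu^1,\nu^2,\nu^3$. Substituting $d^{12}=0$ and $d^3=0$ into \eqref{p11}--\eqref{p13} then collapses them to \eqref{p111}--\eqref{p133}.

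The genuinely new step is the recovery of $d^4$, which is where assumptions (i) and (ii) take over. Because $d^7$ is supported on $\nu^3$, reading the rows of \eqref{p133} indexed by $\theta^3\cup\eta^3$ gives $\nabla_2 g_{\theta^3\cup\eta^3}(\bar x,\bar y)\,d^4=0$, so the full-column-rank hypothesis (ii) forces $d^4=0$. With $d^4=0$, combining \eqref{p111} and \eqref{p122} leaves $\sum_{j\in\nu^1}d^5_j\nabla G_j+\sum_{j\in\nu^2}d^6_j\nabla g_j=0$, and since $\nu^1\subseteq I^1$ and $\nu^2\subseteq I^2$, the linear-independence hypothesis (i) gives $d^5_j=0$ $(j\in\nu^1)$ and $d^6_j=0$ $(j\in\nu^2)$; finally \eqref{p133} returns $d^7=\nabla_2 g\,d^4=0$. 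Thus $d=0$, so $W^{\lambda_1}$ is nonsingular, and CD-regularity follows because $W^{\lambda_1}$ was arbitrary.

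I expect the only delicate point to be the interface between (i) and (ii): one must check that the $\theta^3\cup\eta^3$ block of \eqref{p133} truly isolates $d^4$ (using that $d^7$ lives on $\nu^3$) so that (ii) can eliminate $d^4$, after which (i) supplies exactly the information needed to kill the surviving multiplier components $d^5|_{\nu^1}$ and $d^6|_{\nu^2}$. Everything preceding this split is a direct transcription of the argument for Theorem \ref{SOSSC-Theorem 1-0}.
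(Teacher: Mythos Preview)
Your proposal is correct and follows essentially the same approach as the paper: you transcribe the proof of Theorem~\ref{SOSSC-Theorem 1-0} up through the reduced system \eqref{p111}--\eqref{p133}, then use the rows of \eqref{p133} indexed by $\theta^3\cup\eta^3$ (where $d^7$ vanishes) together with assumption~(ii) to eliminate $d^4$, after which assumption~(i) and \eqref{p111}--\eqref{p122} kill the remaining multiplier components, exactly as the paper does. Your write-up is in fact slightly more explicit than the paper's in justifying why $d^7$ is supported on $\nu^3$ before invoking the full-column-rank condition.
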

\begin{proof} The proof follows on the lines of that of Theorem \ref{SOSSC-Theorem 1-0} till \eqref{p111}--\eqref{p133} and from the last of these equations, we have
$$
-\nabla_2 g_j(\bar x, \bar z)^\top d^4 =0 \; \mbox{ for } \; j\in \theta^3\cup\eta^3 \; \mbox{ and } \; -\nabla_2 g_j(\bar x, \bar z)^\top d^4 +d_j^7=0 \; \mbox{ for } \; j\in\nu ^3.
$$
If the matrix $\nabla_2 g_{\theta^3\cup\eta^3}(\bar x,\bar y)$ has full column rank, then it holds that $d^4=0$ and hence $d_j^7=0,~j\in\nu ^3$.   This together with \eqref{p111} and \eqref{p122} leads to
$$
\sum_{j\in \nu^1}d^5_j\nabla G_j(\bar x, \bar y) + \sum_{j\in \nu^2}d^6_j\nabla g_j(\bar x, \bar y) = 0.
$$
Then considering assumption (i), the remaining part of the proof follows.
\end{proof}

Recall that the index sets $\eta^i$, $\nu^i$ and $\theta^i$ with $i=1, 2, 4$ that we used here are defined in \eqref{multiplier sets} and \eqref{nu2nu3}.  The following two results are the counterparts of Theorems \ref{KKT Jacobian Phi} and \ref{SOSSC-Theorem 1-0} in the context of problem \eqref{eq:LLVFR} developed in \cite{FischerZemZhou2019}.

\begin{thm}[estimate of the generalized Jacobian of $\Phi_{2}^{\lambda_2}$]\label{VF Jacobian Phi} Let the functions $F$, $G$, $f$, and $g$ be continuously differentiable at $\bar\zeta:=(\bar x,\bar y, \bar{z},  \bar u,\bar v,\bar w)$. For any $\lambda_2>0$, the function $\Phi_{2}^{\lambda_2}$ is semismooth at $\bar \zeta$ and any matrix $W^{\lambda_2}\in \partial \Phi_{2}^{\lambda_2}(\bar \zeta)$ can take the form
\begin{equation*}
W^{\lambda_2} =\left[
\begin{array}{cccccc}
\nabla^2_{11} \mathcal{L}_{2}^{\lambda_2}(\bar\zeta) & \nabla^2_{12} \mathcal{L}_{2}^{\lambda_2}(\bar\zeta)^\top   & \nabla^2_{13} \mathcal{L}_{2}^{\lambda_2}(\bar\zeta)^\top  & \nabla_1  G(\bar x,\bar y)^{\top} & \nabla_1  g(\bar x,\bar y)^{\top}&-\lambda_2 \nabla_1  g(\bar x,\bar z)^{\top}\\
\nabla^2_{12} \mathcal{L}_{2}^{\lambda_2}(\bar\zeta) &  \nabla^2_{22} \mathcal{L}_{2}^{\lambda_2}(\bar\zeta) & O&  \nabla_2  G(\bar x,\bar y)^{\top} & \nabla_2 g(\bar x,\bar y)^{\top}&O\\
 \nabla^2_{13} \mathcal{L}_{2}^{\lambda_2}(\bar\zeta)   &    O  & \nabla^2_{33} \mathcal{L}_{2}^{\lambda_2}(\bar\zeta) &   O & O & -\lambda_2 \nabla_{2}  g(\bar x,\bar{z})^{\top}\\
  \Lambda_1 \nabla_{1} G(\bar x,\bar y) &    \Lambda_1 \nabla_{2} G(\bar x,\bar y)   & O & \Gamma_1& O & O\\
    \Lambda_2 \nabla_{1} g(\bar x,\bar y) &    \Lambda_2 \nabla_{2} g(\bar x,\bar y)   & O & O&  \Gamma_2& O\\
       \Lambda_4 \nabla_{1} g(\bar x,\bar{z})& O    &  \Lambda_4\nabla_{2} g(\bar x,\bar{z})  & O& O & \Gamma_4
\end{array}
\right]
\end{equation*}
with  $\Lambda_i :={\rm diag} (a^i)$ and $\Gamma_i :={\rm diag}(b^i)$, $i=1, 2, 4$ defined in a way similar to \eqref{KKT ab definition}.
\end{thm}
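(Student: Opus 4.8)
The plan is to exploit the block structure of $\Phi_2^{\lambda_2}$ in \eqref{VF-Phi-lambda}, which splits into one smooth block, the gradient $\nabla\mathcal{L}_2^{\lambda_2}$ of the Lagrangian \eqref{VF-Lagrangian}, and three nonsmooth blocks built from the Fischer--Burmeister function \eqref{FB}. First I would record the semismoothness claim: each scalar component $\fb$ is strongly semismooth, and, under the assumed smoothness, the gradient block $\nabla\mathcal{L}_2^{\lambda_2}$ together with the arguments $-G(x,y)$, $-g(x,y)$, $-g(x,z)$, $u$, $v$, $w$ are semismooth in $\zeta^2$; since semismoothness is preserved under sums and composition with continuously differentiable maps, $\Phi_2^{\lambda_2}$ is semismooth at $\bar\zeta$, in line with Theorem \ref{cond-check}(ii). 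Because Clarke's generalized Jacobian of a map whose nonsmooth components act on pairwise-disjoint groups of coordinates is the blockwise product of the component generalized gradients, any $W^{\lambda_2}\in\partial\Phi_2^{\lambda_2}(\bar\zeta)$ is obtained by taking the classical Jacobian of the smooth block and one selection from the generalized gradient of each $\psifb$ block.

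Next I would compute the smooth block by differentiating $\nabla\mathcal{L}_2^{\lambda_2}$ in $\zeta^2$. Differentiation in $(x,y,z)$ produces the Hessian $\nabla^2\mathcal{L}_2^{\lambda_2}$; the $(y,z)$ cross term vanishes because the only $z$-dependent part of the Lagrangian, $-\lambda_2[f(x,z)+w^\top g(x,z)]$, carries no $y$, which explains the two $O$ entries in the upper-left block. Differentiation in $(u,v,w)$ returns the transposed constraint Jacobians: $u$ contributes $[\nabla_1 G^\top;\,\nabla_2 G^\top;\,O]$, $v$ contributes $[\nabla_1 g(\bar x,\bar y)^\top;\,\nabla_2 g(\bar x,\bar y)^\top;\,O]$, and $w$ contributes $[-\lambda_2\nabla_1 g(\bar x,\bar z)^\top;\,O;\,-\lambda_2\nabla_2 g(\bar x,\bar z)^\top]$, the middle entry being $O$ since $g(\bar x,\bar z)$ is independent of $y$. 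This matches the top three rows of $W^{\lambda_2}$.

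For the three lower blocks I would apply the chain rule to $\psifb(-G(\bar x,\bar y),\bar u)$, $\psifb(-g(\bar x,\bar y),\bar v)$, and $\psifb(-g(\bar x,\bar z),\bar w)$ using the generalized gradient of $\fb$. Writing the first argument as $-G_i$ (resp. $-g_i$) and the second as the corresponding multiplier, a direct computation shows that $\partial\fb$ at a component gives derivative coefficients $(a^i_j,b^i_j)$ with respect to the constraint value and the multiplier that are exactly those in \eqref{KKT ab definition}: on the strictly inactive set $\eta^i$ one gets $(0,-1)$, on the strictly active set $\nu^i$ one gets $(1,0)$, and on the biactive set $\theta^i$ the generalized gradient of $\fb$ at the origin (a translated unit disk) maps under the sign convention to $\{(\alpha,\beta):(\alpha-1)^2+(\beta+1)^2\le 1\}$. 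Collecting these into $\Lambda_i={\rm diag}(a^i)$ and $\Gamma_i={\rm diag}(b^i)$ and multiplying by the corresponding constraint Jacobians yields the last three rows, with $\Lambda_4\nabla_1 g(\bar x,\bar z)$ in the $x$-column, $O$ in the $y$-column, and $\Lambda_4\nabla_2 g(\bar x,\bar z)$ in the $z$-column, reflecting again the $y$-independence of $g(\bar x,\bar z)$.

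The main obstacle is the biactive case $\theta^i$, where $\fb$ fails to be differentiable: here one must verify that the generalized gradient of $\fb$ at the origin, composed with the negative constraint arguments, is precisely the disk in \eqref{KKT ab definition}, and must justify that the generalized Jacobian of the whole map equals the blockwise product of the individual generalized gradients rather than being merely contained in it. The latter relies on the decoupled structure of the three complementarity blocks, each acting on a distinct multiplier, together with the smoothness of the $\nabla\mathcal{L}_2^{\lambda_2}$ block; once this separation is in place, the remaining steps are routine applications of the product and chain rules.
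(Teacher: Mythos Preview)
Your approach is correct and is essentially the standard one; note, however, that the paper does not give its own proof of this result but merely cites \cite{FischerZemZhou2019}, so there is no ``paper's proof'' to compare against in detail. The argument you sketch---semismoothness via composition, the row-wise inclusion $\partial\Phi_2^{\lambda_2}(\bar\zeta)\subseteq\prod_i\partial(\Phi_2^{\lambda_2})_i(\bar\zeta)$ from Clarke, explicit differentiation of the smooth block $\nabla\mathcal{L}_2^{\lambda_2}$, and the well-known description of $\partial\fb$---is exactly the template used for such results (and for the companion Theorem~\ref{KKT Jacobian Phi}).

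One clarification: in your last paragraph you worry about establishing \emph{equality} between $\partial\Phi_2^{\lambda_2}$ and the blockwise product. This is not needed. The theorem, as stated, asserts only that every $W^{\lambda_2}\in\partial\Phi_2^{\lambda_2}(\bar\zeta)$ has the displayed form with some admissible $(a^i_j,b^i_j)$; i.e., it is an \emph{upper} estimate. For this, Clarke's row-wise inclusion (Proposition~2.6.2 in \cite{ClarkeBook1983}) already suffices, and no decoupling argument is required. Also be aware that the hypothesis ``continuously differentiable'' in the statement is slightly weaker than what is actually used to make $\nabla\mathcal{L}_2^{\lambda_2}$ semismooth; in practice one works under the SC$^1$ (or $C^2$) assumptions of Theorem~\ref{cond-check}(ii), as you implicitly do.
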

In the next result, we provide conditions ensuring that the function $\Phi^{\lambda_2}$ is CD-regular. To proceed, let us introduce the cone of feasible directions for problem \eqref{Penalized-LLVF},
\begin{equation*}\label{Ponana}
Q_2(\bar x, \bar y, \bar z)   := \left\{\left(d^1, \, d^2, \, d^3\right)\in\mathbb{R}^{n+m+m}~\left|
\begin{array}{rl}
 \nabla G_i(\bar x, \bar y)^\top d^{12}=0, & i\in \nu^1 \\
\nabla g_j(\bar x, \bar y)^\top d^{12}=0, & j\in \nu^2 \\
\nabla g_j(\bar x, \bar z)^\top d^{13}=0, & j\in \nu^4
\end{array}
\right.\right\}.
\end{equation*}
\begin{thm}[CD-regularity of $\Phi^{\lambda_2}$]\label{SOSSC-Theorem 1-1}{Let the functions  $F$, $G$, $f$, and $g$ are SC$^1$ and assume the point $\bar \zeta:=(\bar x, \bar y, \bar z, \bar u, \bar v, \bar w)$ satisfy the optimality conditions \eqref{KS-1}--\eqref{KS-3} for some $\lambda_2 >0$. Then  $\Phi^{\lambda_2}$ is CD-regular at $\bar \zeta$ provided that the following conditions hold:
\begin{itemize}
\item[(i)] $\left\{\left.\nabla G_i(\bar x,\bar y)\right|\; i\in I^1\right\} \cup  \left\{\left.\nabla g_j(\bar x,\bar y)\right|\; j\in I^2\right\}$ linearly independent as well as $\left\{\left.\nabla g_j(\bar x,\bar z)\right|~j\in I^4\right\}$;
\item[(ii)] $\theta^4 =\theta^g(\bar x,\bar z, \bar w)=\left\{j~|~\bar w_j=0, \; g_j(\bar x,\bar z)=0\right\}=\emptyset$;
\item[(iii)] For all $\left(d^1, \, d^2, \, d^3\right)\in Q_2(\bar x, \bar y, \bar z)\setminus \{0\}$, we have
  \begin{equation}\label{SOSSC-LLVFR}
  \begin{array}{l}
 (d^{123})^{\top} \nabla^2\mathcal{L}_2^{\lambda_2}(\bar \zeta) d^{123} > 0.
  \end{array}
  \end{equation}
  \end{itemize}
}
\end{thm}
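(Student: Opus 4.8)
The plan is to prove CD-regularity straight from its definition: I take an arbitrary $W^{\lambda_2}\in\partial\Phi^{\lambda_2}_2(\bar\zeta)$, which by Theorem \ref{VF Jacobian Phi} has the displayed block form with diagonal matrices $\Lambda_i,\Gamma_i$ ($i=1,2,4$) built from \eqref{KKT ab definition}, and show that $W^{\lambda_2}d=0$ forces $d=0$. Writing $d=(d^1,d^2,d^3,d^4,d^5,d^6)$ with $d^{1}\in\mathbb{R}^n$, $d^{2},d^{3}\in\mathbb{R}^m$, $d^4\in\mathbb{R}^p$, and $d^5,d^6\in\mathbb{R}^q$, the equation $W^{\lambda_2}d=0$ splits into three ``gradient'' rows (the derivatives of $\nabla\mathcal{L}^{\lambda_2}_2$ in $x$, $y$, $z$) and three ``complementarity'' rows coming from the Fischer--Burmeister blocks $\psifb(-G,u)$, $\psifb(-g(\cdot,\bar y),v)$, $\psifb(-g(\cdot,\bar z),w)$. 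The whole argument mirrors the proof of Theorem \ref{SOSSC-Theorem 1-0}, with the $g(\bar x,\bar z)$-block playing the role of the extra complementarity system.

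From \eqref{KKT ab definition} the complementarity rows translate, index by index, into the same table used there: on $\eta^i$ the corresponding multiplier vanishes, on $\nu^i$ the gradient is annihilated by the direction, and on $\theta^i$ one has $\nabla(\cdot)^\top d=c^i\,d_\bullet$ with $c^i>0$. The key simplification is assumption (ii): since $\theta^4=\emptyset$, the third complementarity block offers only the clean $\eta^4/\nu^4$ alternatives. I then contract the three gradient rows on the left by $(d^1)^\top,(d^2)^\top,(d^3)^\top$ and add them; the Hessian blocks assemble into $(d^{123})^\top\nabla^2\mathcal{L}^{\lambda_2}_2(\bar\zeta)d^{123}$ (using that the $yz$-block of $\nabla^2\mathcal{L}^{\lambda_2}_2$ is $O$), while the constraint blocks produce $\sum_j d^4_j\nabla G_j^\top d^{12}+\sum_j d^5_j\nabla g_j^\top d^{12}-\lambda_2\sum_j d^6_j\nabla g_j(\bar x,\bar z)^\top d^{13}$. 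Feeding in the table turns the first two sums into $\sum_{\theta^1}c^1_j(d^4_j)^2+\sum_{\theta^2}c^2_j(d^5_j)^2\ge0$, and, crucially, the $g(\bar x,\bar z)$ term vanishes identically because $\theta^4=\emptyset$ (for $j\in\eta^4$ we have $d^6_j=0$, and for $j\in\nu^4$ we have $\nabla g_j(\bar x,\bar z)^\top d^{13}=0$). This yields
\begin{equation*}
(d^{123})^\top\nabla^2\mathcal{L}^{\lambda_2}_2(\bar\zeta)d^{123}+\sum_{j\in\theta^1}c^1_j(d^4_j)^2+\sum_{j\in\theta^2}c^2_j(d^5_j)^2=0 .
\end{equation*}
I would emphasize that $\theta^4=\emptyset$ is exactly what makes this work: a nonempty $\theta^4$ would inject a term $-\lambda_2\sum_{\theta^4}c^4_j(d^6_j)^2$ of the wrong sign and destroy the identity.

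The table also shows $(d^1,d^2,d^3)\in Q_2(\bar x,\bar y,\bar z)$, since its $\nu^1,\nu^2,\nu^4$ rows are precisely the three defining equalities of $Q_2$. In the identity above the last two sums are nonnegative, so the quadratic term is nonpositive; by assumption (iii) this is impossible unless $d^{123}=0$, and then the two nonnegative sums must also vanish, giving $d^4_j=0$ on $\theta^1$ and $d^5_j=0$ on $\theta^2$. Returning to the complementarity rows with $d^{12}=d^{13}=0$ leaves $\Gamma_1 d^4=0$, $\Gamma_2 d^5=0$, $\Gamma_4 d^6=0$; since $b^i_j\neq0$ exactly off $\nu^i$ (here using $\theta^4=\emptyset$ for $i=4$), this localizes $d^4,d^5,d^6$ to $\nu^1,\nu^2,\nu^4$, respectively.

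It remains to kill these surviving components, and this is where I expect the main difficulty. Substituting $d^{123}=0$ into the three gradient rows produces a single relation stating that a linear combination of active gradients vanishes: the $y$-block involves only $\{\nabla_2 G_j\}_{\nu^1}$ and $\{\nabla_2 g_j(\bar x,\bar y)\}_{\nu^2}$, the $z$-block only $\{\nabla_2 g_j(\bar x,\bar z)\}_{\nu^4}$, and the $x$-block couples all three through the shared variable $x$. The plan is to read these three blocks as one vanishing linear combination of the gradient families lifted to $\mathbb{R}^{n+m+m}$ — $\nabla G_j,\nabla g_j(\bar x,\bar y)$ placed in the $(x,y)$ slots and $\nabla g_j(\bar x,\bar z)$ in the $(x,z)$ slots — and then to invoke the linear independence in assumption (i) to conclude $d^4=d^5=d^6=0$, which finishes the proof. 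The delicate point, and the step I would verify most carefully, is precisely the interaction of the two constraint systems through $x$: because $g(\cdot,\bar y)$ and $g(\cdot,\bar z)$ overlap only in their $x$-components, one must confirm that the independence asserted in (i) genuinely decouples the $\nu^4$-multipliers from the $\nu^1,\nu^2$-multipliers, rather than merely guaranteeing independence of each family in isolation.
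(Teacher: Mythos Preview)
The paper does not actually give a proof of this result; it is attributed to \cite{FischerZemZhou2019}. Your approach is the natural analogue of the proof of Theorem~\ref{SOSSC-Theorem 1-0} that \emph{is} in the paper, and everything up to and including the identity
\[
(d^{123})^\top\nabla^2\mathcal{L}^{\lambda_2}_2(\bar\zeta)\,d^{123}
+\sum_{j\in\theta^1}c^1_j(d^4_j)^2
+\sum_{j\in\theta^2}c^2_j(d^5_j)^2=0,
\]
together with the deduction $d^{123}=0$, is correct --- in particular your diagnosis that assumption~(ii) ($\theta^4=\emptyset$) is precisely what prevents a wrong-signed $-\lambda_2\sum_{\theta^4}c^4_j(d^6_j)^2$ term from appearing.

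Your caution at the final step is not mere prudence; it is a genuine gap in the argument as you have set it up. After $d^{123}=0$ the three gradient rows reduce to
\begin{align*}
\textstyle\sum_{\nu^1}d^4_j\nabla_1 G_j+\sum_{\nu^2}d^5_j\nabla_1 g_j(\bar x,\bar y)-\lambda_2\sum_{\nu^4}d^6_j\nabla_1 g_j(\bar x,\bar z)&=0,\\
\textstyle\sum_{\nu^1}d^4_j\nabla_2 G_j+\sum_{\nu^2}d^5_j\nabla_2 g_j(\bar x,\bar y)&=0,\\
\textstyle-\lambda_2\sum_{\nu^4}d^6_j\nabla_2 g_j(\bar x,\bar z)&=0.
\end{align*}
Assumption~(i) furnishes two \emph{separate} linear-independence statements in $\mathbb{R}^{n+m}$, but to kill $d^6$ from the third line you need linear independence of $\{\nabla_2 g_j(\bar x,\bar z)\}_{j\in I^4}$ alone, which is strictly stronger than linear independence of the full gradients $\{\nabla g_j(\bar x,\bar z)\}_{j\in I^4}$. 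Your lift to $\mathbb{R}^{n+m+m}$ does not help: projecting the lifted relation onto the last $m$ coordinates simply reproduces the third line above, about which (i) says nothing. Concretely, if some nontrivial combination $\sum_{\nu^4}d^6_j\nabla_2 g_j(\bar x,\bar z)=0$ exists, its $x$-part $\sum_{\nu^4}d^6_j\nabla_1 g_j(\bar x,\bar z)$ can be absorbed in the first line by suitable $d^4,d^5$ without contradicting the first half of (i). So with (i) exactly as stated, the argument does not close; one needs either the LLICQ at $(\bar x,\bar z)$ (i.e.\ $\{\nabla_2 g_j(\bar x,\bar z)\}_{j\in I^4}$ linearly independent) or linear independence of the three lifted families jointly in $\mathbb{R}^{n+2m}$.
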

We are now going to present some examples to illustrate the conditions in Theorems \ref{SOSSC-Theorem 1-0},  \ref{SOSSC-cor 1-0}, and   \ref{SOSSC-Theorem 1-1} and the fact that the set of assumptions required to ensure the convergence of Algorithm \ref{algorithm 1} in the context of \eqref{eq:KKTR} and \eqref{eq:LLVFR}, respectively, are not necessarily related to each other. 
\begin{exmp}[the sufficient conditions for CD-regularity hold for both $\Phi^{\lambda_1}$ and $\Phi^{\lambda_2}$] Consider an example of problem \eqref{eq:P} from \cite{SC98} with the data
$$
F(x,y) := (x-3)^2 +(y-2)^2, \;\, G(x,y) := \left[\begin{array}{c}
                                    - x\\
  							  x -8
                                  \end{array}\right], \;\;
f(x,y) := (y-5)^2, \;\, g(x,y):= \left[\begin{array}{c}
                                    -2x +y -1\\
                        x -2y +2\\
                        x +2y -14
                                  \end{array}\right].
$$
As the global optimal solution of the problem is $\bar x=1, \; \bar y=3$, let
\begin{eqnarray*}\bar \zeta_1& := &\Big(\underset{\bar x}{\underbrace{1}},\underset{\bar y}{\underbrace{3}},\underset{\bar z}{\underbrace{-4,~0,~0}},~\underset{\bar s}{\underbrace{0}},~\underset{\bar u}{\underbrace{0,~0}},~\underset{\bar v}{\underbrace{62,~0,~0}},~\underset{\bar w}{\underbrace{0,~48,~112}}~\Big)^\top,\\
\bar \zeta_2& :=&\Big(\underset{\bar x}{\underbrace{1}},\underset{\bar y}{\underbrace{3}},\underset{\bar z}{\underbrace{3}},~\underset{\bar u}{\underbrace{0,~0}},~\underset{\bar v}{\underbrace{6,~0,~0}},~\underset{\bar w}{\underbrace{4,~0,~0}}~\Big)^\top.
\end{eqnarray*}
Direct calculations show that $\bar \zeta_1$ and $\bar \zeta_2$ satisfy \eqref{L-x}-\eqref{CP-w} with $\lambda_1=16$ and \eqref{KS-1}-\eqref{KS-3} with $\lambda_2=2$, respectively. In addition, we have $I^1=\emptyset$, $I^2=I^4=\{1\}$, $I^3=\{2,3\}$, $\nu^1=\emptyset$, $\nu^2=\nu^4=\{1\}$, $\nu^3=\{2,3\}$, $\theta^3\cup\eta^3=\{1\}$, and $\theta^4=\emptyset$. 
 One can easily check that
\[
\left\{\left.\nabla g_j(\bar x,\bar y)\right| j\in I^2\right\}= \left\{\left.\nabla g_j(\bar x,\bar z)\right| j\in I^4\right\}=\left[\begin{array}{r}
 -2\\
 1
  \end{array}\right],\;\,
  \left\{\nabla_{1, 2}\left(\nabla_2\bar\ell\right)(\bar x,\bar y, \bar z)^\top\right\}=\left[\begin{array}{r}
0\\
 2
  \end{array}\right],\;\,
  \nabla_2 g_{\theta^3\cup\eta^3}(\bar x,\bar y)=1.
\]
Hence, condition (i) in Theorem $\ref{SOSSC-Theorem 1-0}$, (i)-- ii) in Theorem $\ref{SOSSC-cor 1-0}$, and (i)-- (ii) in Theorem $\ref{SOSSC-Theorem 1-1}$ hold. Moreover,

\[
Q_1(\bar x, \bar y, \bar z) = \left\{\left.\left(
d^1,\, 2d^1,\, d^3_1, \,0, \,0\right)^\top~\right|~ d^1,\, d_1^3\in\mathbb{R}
\right\}, \;\;
Q_2(\bar x, \bar y, \bar z) = \left\{\left.\left(
d^1, \, 2d^1,\, 2d^1\right)^\top~\right|~ d^1\in\mathbb{R}\right\},
\]
\[
 \nabla^2\mathcal{L}_1^{\lambda_1} (\bar\zeta_1)
=\left[\begin{array}{rrrrr}
2 &    0 &  -32 &   16 &   16\\
     0  &   2  &  16 &  -32&    32\\
   -32 &   16   &  0  &   0  &   0\\
    16&   -32  &   0  &   0  &   0 \\
    16 &   32   &  0  &   0  &   0
  \end{array}\right]\;\mbox{ and }\;
 \nabla^2\mathcal{L}_2^{\lambda_2} (\bar\zeta_2)
=\left[\begin{array}{rrr}
 2 &0 &  -4\\
0  &  2 &    2\\
   -4  & 2   &  0
    \end{array}\right].
\]
Hence,  $(d^{123})^{\top} \nabla^2\mathcal{L}_1^{\lambda_1}( \bar\zeta)d^{123}=10(d^1)^2>0$ for any $\left(d^1, d^2, d^3\right)\in Q_1(\bar x, \bar y, \bar z)\setminus \{0\}$ with $ d^{12}\neq 0$ and $(d^{123})^{\top} \nabla^2\mathcal{L}_2^{\lambda_2}( \bar\zeta_2)d^{123}=10(d^1)^2>0$ for any $\left(d^1, d^2, d^3\right)\in Q_2(\bar x, \bar y, \bar z)\setminus \{0\}$.
Overall, the conditions in Theorems $\ref{SOSSC-Theorem 1-0}$,  $\ref{SOSSC-cor 1-0}$, and  $\ref{SOSSC-Theorem 1-1}$  all hold; thus $\Phi^{\lambda_1}$ and $\Phi^{\lambda_2}$ are CD-regular at $\bar \zeta_1$ and $\bar \zeta_2$, respectively.
\end{exmp}

\begin{exmp}[the sufficient conditions for CD-regularity hold for $\Phi^{\lambda_1}$ but the ones for $\Phi^{\lambda_2}$ fail] Consider the example of problem \eqref{eq:P} with
$$
F(x,y) := x + y_2, \;\; G(x,y) := \left[\begin{array}{c}
                                    -x+2\\
                                      x-4
                                  \end{array}\right], \;\;
f(x,y) := 2y_1 + xy_2, \;\; g(x,y):= \left[\begin{array}{c}
                                   x - y_1 - y_2+4\\
                                     -y_1\\
                                     -y_2
                                  \end{array}\right]
$$
taken from \cite{Bard91}.
The unique optimal solution being $(\bar x, \bar y)$ with $\bar x=2$ and $\bar y=(6,\, 0)^\top$, let
\begin{eqnarray*}\bar \zeta_1&:=&\Big(\underset{\bar x}{\underbrace{2}},~\underset{\bar y}{\underbrace{6,~0}},~\underset{\bar z}{\underbrace{-2,~0,~0}},~\underset{\bar s}{\underbrace{0.0077,~-0.0077}},~\underset{\bar u}{\underbrace{0.9923,~0}},~\underset{\bar v}{\underbrace{2,~0,~1}},~\underset{\bar w}{\underbrace{0,~5.9923,~0.0077}}~\Big)^\top,\\
\bar \zeta_2&:=&\Big(\underset{\bar x}{\underbrace{2}},~\underset{\bar y}{\underbrace{6,~0}},~\underset{\bar z}{\underbrace{5.5207,~0.4793}},~\underset{\bar u}{\underbrace{0.0415,~0}},~\underset{\bar v}{\underbrace{4,~0,~1}},~\underset{\bar w}{\underbrace{2,~0,~0}}~\Big)^\top.
\end{eqnarray*}
One can verify that $\bar \zeta_1$ and $\bar \zeta_2$ satisfy \eqref{L-x}--\eqref{CP-w} with $\lambda_1=1$ and \eqref{KS-1}--\eqref{KS-3} with $\lambda_2=2$, respectively. Furthermore, as $I^1=\{1\}$, $I^2=\{1,\, 3\}$, $I^3=\{2,\,3\}$, $I^4=\{1\}$, $\nu^1=\{1\}$, $\nu^2=\{1,\,3\}$, $\nu^3=\{2,\,3\}$, $\nu^4=\{1\}$, $\theta^3\cup\eta^3=\{1\}$, and $\theta^4 = \emptyset$, one can quickly check that
\begin{eqnarray*}
\left\{\left.\nabla G_i(\bar x,\bar y)\right|\, i\in I^1\right\} \cup \left\{\left.\nabla g_j(\bar x,\bar y)\right|\, j\in I^2\right\} = \left[\begin{array}{rrr}
 -1&1&0\\
 0&-1&0\\
 0&-1&-1
  \end{array}\right],\;\;\, \left\{\left.\nabla g_j(\bar x,\bar z)\right|\, j\in I^4\right\} = \left[\begin{array}{r}
 1\\
 -1\\
 -1
  \end{array}\right], \\
  \nabla_{1, 2} (\nabla_2 \bar\ell)(\bar x, \bar y, \bar z)^\top = \left[\begin{array}{rr}
 0&1 \\
 0&0\\
 0&0
  \end{array}\right], \;\; \nabla_2 g_{\theta^3\cup\eta^3}(\bar x,\bar y)=\left[\begin{array}{rr}
 1 &-1
  \end{array}\right],
\end{eqnarray*}
which imply that the conditions (i) in Theorem $\ref{SOSSC-Theorem 1-0}$ and  (ii) in Theorem $\ref{SOSSC-cor 1-0}$ do not hold but  (i)--(ii) in Theorem $\ref{SOSSC-Theorem 1-1}$  are satisfied. Moreover, as
$
Q_2(\bar x, \bar y, \bar z)=\{0\},
$
it follows that $\Phi^{\lambda_2}$ is CD-regular at $\bar \zeta_2$.
\end{exmp}

To conclude this section, we would like to point out the analogy between the assumptions in Theorems \ref{SOSSC-Theorem 1-0}, \ref{SOSSC-cor 1-0}, and \ref{SOSSC-Theorem 1-1} with corresponding conditions ensuring the convergence of the semismooth Newton method in a standard nonlinear optimization problem \cite{QiJiangSemismooth1997}. An interesting point though is that the corresponding conditions in the context of standard nonlinear optimization also guaranty that a stationarity point satisfying them is locally optimal; cf. latter reference. This is unfortunately not the case for the bilevel optimization problem. In the next example, we show that assumptions  Theorems \ref{SOSSC-Theorem 1-0}, \ref{SOSSC-cor 1-0}, and \ref{SOSSC-Theorem 1-1} can all fail at a stationary point, which corresponds to a locally optimal solution of a given bilevel program.

\begin{exmp}[a point is locally optimal while the sufficient conditions for convergence for Algorithm \ref{algorithm 1} fail] Considering the problem in  Example $\ref{ex-1}$ again, but with
\begin{eqnarray*}\bar \zeta_1&:=&\Big(\underset{\bar x}{\underbrace{0.5}},~\underset{\bar y}{\underbrace{0,~0.5}},~\underset{\bar z}{\underbrace{0,~-1,~0}},~\underset{\bar s}{\underbrace{0,-0.0061}},~\underset{\bar u}{\underbrace{0}},~\underset{\bar v}{\underbrace{1,~1,~0}},~\underset{\bar w}{\underbrace{0.0061,~0,~0.5061}}~\Big)^\top,\\
\bar \zeta_2&:=&\Big(\underset{\bar x}{\underbrace{0.5}},~\underset{\bar y}{\underbrace{0,~0.5}},~\underset{\bar z}{\underbrace{0,~0.5}},~\underset{\bar u}{\underbrace{0}},~\underset{\bar v}{\underbrace{1,~1,~0}},~\underset{\bar w}{\underbrace{0,~1,~0}}~\Big)^\top.
\end{eqnarray*}
 Direct calculations show that $\bar \zeta_1$ and $\bar \zeta_2$ satisfy \eqref{L-x}--\eqref{CP-w} with $\lambda_1=1$ and \eqref{KS-1}--\eqref{KS-3} with $\lambda_2=1$, respectively. In addition, $I^1=\{1\}$, $I^2=I^4=\{1,2\}$, $I^3=\{1,3\}$, $\nu^1=\emptyset$, $\nu^2=\{1,2\}$, $\nu^3=\{1,3\}$, $\nu^4=\{2\}$, $\theta^3\cup\eta^3=\{2\}$. Since $\left\{\nabla_{1, 2} \left(\nabla_2\bar\ell\right)(\bar x,\bar y, \bar z)^\top\right\}=0$,  $\nabla_2 g_{\theta^3\cup\eta^3}(\bar x,\bar y)=\left[-1 ~0~\right]$, and $\theta^4=\{1\}\neq\emptyset$,  conditions (i) in Theorem $\ref{SOSSC-Theorem 1-0}$, (ii) in Theorem $\ref{SOSSC-cor 1-0}$, and (ii) in  Theorem $\ref{SOSSC-Theorem 1-1}$ all fail.
 \end{exmp}
\begin{table}[ht]
\begin{center}
\begin{tabular}{l|l|c|l}
\hline
                   & \textbf{Needed for \eqref{eq:KKTR}}&                                            &  \textbf{ Needed for \eqref{eq:LLVFR}}      \\
\hline
   Computation of $\left.\partial\Phi^{\lambda_1}_{1}\right/\partial\Phi^{\lambda_2}_{2}$ &  $F$, $G$ are $\mathcal{C}^2$; $f$, $g$ are  $\mathcal{C}^3$ & $\Longrightarrow$  & $F$, $G$, $f$, and $g$  are $\mathcal{C}^2$  \\
\hline
    SC$^1$ of $\left.\Phi^{\lambda_1}_{1}\right/\Phi^{\lambda_2}_{2}$ &  $F$, $G$, $\nabla f$, and $\nabla g$  are SC$^1$ & $\Longrightarrow$  & $F$, $G$, $f$, and $g$  are SC$^1$  \\
\hline
    LC$^2$ of $\left.\Phi^{\lambda_1}_{1}\right/\Phi^{\lambda_2}_{2}$ &  $F$, $G$, $\nabla f$, and $\nabla g$  are LC$^2$ & $\Longrightarrow$  & $F$, $G$, $f$, and $g$  are LC$^2$  \\
\hline
\multirow{2}{*}{CD-regularity of $\left.\Phi^{\lambda_1}_{1}\right/\Phi^{\lambda_2}_{2}$}
& KKT-LICQ     &   &  LLVF-LICQ   \\
 & KKT-SSOSC &   &    LLVF-SSOSC         \\\hline
 \multirow{1}{*}{\textbf{$\sharp$ variables/equations in \eqref{KKT-Phi-lambda}/\eqref{VF-Phi-lambda}}}
& $n + 2m+ p + 3q$     &   &  $n+2m+p +2q$   \\
\hline
\end{tabular}
\end{center}
\caption{KKT-LICQ and KKT-SSOSC represent  (i) and (ii), respectively, in Theorem \ref{SOSSC-Theorem 1-0} or (i)--(ii) and (iii), respectively, in Theorem \ref{SOSSC-cor 1-0}. Similarly,  LLVF-LICQ and LLVF-SSOSC  correspond to (i) and (ii), respectively, in Theorem \ref{SOSSC-Theorem 1-1}.} 
\end{table}
For the qualification conditions in Theorem \ref{SOSSC-Theorem 1-1} to guaranty that a point is locally optimal, much stronger second order sufficient conditions are needed; see \cite{FischerZemZhou2019,MehlitzZemkohoSufficient2019} for a detailed analysis of first and second order sufficient conditions for optimality in bilevel optimization.

Following up on the tradition adopted so far in this paper, we summarize the key features of the systems solved for \eqref{eq:KKTR} and \eqref{eq:LLVFR} and the corresponding requirements ensuring that Algorithm \ref{algorithm 1} converges.

\section{Numerical experiments}\label{Numerical experiments}

Based on our implementation of Algorithm \ref{algorithm 1} in \MATLAB (R2018a), we report and discuss test results obtained for the 124 nonlinear bilevel optimization problems in the current version of the BOLIB \cite{BOLIB2017} and a quadratic bilevel optimal control (BOC) program with large size from \cite{MehlitzGerd16}.

Recall that the necessary optimality conditions \eqref{L-x}--\eqref{CP-w} and \eqref{KS-1}--\eqref{KS-3} and their reformulation \eqref{KKT-Phi-lambda} and \eqref{VF-Phi-lambda} as nonsmooth system of equations contain the penalization parameter $\lambda>0$. Since there is no rule to select an appropriate $\lambda$, one may try all $\lambda$ from a certain finite discrete set in $(0,\infty)$, solve the corresponding optimality conditions, and then choose the best solution in terms of the upper-level objective function value.
For our approach, it turned out that a small set of $\lambda$-values  is sufficient to reach very good results.
To be precise, for all our experiments, we just used the 11 values of $\lambda$ in $\bar\Lambda:=\{2^{-3}, 2^{-2}, \cdots, 2^{6}, 2^{7}\}$.

\subsection{Implementation details and test problems}
Besides the selection of penalization parameters described before, the other parameters needed in Algorithm \ref{algorithm 1} are set to
\[
\beta:=10^{-8},\quad \epsilon:=10^{-8},\quad t:=2.1,\quad \rho:=0.5,\; \mbox{ and } \; \sigma:=10^{-4}.
\]
For each test example, we only use one starting point $(x^o, y^o)$ defined as follows.  If  an  example in the literature comes with a starting point, then we use this point for our experiments. Otherwise, we choose $x^o=\textbf{1}_n$ and $y^o=\textbf{1}_m$ except for the three examples $\sharp$20, 119, and 120 because their global optimal solutions are $\left(\textbf{1}_n, \textbf{1}_m\right)$. Note that $\textbf{1}_n:=(1,\cdots,1)^\top\in\mathbb{R}^n$, for example. So, for these three examples we use $x^o=-\textbf{1}_n$ and $y^o=-\textbf{1}_m$.  Detailed information on starting points can be found in \cite{FischerZemkohoZhouDetailed2018}.   
Moreover, to fully define $\zeta^o: =( x^o, y^o, z^o, s^o, u^o, v^o, w^o)$ in \eqref{KKT-Lagrangian} , we set
\begin{eqnarray*}
 z^o = - \left(\left|g_1(x^o,y^o)\right|, \ldots, \left|g_q(x^o,y^o)\right|\right)^\top, \;  u^o := \left(\left|G_1(x^o,y^o)\right|, \ldots, \left|G_p(x^o,y^o)\right|\right)^\top, \;
 v^o:=-z^o, \; w^o := v^o.
\end{eqnarray*}
As for \eqref{VF-Lagrangian} we define $\zeta^o=(x^o, y^o, z^o, u^o, v^o, w^o)$ by
\begin{eqnarray*}
z^o := y^o, \; u^o := \left(\left|G_1(x^o,y^o)\right|, \ldots, \left|G_p(x^o,y^o)\right|\right)^\top, \; v^o := \left(\left|g_1(x^o,y^o)\right|, \ldots, \left|g_q(x^o,y^o)\right|\right)^\top,\;
 w^o := v^o.
\end{eqnarray*}
In addition to the stopping criterion $\|\Phi^{\lambda}(\zeta^k)\|\le\epsilon$ used in Algorithm \ref{algorithm 1}, the algorithm is terminated if the iteration index $k$ reaches 2000.
Finally,  to pick an element from the generalized B-subdifferential $\partial_B\Phi^{\lambda}(\zeta^k)$ in Step 2 of Algorithm \ref{algorithm 1}, we adopt the technique in \cite{DeLuca1996}. Finally, we denote the semismooth Newton method (i.e., Algorithm \ref{algorithm 1}) for \eqref{L-x}--\eqref{CP-w} with $\lambda :=\lambda_1$ and \eqref{KS-1}--\eqref{KS-3}  with $\lambda :=\lambda_2$ as SNKKT  and SNLLVF, respectively.

{\liuhao \begin{longtable}[ht]{p{0.29cm}p{3.9cm}p{.9cm}rrrrrrrr}
\hline
$\sharp$ &\textbf{Example} &\textbf{Status}&\multicolumn{2}{c}{Known}& \multicolumn{2}{c}{\texttt{SNKKT}}& \multicolumn{2}{c}{\texttt{SNVF}}&\multicolumn{2}{c}{$\delta^{\lambda^*}$}\\\cline{4-11}
&&&$F_{known}$& $ f_{known}$&$F^{\lambda_1^*}$& $f^{\lambda_1^*}$& $F^{\lambda_2^*}$ & $f^{\lambda_2^*}$&$\delta^{\lambda_1^*}$&$\delta^{\lambda_2^*}$ \\\hline
1	&	\texttt{	AiyoshiShimizu1984Ex2	}&	optimal	&	5	&	0	&	34.77	&	0	&	4.97	&	0	&	5.95	&	0.01	\\\rowcolor[gray]{.9}	
2	&	\texttt{	AllendeStill2013	}&	optimal	&	1	&	-0.5	&	1	&	-0.5	&	0.99	&	-0.51	&	0	&	0.01	\\	
3	&	\texttt{	AnEtal2009	}&	optimal	&	2251.6	&	565.8	&	2251.6	&	565.8	&	2251.6	&	565.8	&	0	&	0	\\\rowcolor[gray]{.9}	
4	&	\texttt{	Bard1988Ex1	}&	optimal	&	17	&	1	&	17	&	1	&	17	&	1	&	0	&	0	\\	
5	&	\texttt{	Bard1988Ex2	}&	optimal	&	-6600	&	54	&	-6600	&	54	&	-6600	&	54	&	0	&	0	\\\rowcolor[gray]{.9}	
6	&	\texttt{	Bard1988Ex3	}&	optimal	&	-12.68	&	-1.02	&	-12.68	&	-1.02	&	-12.68	&	-1.02	&	0	&	0	\\	
7	&	\texttt{	Bard1991Ex1	}&	optimal	&	2	&	12	&	2	&	12	&	2	&	12	&	0	&	0	\\\rowcolor[gray]{.9}	
8	&	\texttt{	BardBook1998	}&	optimal	&	0	&	5	&	0	&	5	&	0	&	5	&	0	&	0	\\	
9	&	\texttt{	CalamaiVicente1994a	}&	optimal	&	0	&	0	&	0	&	0	&	0	&	0	&	0	&	0	\\\rowcolor[gray]{.9}	
10	&	\texttt{	CalamaiVicente1994b	}&	optimal	&	0.31	&	-0.41	&	0.31	&	-0.41	&	0.31	&	-0.41	&	0	&	0	\\	
11	&	\texttt{	CalamaiVicente1994c	}&	optimal	&	0.31	&	-0.41	&	0.31	&	-0.41	&	0.31	&	-0.41	&	0	&	0	\\\rowcolor[gray]{.9}	
12	&	\texttt{	CalveteGale1999P1	}&	optimal	&	-29.2	&	0.31	&	-29.2	&	0.31	&	-29.2	&	0.31	&	0	&	0	\\	
13	&	\texttt{	ClarkWesterberg1990a	}&	optimal	&	5	&	4	&	5	&	4	&	5	&	4	&	0	&	0	\\\rowcolor[gray]{.9}	
14	&	\texttt{	Colson2002BIPA1	}&	optimal	&	250	&	0	&	250	&	0	&	250	&	0	&	0	&	0	\\	
15	&	\texttt{	Colson2002BIPA2	}&	known	&	17	&	2	&	17	&	2	&	17	&	2	&	0	&	0	\\\rowcolor[gray]{.9}	
16	&	\texttt{	Colson2002BIPA3	}&	known	&	2	&	24.02	&	2	&	24.02	&	2	&	24.02	&	0	&	0	\\	
17	&	\texttt{	Colson2002BIPA4	}&	known	&	88.79	&	-0.77	&	88.79	&	-0.77	&	88.79	&	-0.77	&	0	&	0	\\\rowcolor[gray]{.9}	
18	&	\texttt{	Colson2002BIPA5	}&	known	&	2.75	&	0.57	&	2.75	&	0.55	&	2	&	-1	&	0	&	-0.27	\\	
19	&	\texttt{	Dempe1992a	}&	unknown	&		&		&	0	&	0.5	&	0	&	0.5	&		&		\\\rowcolor[gray]{.9}	
20	&	\texttt{	Dempe1992b	}&	optimal	&	31.25	&	4	&	31.25	&	4	&	31.25	&	4	&	0	&	0	\\	
21	&	\texttt{	DempeDutta2012Ex24	}&	optimal	&	0	&	0	&	0.12	&	0	&	0	&	0	&	0.12	&	0	\\\rowcolor[gray]{.9}	
22	&	\texttt{	DempeDutta2012Ex31	}&	optimal	&	-1	&	4	&	-0.7	&	3.33	&	-1.07	&	4.29	&	0.3	&	0.07	\\	
23	&	\texttt{	DempeFranke2011Ex41	}&	optimal	&	-1	&	-1	&	-1	&	-1	&	-1	&	-1	&	0	&	0	\\\rowcolor[gray]{.9}	
24	&	\texttt{	DempeFranke2011Ex42	}&	optimal	&	5	&	-2	&	5	&	-2	&	4.99	&	-2.01	&	0	&	0	\\	
25	&	\texttt{	DempeFranke2014Ex38	}&	optimal	&	2.13	&	-3.5	&	2.2	&	-3.5	&	2.13	&	-3.5	&	0.03	&	0	\\\rowcolor[gray]{.9}	
26	&	\texttt{	DempeEtal2012	}&	optimal	&	-1	&	-4	&	-1	&	-4	&	-1	&	-4	&	0	&	0	\\	
27	&	\texttt{	DempeLohse2011Ex31a	}&	optimal	&	-5.5	&	0	&	-5.5	&	0	&	-5.5	&	0	&	0	&	0	\\\rowcolor[gray]{.9}	
28	&	\texttt{	DempeLohse2011Ex31b	}&	optimal	&	-12	&	0	&	-12	&	0	&	-12	&	0	&	0	&	0	\\	
29	&	\texttt{	DeSilva1978	}&	optimal	&	-1	&	0	&	-1	&	0	&	-1.01	&	0	&	0	&	0.01	\\\rowcolor[gray]{.9}	
30	&	\texttt{	FalkLiu1995	}&	optimal	&	-2.2	&	0	&	-2.25	&	0	&	-2.22	&	0	&	0.02	&	0.01	\\	
31	&	\texttt{	FloudasEtal2013	}&	optimal	&	0	&	200	&	0	&	200	&	0	&	200	&	0	&	0	\\\rowcolor[gray]{.9}	
32	&	\texttt{	FloudasZlobec1998	}&	optimal	&	1	&	-1	&	1	&	-1	&	1	&	-1	&	0	&	0	\\	
33	&	\texttt{	GumusFloudas2001Ex1	}&	optimal	&	2250	&	197.8	&	2250	&	197.8	&	2250	&	197.8	&	0	&	0	\\\rowcolor[gray]{.9}	
34	&	\texttt{	GumusFloudas2001Ex3	}&	optimal	&	-29.2	&	0.31	&	-6	&	0.29	&	-29.2	&	0.31	&	0.79	&	0	\\	
35	&	\texttt{	GumusFloudas2001Ex4	}&	optimal	&	9	&	0	&	9	&	0	&	9	&	0	&	0	&	0	\\\rowcolor[gray]{.9}	
36	&	\texttt{	GumusFloudas2001Ex5	}&	optimal	&	0.19	&	-7.23	&	0.19	&	-7.23	&	0.19	&	-7.23	&	0	&	0	\\	
37	&	\texttt{	HatzEtal2013	}&	optimal	&	0	&	0	&	0	&	0	&	-0.13	&	0.02	&	0	&	0.13	\\\rowcolor[gray]{.9}	
38	&	\texttt{	HendersonQuandt1958	}&	known	&	-3266.7	&	-711.1	&	-3266.7	&	-711.1	&	-3275.2	&	-709.5	&	0	&	0	\\	
39	&	\texttt{	HenrionSurowiec2011	}&	optimal	&	0	&	0	&	0	&	0	&	0	&	0	&	0	&	0	\\\rowcolor[gray]{.9}	
40	&	\texttt{	IshizukaAiyoshi1992a	}&	optimal	&	0	&	-1.5	&	0	&	-0.02	&	0	&	0	&	0.98	&	1	\\	
41	&	\texttt{	KleniatiAdjiman2014Ex3	}&	optimal	&	-1	&	0	&	-1	&	0	&	-1	&	0	&	0	&	0	\\\rowcolor[gray]{.9}	
42	&	\texttt{	KleniatiAdjiman2014Ex4	}&	known	&	-10	&	-3.1	&	-2	&	-0.1	&	-6.04	&	-2.1	&	0.97	&	0.4	\\	
43	&	\texttt{	LamparSagrat2017Ex23	}&	optimal	&	-1	&	1	&	-1	&	1	&	-1	&	1	&	0	&	0	\\\rowcolor[gray]{.9}	
44	&	\texttt{	LamparSagrat2017Ex31	}&	optimal	&	1	&	0	&	1	&	0	&	1	&	0	&	0	&	0	\\	
45	&	\texttt{	LamparSagrat2017Ex32	}&	optimal	&	0.5	&	0	&	0.5	&	0	&	0.5	&	0	&	0	&	0	\\\rowcolor[gray]{.9}	
46	&	\texttt{	LamparSagrat2017Ex33	}&	optimal	&	0.5	&	0	&	0.5	&	0	&	0.5	&	0	&	0	&	0	\\	
47	&	\texttt{	LamparSagrat2017Ex35	}&	optimal	&	0.8	&	-0.4	&	0.8	&	-0.4	&	0.8	&	-0.4	&	0	&	0	\\\rowcolor[gray]{.9}	
48	&	\texttt{	LucchettiEtal1987	}&	optimal	&	0	&	0	&	0	&	0	&	0	&	0	&	0	&	0	\\	
49	&	\texttt{	LuDebSinha2016a	}&	known	&	1.14	&	1.18	&	1.11	&	1.95	&	1.14	&	1.18	&	0.65	&	0	\\\rowcolor[gray]{.9}	
50	&	\texttt{	LuDebSinha2016b	}&	known	&	0	&	1.66	&	0.05	&	1.17	&	0.03	&	1.2	&	0.05	&	0.03	\\	
51	&	\texttt{	LuDebSinha2016c	}&	known	&	1.12	&	0.06	&	1.64	&	0.03	&	1.12	&	0.06	&	0.47	&	0	\\\rowcolor[gray]{.9}	
52	&	\texttt{	LuDebSinha2016d	}&	unknown	&		&		&	-114.85	&	-16.07	&	-192	&	-192	&		&		\\	
53	&	\texttt{	LuDebSinha2016e	}&	unknown	&		&		&	1.1	&	-18.57	&	2.09	&	-17.68	&		&		\\\rowcolor[gray]{.9}	
54	&	\texttt{	LuDebSinha2016f	}&	unknown	&		&		&	0	&	0.13	&	0	&	0.13	&		&		\\	
55	&	\texttt{	MacalHurter1997	}&	optimal	&	81.33	&	-0.33	&	81.33	&	-0.34	&	81.33	&	-0.33	&	0	&	0	\\\rowcolor[gray]{.9}	
56	&	\texttt{	Mirrlees1999	}&	optimal	&	1	&	-1.02	&	0.01	&	-1.04	&	0.87	&	-1.07	&	0.99	&	0.13	\\	
57	&	\texttt{	MitsosBarton2006Ex38	}&	optimal	&	0	&	0	&	0	&	0	&	0	&	0	&	0	&	0	\\\rowcolor[gray]{.9}	
58	&	\texttt{	MitsosBarton2006Ex39	}&	optimal	&	-1	&	-1	&	-1	&	-1	&	-1	&	-1	&	0	&	0	\\	
59	&	\texttt{	MitsosBarton2006Ex310	}&	optimal	&	0.5	&	-0.1	&	0.5	&	-0.43	&	0.5	&	-0.1	&	0.33	&	0	\\\rowcolor[gray]{.9}	
60	&	\texttt{	MitsosBarton2006Ex311	}&	optimal	&	-0.8	&	0	&	-0.8	&	0	&	-0.5	&	0	&	0	&	0.3	\\	
61	&	\texttt{	MitsosBarton2006Ex312	}&	optimal	&	0	&	0	&	0	&	0	&	-0.02	&	0	&	0	&	0.02	\\\rowcolor[gray]{.9}	
62	&	\texttt{	MitsosBarton2006Ex313	}&	optimal	&	-1	&	0	&	0	&	-0.5	&	-1	&	0	&	1	&	0	\\	
63	&	\texttt{	MitsosBarton2006Ex314	}&	optimal	&	0.25	&	-0.08	&	0.06	&	0	&	0.22	&	-0.07	&	0.19	&	0.03	\\\rowcolor[gray]{.9}	
64	&	\texttt{	MitsosBarton2006Ex315	}&	optimal	&	0	&	-0.83	&	0	&	-0.83	&	0.65	&	-0.51	&	0	&	0.65	\\	
65	&	\texttt{	MitsosBarton2006Ex316	}&	optimal	&	-2	&	0	&	-1	&	0.25	&	-2.06	&	0	&	0.5	&	0.03	\\\rowcolor[gray]{.9}	
66	&	\texttt{	MitsosBarton2006Ex317	}&	optimal	&	0.19	&	-0.02	&	0.19	&	-0.02	&	0.24	&	0	&	0	&	0.05	\\	
67	&	\texttt{	MitsosBarton2006Ex318	}&	optimal	&	-0.25	&	0	&	0	&	0	&	0	&	0	&	0.25	&	0.25	\\\rowcolor[gray]{.9}	
68	&	\texttt{	MitsosBarton2006Ex319	}&	optimal	&	-0.26	&	-0.02	&	-0.26	&	-0.02	&	-0.26	&	-0.02	&	0	&	0	\\	
69	&	\texttt{	MitsosBarton2006Ex320	}&	optimal	&	0.31	&	-0.08	&	0.03	&	0	&	0.03	&	0	&	0.28	&	0.28	\\\rowcolor[gray]{.9}	
70	&	\texttt{	MitsosBarton2006Ex321	}&	optimal	&	0.21	&	-0.07	&	0.21	&	-0.07	&	0.21	&	-0.07	&	0	&	0	\\	
71	&	\texttt{	MitsosBarton2006Ex322	}&	optimal	&	0.21	&	-0.07	&	0.21	&	-0.07	&	0.21	&	-0.07	&	0	&	0	\\\rowcolor[gray]{.9}	
72	&	\texttt{	MitsosBarton2006Ex323	}&	optimal	&	0.18	&	-1	&	0.18	&	-1	&	0.18	&	-1	&	0	&	0	\\	
73	&	\texttt{	MitsosBarton2006Ex324	}&	optimal	&	-1.76	&	0	&	-1.75	&	0	&	-1.76	&	0	&	0	&	0	\\\rowcolor[gray]{.9}	
74	&	\texttt{	MitsosBarton2006Ex325	}&	known	&	-1	&	-2	&	0	&	0	&	0	&	0	&	1	&	1	\\	
75	&	\texttt{	MitsosBarton2006Ex326	}&	optimal	&	-2.35	&	-2	&	-2	&	-2	&	-2	&	-2	&	0.15	&	0.15	\\\rowcolor[gray]{.9}	
76	&	\texttt{	MitsosBarton2006Ex327	}&	known	&	2	&	-1.1	&	1.45	&	-0.4	&	1.16	&	-1.04	&	0.64	&	0.05	\\	
77	&	\texttt{	MitsosBarton2006Ex328	}&	known	&	-10	&	-3.1	&	-3.8	&	-1.78	&	-3.95	&	-0.1	&	0.62	&	0.97	\\\rowcolor[gray]{.9}	
78	&	\texttt{	MorganPatrone2006a	}&	optimal	&	-1	&	0	&	-1	&	0	&	-1	&	0	&	0	&	0	\\	
79	&	\texttt{	MorganPatrone2006b	}&	optimal	&	-1.25	&	0	&	-1.5	&	0.25	&	-1.25	&	0	&	0.25	&	0	\\\rowcolor[gray]{.9}	
80	&	\texttt{	MorganPatrone2006c	}&	optimal	&	-1	&	-0.25	&	0.75	&	0	&	-1	&	-0.25	&	1.75	&	0	\\	
81	&	\texttt{	MuuQuy2003Ex1	}&	known	&	-2.08	&	-0.59	&	-3.62	&	-1.2	&	-3.26	&	-1.13	&	-0.62	&	-0.55	\\\rowcolor[gray]{.9}	
82	&	\texttt{	MuuQuy2003Ex2	}&	known	&	0.64	&	1.67	&	0.64	&	1.68	&	0.64	&	1.68	&	0.01	&	0.01	\\	
83	&	\texttt{	NieWangYe2017Ex34	}&	optimal	&	2	&	0	&	2	&	0	&	2	&	0	&	0	&	0	\\\rowcolor[gray]{.9}	
84	&	\texttt{	NieWangYe2017Ex52	}&	optimal	&	-1.71	&	-2.23	&	-1.41	&	-2	&	-1.39	&	-2.03	&	0.17	&	0.19	\\	
85	&	\texttt{	NieWangYe2017Ex54	}&	optimal	&	-0.44	&	-1.19	&	0	&	0	&	-0.15	&	-0.03	&	1	&	0.98	\\\rowcolor[gray]{.9}	
86	&	\texttt{	NieWangYe2017Ex57	}&	known	&	-2	&	-1	&	0	&	0	&	-2.04	&	-0.99	&	1	&	0.01	\\	
87	&	\texttt{	NieWangYe2017Ex58	}&	known	&	-3.49	&	-0.86	&	-0.32	&	0.05	&	-3.53	&	-0.84	&	0.91	&	0.02	\\\rowcolor[gray]{.9}	
88	&	\texttt{	NieWangYe2017Ex61	}&	known	&	-1.02	&	-1.08	&	-1.02	&	-1.08	&	-1.02	&	-1.09	&	0	&	0	\\	
89	&	\texttt{	Outrata1990Ex1a	}&	known	&	-8.92	&	-6.05	&	-8.92	&	-6.14	&	-8.96	&	-6.08	&	0	&	0	\\\rowcolor[gray]{.9}	
90	&	\texttt{	Outrata1990Ex1b	}&	known	&	-7.56	&	-0.58	&	-7.58	&	-0.57	&	-7.62	&	-0.57	&	0.01	&	0.01	\\	
91	&	\texttt{	Outrata1990Ex1c	}&	known	&	-12	&	-112.71	&	-12	&	-76.45	&	-12	&	-174.81	&	0.32	&	0	\\\rowcolor[gray]{.9}	
92	&	\texttt{	Outrata1990Ex1d	}&	known	&	-3.6	&	-2	&	-3.6	&	-2	&	-3.6	&	-1.99	&	0	&	0.01	\\	
93	&	\texttt{	Outrata1990Ex1e	}&	known	&	-3.15	&	-16.29	&	-3.79	&	-17.95	&	-3.79	&	-17.94	&	-0.1	&	-0.1	\\\rowcolor[gray]{.9}	
94	&	\texttt{	Outrata1990Ex2a	}&	known	&	0.5	&	-14.53	&	0.5	&	-14.54	&	0.5	&	-14.53	&	0	&	0	\\	
95	&	\texttt{	Outrata1990Ex2b	}&	known	&	0.5	&	-4.5	&	0.5	&	-4.5	&	0.5	&	-4.5	&	0	&	0	\\\rowcolor[gray]{.9}	
96	&	\texttt{	Outrata1990Ex2c	}&	known	&	1.86	&	-10.93	&	1.86	&	-10.93	&	1.85	&	-10.93	&	0	&	0	\\	
97	&	\texttt{	Outrata1990Ex2d	}&	known	&	0.92	&	-19.47	&	0.4	&	-25.37	&	0.33	&	-25.8	&	-0.3	&	-0.33	\\\rowcolor[gray]{.9}	
98	&	\texttt{	Outrata1990Ex2e	}&	known	&	0.9	&	-14.94	&	0.9	&	-14.93	&	0.9	&	-15.11	&	0	&	0	\\	
99	&	\texttt{	Outrata1993Ex31	}&	known	&	1.56	&	-11.68	&	1.56	&	-11.68	&	1.56	&	-11.67	&	0	&	0	\\\rowcolor[gray]{.9}	
100	&	\texttt{	Outrata1993Ex32	}&	known	&	3.21	&	-20.53	&	3.21	&	-20.53	&	3.21	&	-20.49	&	0	&	0	\\	
101	&	\texttt{	Outrata1994Ex31	}&	known	&	3.21	&	-20.53	&	3.21	&	-20.53	&	3.2	&	-20.45	&	0	&	0	\\\rowcolor[gray]{.9}	
102	&	\texttt{	OutrataCervinka2009	}&	optimal	&	0	&	0	&	0	&	0	&	0	&	0	&	0	&	0	\\	
103	&	\texttt{	PaulaviciusEtal2017a	}&	optimal	&	0.25	&	0	&	0	&	0	&	0.31	&	-0.09	&	0.25	&	0.09	\\\rowcolor[gray]{.9}	
104	&	\texttt{	PaulaviciusEtal2017b	}&	optimal	&	-2	&	-1.5	&	-2	&	-1.5	&	-2	&	-1.5	&	0	&	0	\\	
105	&	\texttt{	SahinCiric1998Ex2	}&	optimal	&	5	&	4	&	5	&	4	&	5	&	4	&	0	&	0	\\\rowcolor[gray]{.9}	
106	&	\texttt{	ShimizuAiyoshi1981Ex1	}&	optimal	&	100	&	0	&	99.93	&	0	&	99.83	&	0	&	0	&	0	\\	
107	&	\texttt{	ShimizuAiyoshi1981Ex2	}&	optimal	&	225	&	100	&	225	&	100	&	225	&	100	&	0	&	0	\\\rowcolor[gray]{.9}	
108	&	\texttt{	ShimizuEtal1997a	}&	unknown	&	 	&		&	1.71	&	1.92	&	16.89	&	1.58	&		&		\\	
109	&	\texttt{	ShimizuEtal1997b	}&	optimal	&	2250	&	197.8	&	2250	&	197.8	&	2250	&	197.8	&	0	&	0	\\\rowcolor[gray]{.9}	
110	&	\texttt{	SinhaMaloDeb2014TP3	}&	known	&	-18.68	&	-1.02	&	-18.68	&	-1.02	&	-18.71	&	-1.02	&	0	&	0	\\	
111	&	\texttt{	SinhaMaloDeb2014TP6	}&	known	&	-1.21	&	7.62	&	-1.21	&	7.62	&	-1.21	&	7.62	&	0	&	0	\\\rowcolor[gray]{.9}	
112	&	\texttt{	SinhaMaloDeb2014TP7	}&	known	&	-1.96	&	1.96	&	-1.98	&	1.98	&	-1.96	&	1.96	&	0.01	&	0	\\	
113	&	\texttt{	SinhaMaloDeb2014TP8	}&	optimal	&	0	&	100	&	0.69	&	2.78	&	0	&	100	&	0.97	&	0	\\\rowcolor[gray]{.9}	
114	&	\texttt{	SinhaMaloDeb2014TP9	}&	known	&	0	&	1	&	0	&	1	&	0	&	1	&	0	&	0	\\	
115	&	\texttt{	SinhaMaloDeb2014TP10	}&	known	&	0	&	1	&	0	&	1	&	0	&	1	&	0	&	0	\\\rowcolor[gray]{.9}	
116	&	\texttt{	TuyEtal2007	}&	optimal	&	22.5	&	-1.5	&	4.44	&	-2	&	22.5	&	-1.5	&	0.8	&	0	\\	
117	&	\texttt{	Vogel2012	}&	optimal	&	1	&	-2	&	4	&	-2	&	1.78	&	-0.96	&	3	&	0.78	\\\rowcolor[gray]{.9}	
118	&	\texttt{	WanWangLv2011	}&	optimal	&	10.62	&	-0.5	&	10.63	&	-0.5	&	10.63	&	-0.5	&	0	&	0	\\	
119	&	\texttt{	YeZhu2010Ex42	}&	optimal	&	1	&	-2	&	5	&	2	&	1.05	&	-2	&	4	&	0.05	\\\rowcolor[gray]{.9}	
120	&	\texttt{	YeZhu2010Ex43	}&	optimal	&	1.25	&	-2	&	9	&	2	&	1	&	-2	&	6.2	&	0.2	\\	
121	&	\texttt{	Yezza1996Ex31	}&	optimal	&	1.5	&	-2.5	&	1.5	&	-2.5	&	1.5	&	-2.5	&	0	&	0	\\\rowcolor[gray]{.9}	
122	&	\texttt{	Yezza1996Ex41	}&	optimal	&	0.5	&	2.5	&	0.5	&	2.5	&	0.62	&	3	&	0	&	0.2	\\	
123	&	\texttt{	Zlobec2001a	}&	optimal	&	-1	&	-1	&	-1	&	-1	&	-1	&	-1	&	0	&	0	\\\rowcolor[gray]{.9}	
124	&	\texttt{	Zlobec2001b	}&	unknown	&		&		&	1	&	-1	&	1	&	-1	&		&		\\	
${}$\\
\caption{Objective function values at the solution for different selections of  $\lambda\in\bar\Lambda$. \label{numerical-results}}
\end{longtable}}

\subsection{Test  examples}
We first apply SNKKT  and SNLLVF to solve 124 test examples from the BOLIB library \cite{BOLIB2017}. Table \ref{numerical-results} lists values of the leader's objective function $F$ and follower's objective function $f$. The columns   $F_{known}$  and   $f_{known}$  show  the best known $F$-values and $f$-values from the literature. Such a value was not available for 6 of the test problems. This is marked by ``unknown'' in the \textbf{Status} column. For 83 examples, the best known $F$-value and $f$-value are even optimal (with status labelled as ``optimal''). For the remaining 35 test problems, the known  $F$-value might not be optimal and its status is just set to ``known''.

Note that examples $\sharp$14, 39, and 40 contain a parameter that should be provided by the user. The first one is associated with $\rho\geq 1$, which separates the problem into 4 cases: (i) $\rho=1$, (ii) $1<\rho<2$, (iii) $\rho=2$, and (iv) $\rho>2$. The results presented in Table \ref{numerical-results} correspond to case (i). For the other three cases, our method still produces the true global optimal solutions.  Example $\sharp$39 has a unique global optimal solution and results given in Table \ref{numerical-results} are for $c=0$. We also tested our method when $c=\pm1$, and obtained the unique optimal solutions as well. Example $\sharp$40 contains the parameter $M>1$, and the results presented in Table \ref{numerical-results} correspond to $M=1.5$.

Among the 124 examples, there are 60  where the implementation of SNKKT requires the calculation of 3rd order derivatives for $f$ or $g$. Those examples are labelled as \textbf{Group B}.
The remaining 64 examples with $f$ or $g$ where there is no need to compute  3rd order derivatives are categorized as \textbf{Group A}. We will demonstrate that SNKKT and SNVF have similar computational speed on solving examples in  \textbf{Group A} but significantly different speed for problems in  \textbf{Group B}.

\begin{table}[!th]
{	\renewcommand{\arraystretch}{1.15}\addtolength{\tabcolsep}{-1.5pt}
\begin{tabular}{llrrrrrrrrrrr}\\\hline
$\lambda$ &&$2^{-3}$&$2^{-2}$&$2^{-1}$&$2^{0}$&$2^{1}$&$2^{2}$&$2^{3}$&$2^{4}$&$2^{5}$&$2^{6}$&$2^{7}$\\\hline
Average &	\mbox{{\footnotesize{SNKKT}}}	&	166.1	&	173.7	&	157.9	&	170.9	&	212.4	&	233.2	&	308.9	&	339.0	&	351.2	&	414.9	&	447.1\\
Iter	&		\mbox{{\footnotesize{SNVF}}}	&	154.0	&	113.4	&	181.7	&	85.2	&	144.3	&	154.4	&	198.6	&	300.5	&	377.3	&	361.1	&	465.8	\\\hline
Average &	\mbox{{\footnotesize{SNKKT}}}	&	5.11	&	5.55	&	3.99	&	1.23	&	4.27	&	5.68	&	5.75	&	5.12	&	5.67	&	7.74	&	7.43	\\
Time& \mbox{{\footnotesize{SNVF}}}	&	0.17	&	0.10	&	0.16	&	0.07	&	0.15	&	0.14	&	0.21	&	0.26	&	0.31	&	0.29	&	0.36		\\\hline
Number of&	\mbox{{\footnotesize{SNKKT}}}	&	8	&	8	&	6	&	6	&	9	&	12	&	12	&	15	&	13	&	17	&	17	\\
Failures	&	SNVF	&	6	&	2	&	8	&	3	&	3	&	1	&	6	&	9	&	13	&	13	&	17	\\\hline
\multirow{2}{*}{$\alpha_K=1$}	&	\mbox{{\footnotesize{SNKKT}}}	&	114	&	109	&	107	&	110	&	107	&	109	&	109	&	106	&	105	&	105	&	100	\\
	& \mbox{{\footnotesize{SNVF}}}	&	107	&	112	&	109	&	113	&	112	&	116	&	112	&	108	&	107	&	110	&	103		\\\hline
${}$\\
\end{tabular}}
\caption{Performance of {\footnotesize{SNKKT}} and {\footnotesize{SNVF}} on solving 124 examples  for $\lambda\in\bar\Lambda$.}\label{perf-tab:1}
\end{table}

\subsection{Comparison of SNKKT  and SNVF on solving  BOLIB examples}
The first comparison is to see the ability  of SNKKT  and SNVF on solving the 124 test examples from the BOLIB library. Detailed results are listed in Table \ref{numerical-results}, where  columns $F^{\lambda_1^*}$, $f^{\lambda_1^*}$ and columns $F^{\lambda_1^*}$, $f^{\lambda_2^*}$ show the  values obtained through SNKKT  and SNVF for one of the eleven  penalization  parameters in $\bar\Lambda$, respectively. Note that evaluating the performance of an algorithm for the bilevel optimization problem \eqref{eq:P} is a difficult task since the decision whether a computed point is (close
to) a global solution of \eqref{eq:P} basically requires computing the LLVF $\varphi$. Therefore, instead of doing this, we suggest the following way of comparing our obtained results  with the
results from literature known for the test problems. For an approximate solution $(x,y)$ obtained from Algorithm \ref{algorithm 1}, we first compute
\[
 \delta^\lambda_F:=\frac{F^\lambda-F_{known}}{\max\{1,|F_{known}|\}},\quad\delta^\lambda_f:=\frac{f^\lambda-f_{known}}{\max\{1,|f_{known}|\}},
\]
where $F_{known}$ and  $f_{known}$ are the best known $F$-value and $f$-value from literature, $F^\lambda$ and $f^\lambda$ are the  objective function values generated by Algorithm \ref{algorithm 1} for a given $\lambda\in \bar\Lambda$. Moreover, we set
\[
 \delta^\lambda:=\left\{
 \begin{array}{ll}
 \max\{|\delta^\lambda_F|,|\delta^\lambda_f|\},\quad&\mbox{if  \textbf{Status} is optimal},\\
 \max\{\delta^\lambda_F,\delta^\lambda_f\},\quad&\mbox{if  \textbf{Status} is known}.
 \end{array}
 \right.
\]
In the latter case, $\delta^\lambda$ can become negative. This means that both $F$ and $f$ are smaller than the values for the point with best $F$-value and $f$-value known in the literature. We then pick the value of $\lambda^*$ via the following rule:
\[
 \lambda^*=\left\{
 \begin{array}{ll}
 {\rm argmin}_{\lambda\in\bar\Lambda} F^\lambda,\quad&\mbox{if  \textbf{Status} is unkown},\\
 {\rm argmin}_{\lambda\in\bar\Lambda} \delta^\lambda,\quad&\mbox{otherwise}.
 \end{array}
 \right.
\]
Then we report the $F$-value and $f$-value $(F^{\lambda^*}, f^{\lambda^*})$ under  $\lambda^*$ and compare them with $(F_{known}, f_{known})$.
Since Algorithm \ref{algorithm 1}  has two versions (SNKKT associated with $\lambda=\lambda_1$  and SNVF associated with $\lambda=\lambda_2$),
Table \ref{numerical-results} lists  $(F^{\lambda_1^*}, f^{\lambda_1^*})$, $(F^{\lambda_2^*}$, $f^{\lambda_2^*})$ and $\delta^{\lambda_1^*}$ and  $\delta^{\lambda_2^*}$.  Note that it is not necessary that $\lambda^*_1=\lambda^*_2$ for each test example. 
One can observe that there are 33 (resp. 21) examples with $\delta^{\lambda_1^*}\geq0.05$ (resp. $\delta^{\lambda_2^*}\geq0.05$), which means SNKKT (resp. SNVF) did not get improved solutions for those examples by using the above given starting point.

\begin{table}[!th]
{\renewcommand{\arraystretch}{1.15}\addtolength{\tabcolsep}{-1.5pt}
\begin{tabular}{lcccccc|cccccc}\\\hline
&\multicolumn{6}{c|}{\textbf{Group A}}&\multicolumn{6}{c}{\textbf{Group B}}\\\cline{2-7}\cline{8-13}
&\multicolumn{2}{c}{Aver. Iter}&\multicolumn{2}{c}{Aver. Time}&\multicolumn{2}{c|}{Aver. Time/Iter}&\multicolumn{2}{c}{Aver. Iter}&\multicolumn{2}{c}{Aver. Time}&\multicolumn{2}{c}{Aver. Time/Iter}\\\cline{2-7}\cline{8-13}
$\lambda$	&	\mbox{{\footnotesize{SNKKT}}}	&	\mbox{{\footnotesize{SNVF}}}	&	\mbox{{\footnotesize{SNKKT}}}	&	\mbox{{\footnotesize{SNVF}}}	&	\mbox{{\footnotesize{SNKKT}}}	&	\mbox{{\footnotesize{SNVF}}}	&	\mbox{{\footnotesize{SNKKT}}}	&	\mbox{{\footnotesize{SNVF}}}	&	\mbox{{\footnotesize{SNKKT}}}	&	\mbox{{\footnotesize{SNVF}}}	&	\mbox{{\footnotesize{SNKKT}}}	&	\mbox{{\footnotesize{SNVF}}}	\\\hline
$2^{-3}$	&	161.7	&	117.1	&	0.25	&	0.18	&	0.0016	&	0.0015	&	171.1	&	196.1	&	10.67	&	0.26	&	0.0623	&	0.0013	\\
$2^{-2}$	&	112.7	&	114.2	&	0.15	&	0.14	&	0.0013	&	0.0012	&	243.1	&	112.5	&	12.17	&	0.14	&	0.0500	&	0.0012	\\
$2^{-1}$	&	128.7	&	144.3	&	0.19	&	0.18	&	0.0015	&	0.0013	&	191.1	&	224.3	&	8.13	&	0.26	&	0.0426	&	0.0011	\\
$2^{0}$	&	119.9	&	87.2	&	0.16	&	0.10	&	0.0013	&	0.0011	&	228.9	&	82.8	&	2.28	&	0.07	&	0.0100	&	0.0009	\\
$2^{1}$	&	176.5	&	101.5	&	0.23	&	0.13	&	0.0013	&	0.0013	&	253.3	&	193.1	&	8.54	&	0.22	&	0.0337	&	0.0011	\\
$2^{2}$	&	197.6	&	149.4	&	0.29	&	0.17	&	0.0015	&	0.0011	&	273.6	&	160.0	&	12.28	&	0.18	&	0.0449	&	0.0012	\\
$2^{3}$	&	232.7	&	162.7	&	0.38	&	0.20	&	0.0016	&	0.0012	&	395.7	&	239.4	&	11.44	&	0.32	&	0.0289	&	0.0014	\\
$2^{4}$	&	347.8	&	332.3	&	0.49	&	0.34	&	0.0014	&	0.0010	&	329.0	&	264.4	&	10.15	&	0.29	&	0.0308	&	0.0011	\\
$2^{5}$	&	328.1	&	379.9	&	0.44	&	0.40	&	0.0013	&	0.0011	&	377.6	&	374.3	&	11.59	&	0.41	&	0.0307	&	0.0011	\\
$2^{6}$	&	371.4	&	351.9	&	0.52	&	0.35	&	0.0014	&	0.0010	&	464.3	&	371.6	&	15.87	&	0.41	&	0.0342	&	0.0011	\\
$2^{7}$	&	375.6	&	406.4	&	0.50	&	0.42	&	0.0013	&	0.0010	&	528.5	&	533.3	&	15.37	&	0.63	&	0.0291	&	0.0012	\\
\hline
${}$\\
\end{tabular}}
\caption{Performance of {\footnotesize{SNKKT}} and {\footnotesize{SNVF}} on examples from two groups for $\lambda\in\bar\Lambda$.}\label{perf-tab:2}
\end{table}
We then compare the test runs/number of iterations and computational time (in seconds) of SNKKT and SNVF.  As shown in  the first four rows of Table \ref{perf-tab:1}, SNVF uses fewer iterations for all examples except for $\lambda=2^{-1}$ or $2^7$ and runs much faster than SNKKT for all $\lambda\in\bar\Lambda$ in the average sense. In addition, as we mentioned before, we separated the 124 examples into  \textbf{Group A}  and  \textbf{Group B} in order to see the behaviour of these two methods on solving examples from each group.
Results are presented in Table \ref{perf-tab:2}. When the two methods are applied to solve examples from \textbf{Group A}, SNVF takes a smaller average time (e.g. Aver. Time) for each $\lambda$ and uses fewer average iterations (e.g. Aver. Iter) for  most values of $\lambda$. However, the average computational time per iteration (e.g. Aver. Time/Iter) for SNVF  was almost as same as the one needed by SNKKT. By contrast, when the two methods are applied to solve examples from \textbf{Group B}, the picture is significantly different. For each $\lambda$, the average time of SNVF is much smaller than that of SNKKT. Most importantly, as what we expected, SNVF ran much faster than SNKKT for each iteration because the Aver. Time/Iter of SNKKT is dozens of times higher than that of SNVF.  Interestingly, the Aver. Iter of SNKKT on solving examples in  \textbf{Group B} is more than that of solving examples in  \textbf{Group A} for all $\lambda$ except for $\lambda=2^4$. Similar observation can be seen on Aver. Iter for SNVF. This implies that the more information used did not necessarily led to fewer iterations.  For instance, results from the two methods on solving Example $\sharp$114 \texttt{SinhaMaloDeb2014TP9} are presented in Figure \ref{ex114}. This example has a very complicated lower level objective function,
$$f(x,y)=\exp\left[\left(1+ \frac{1}{4000}\sum^{10}_{i=1}y^2_i - \prod^{10}_{i=1}\cos\left(\frac{y_i}{\sqrt{i}}\right)\right)\sum^{10}_{i=1}x^2_i\right]$$
and thus the complexity of computing the third order derivative of $f$ is relatively high. Despite the fact that the two methods obtain the best known optimal solutions for each $\lambda$, SNKKT uses more iterations and took much longer than SNVF. The latter only needs 2 iterations with cost less than 1 second to produce the solution, while the former takes more than 8 iterations  and spent hundreds of seconds for all $\lambda$ (except for $\lambda=2^0$).

\begin{figure}[ht]
\centering
    \includegraphics[width=1\linewidth]{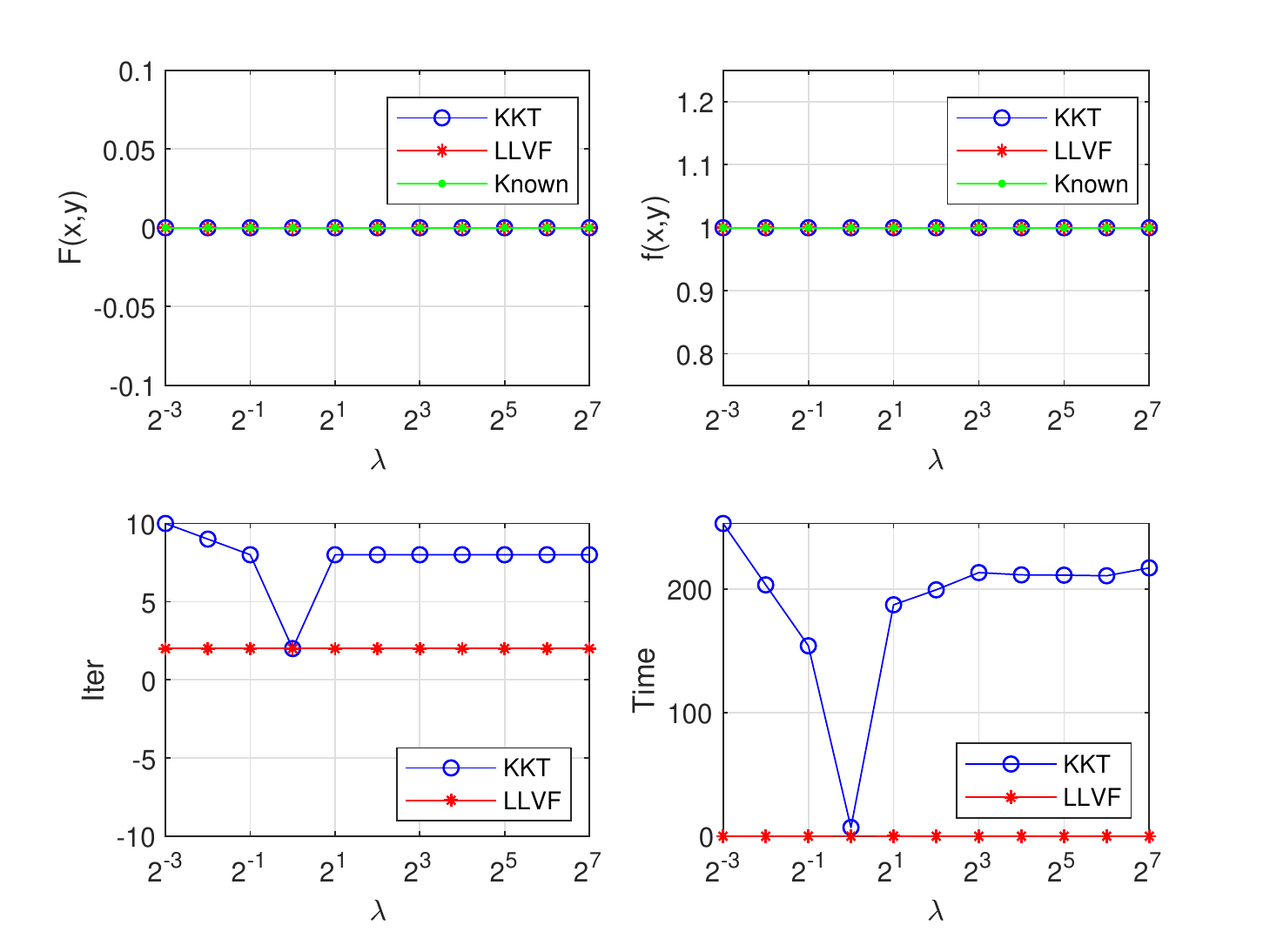}
  \caption{Performance of SNKKT and SNVF on solving Example $\sharp$114.}
  \label{ex114}
\end{figure}

\begin{figure}[ht]
\centering
    \includegraphics[width=.7\linewidth]{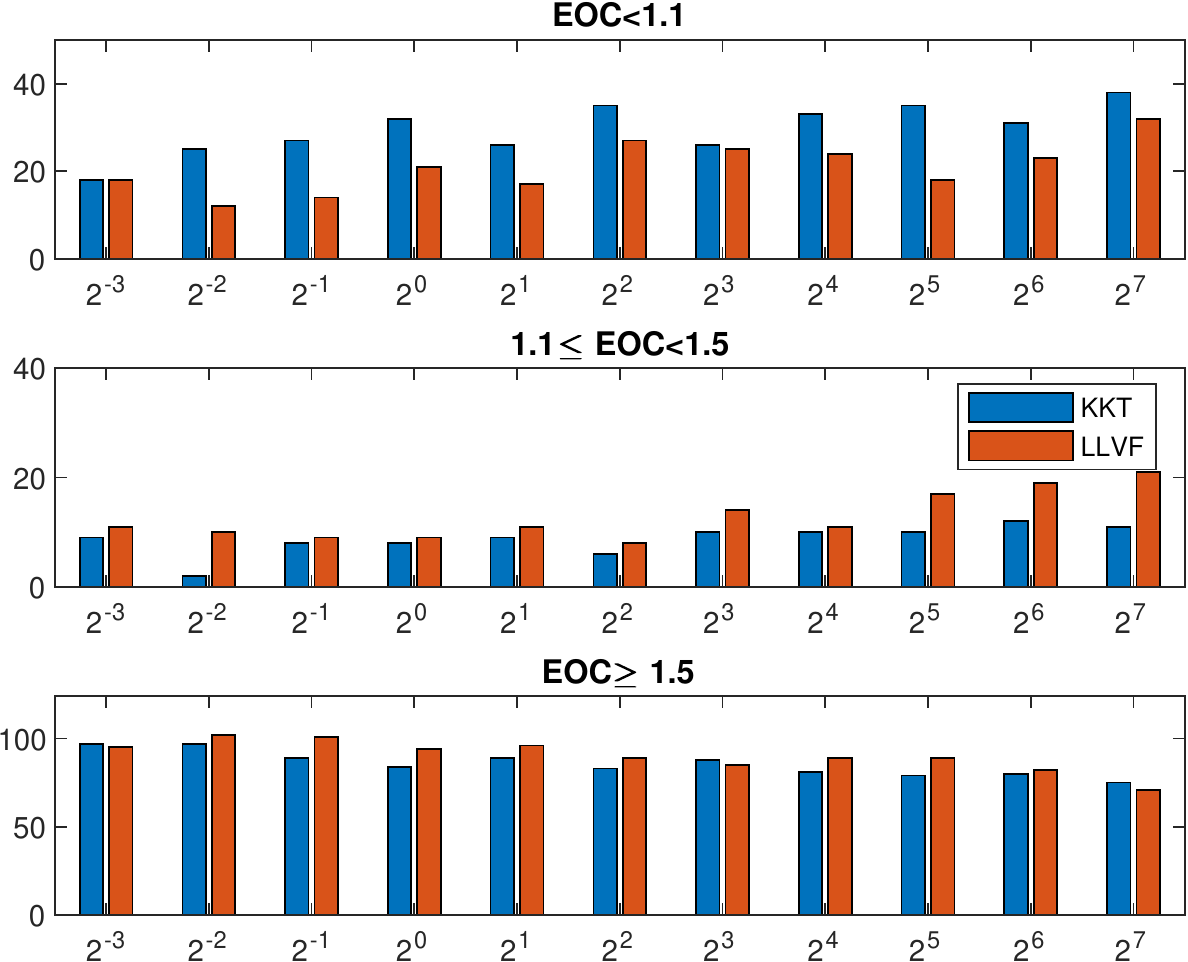}
  \caption{Experimental order of convergence (EOC) of SNKKT and SNVF for $\lambda\in\bar\Lambda$.}
  \label{eoc}
\end{figure}
We next report on examples where SNKKT and SNVF failed to solve, in the sense that the corresponding method ended with  $\|\Phi^{\lambda}(\zeta^K)\|\geq\epsilon$; hence, it did not converge before it stops. Here, note that $K$ denotes the iteration number,  where the methods are terminated.
 As shown in the third and forth rows in Table \ref{perf-tab:1}, it can be clearly seen that, for each $\lambda$ except for $\lambda=2^{-1}$, SNVF got better results than SNKKT in terms of number of failures. For instance, when $\lambda=2^2$, SNVF failed to solve one example, whilst SNKKT  failed to solve 12 examples. The last two rows  in Table \ref{perf-tab:1} list  the number of examples solved at last step with $\alpha_K=1$, a full Newton step. For all values of $\lambda\in \bar\Lambda$, both methods solved more than 100 examples while stopping with $\alpha_K=1$. And clearly, compared with SNKKT, more examples (except for the case of $\lambda=2^{-3}$) were handled by SNVF with $\alpha_K=1$.

\begin{figure}[ht]
\centering
    \includegraphics[width=.8\linewidth]{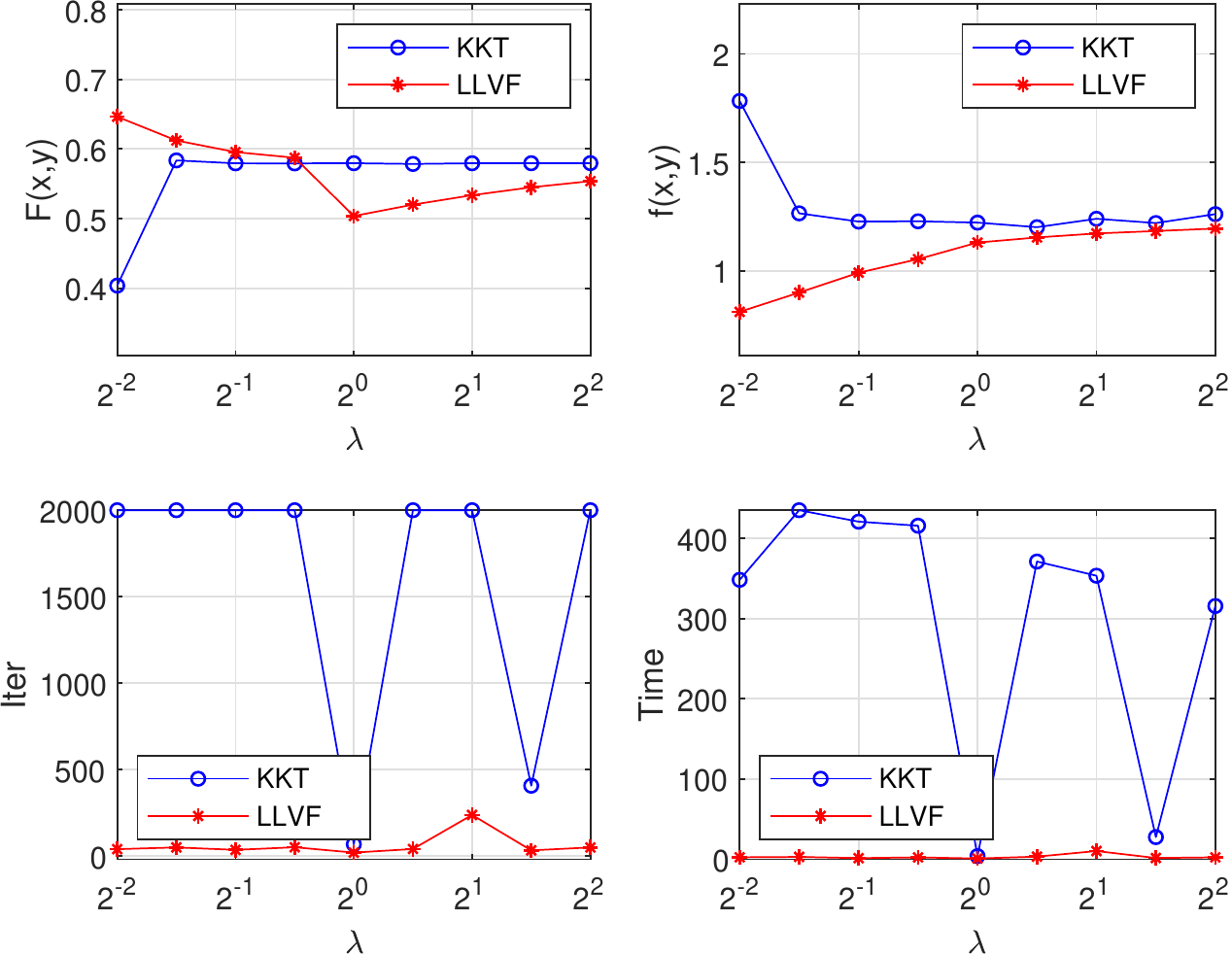}
  \caption{Performance of  SNKKT and SNVF on solving the quadratic BOC program.}
  \label{ioc}
\end{figure}

Finally, in order to  estimate the local behaviour of SNKKT and SNVF  on  our test examples, we  report on the experimental order of convergence (EOC) defined by
$$
\mbox{EOC}:=\max\left\{\frac{\log\|\Phi^\lambda(\zeta^{K-1})\|}{\log\|\Phi^\lambda(\zeta^{K-2})\|},\, \frac{\log\|\Phi^\lambda(\zeta^{K})\|}{\log\|\Phi^\lambda(\zeta^{K-1})\|} \right\}.
$$
As shown in Figure \ref{eoc},  when $\lambda=2^{-1}$, for examples, SNKKT (resp. SNVF)  solves 27 (resp. 14) problems with $\mbox{EOC}<1.1$, 8 (resp. 9) problems with $1\leq\mbox{EOC}<1.5$ and 89 (resp. 101) examples with $\mbox{EOC}\geq1.5$. Generally speaking, SNVF outperforms SNKKT because it solves more examples with $\mbox{EOC}\geq1.5$ and fewer examples with $\mbox{EOC}<1.1$.

\subsection{Comparison of SNKKT  and SNVF on solving  a quadratic BOC program}
Note that the examples in the BOLIB library are of a small scale, with dimensions satisfying $\max\{n, m, p, q\} \leq20$. Therefore, to see the performance of SNKKT and SNVF on solving problems with lager scale, we take advantage of a discretized bilevel optimal control (BOC) program from \cite{MehlitzGerd16}. This is a quadratic program and its dimensions $\{n,m,p,q\}$ can to be altered. The model is described by
$$
\begin{array}{rll}
  F(x,y)& := &\frac{1}{2}[(
  							y^1;0)-c]^\top
  						  D[(
  							y^1;
  							  0)-c] -d^\top x,\\
  G(x,y)& := &\left( - x_1+x_2-1;-x \right),\\	
  f(x,y)& := &\frac{1}{2}(Cy^1-Px)^\top U(Cy^1-Px)+\frac{\sigma}{2}(y^2-Qx)^\top V(y^2-Qx),\\
  g(x,y)& := & \left(
                       y^2-u;
                      -y^2+l;
                     Ay;
                       - Ay
     \right),
\end{array}
$$
where $x\in\mathbb{R}^2$,  $y=( y^1;y^2)$ with $y^i\in\mathbb{R}^{m_i}$, $D\in\mathbb{R}^{m_1\times m_1}$, $d\in\mathbb{R}^{n}$, $c\in\mathbb{R}^{m}$, $C\in\mathbb{R}^{s\times m_1}$, $P\in\mathbb{R}^{s\times n}$, $U\in\mathbb{R}^{s\times s}$, $Q\in\mathbb{R}^{m_2\times n}$, $V\in\mathbb{R}^{m_2\times m_2}$, $u\in\mathbb{R}^{m_2}$, $l\in\mathbb{R}^{m_2}$, $A\in\mathbb{R}^{t\times m}$ are given data. Here, $(a;b)=(a^\top~b^\top)^\top.$
 For simplicity, we fix the dimensions as $n=2, m=274, p=3, m_1=m_2=s=t=137$ and thus $q=548$.

\begin{table}[ht]
{	\renewcommand{\arraystretch}{1.15}\addtolength{\tabcolsep}{-0.25pt}
\begin{tabular}{llrrrrrrrrr}\\\hline
&$\lambda$ &$2^{-2}$&$2^{-1.5}$&$2^{-1}$&$2^{-0.5}$&$2^{0}$&$2^{0.5}$&$2^{1}$&$2^{1.5}$&$2^{2}$\\\hline
\multirow{2}{*}{$F(x,y)$}	&	SNKKT	&	0.40	&	0.58	&	0.58	&	0.58	&	0.58	&	0.58	&	0.58	&	0.58	&	0.58	\\
	&	SNVF	&	0.65	&	0.61	&	0.60	&	0.59	&	0.50	&	0.52	&	0.53	&	0.54	&	0.55	\\\hline
\multirow{2}{*}{$f(x,y)$}	&	SNKKT	&	1.78	&	1.27	&	1.23	&	1.23	&	1.22	&	1.20	&	1.24	&	1.22	&	1.26	\\
	&	SNVF	&	0.81	&	0.90	&	0.99	&	1.05	&	1.13	&	1.15	&	1.17	&	1.18	&	1.20	\\\hline
\multirow{2}{*}{Iter}	&	SNKKT	&	2000	&	2000	&	2000	&	2000	&	67	&	2000	&	2000	&	406	&	2000	\\
	&	SNVF	&	39	&	49	&	35	&	50	&	19	&	40	&	237	&	32	&	49	\\\hline
\multirow{2}{*}{Time}	&	SNKKT	&	348.34	&	435.07	&	420.82	&	415.74	&	4.04	&	371.10	&	353.46	&	27.74	&	315.67	\\
	&	SNVF	&	2.56	&	3.03	&	1.54	&	2.45	&	1.06	&	3.40	&	10.34	&	1.69	&	2.48	\\\hline
\multirow{2}{*}{Time/Iter} 	&	SNKKT	&	0.17	&	0.22	&	0.21	&	0.21	&	0.06	&	0.19	&	0.18	&	0.07	&	0.16	\\
	&	SNVF	&	0.07	&	0.06	&	0.04	&	0.05	&	0.06	&	0.08	&	0.04	&	0.05	&	0.05	\\
\hline
${}$\\
\end{tabular}}
\caption{Performance of SNKKT and SNVF on solving  the quadratic BOC program.}\label{perf-tab:3}
\end{table}

 We now compare the performance of SNKKT and SNVF on solving a BOC program with larger size $n=2$, $m=274$, $p=3$, and $q=548$.  The problem is quadratic, and thus there is no need to calculate the third derivatives of functions in lower level problem.   We tested different values of $\lambda$ on solving this problem and observed that larger values $\lambda$ (e.g., $\lambda>4$) led to a bad performance for both methods. Therefore, we used $\lambda\in\{2^{-2}, \; 2^{-1.5}, \ldots, 2^{1.5}, 2^{2}\}$.  As depicted in Figure \ref{ioc}, it can be clearly seen that SNVF outperforms SNKKT in terms of the number of iterations and the computing time. Most importantly, from the subfigure \emph{Iter}, among 9 choices of $\lambda$, SNKKT used 2000 iterations for  7 choices of $\lambda$, which means it did not get desired solutions before it stopped.  Detailed results were listed in Table \ref{perf-tab:3}. It seems that the best optimal upper level and lower level objective function values are 0.5 and 1.13, respectively. The last two rows reported the time per each iteration, in which SNVF ran much faster than SNKKT for all $\lambda$.

\section{Conclusions and future work}
In this work, we have considered the most common single-level reformulations of the bilevel optimization problem; i.e., problems \eqref{eq:KKTR} and \eqref{eq:LLVFR}. After a detailed theoretical analysis and comparison of the two problems, in terms of (1) the requirements for problems to $\mathcal{C}^1$ or Lipschitz continuous and equivalent to the original problem \eqref{eq:P}, (2) the necessary optimality conditions and qualification conditions necessary to derive them, (3) frameworks for semismooth Newton-type method and convergence, and (4) numerical efficiency. It has resulted from the theoretical framework that non of the reformulations can be said to be superior to the other, although the KKT reformulation provides some higher level of flexibility and tractability that it borrows from the well-established field of MPCCs. However, from the numerical perspective, the LLVF reformulation appears to be superior to the KKT reformulation based the experiments conducted in this paper. Our assessment is that this may be largely due to the fact the size of the equation solved for the KKT reformulation is larger by $q\times q$ ($q$ number of lower-level constraints) and the fact that this reformulation is very sensitive to lower-level convexity and regularity condition as shown in \cite{DempeDuttaBlpMpec2010} (see Subsection \ref{Nature of reformulations and relationships to original problem}), while the LLVF reformulation is completely equivalent (i.e., globally and locally) to problem \eqref{eq:P}. Moreover, due to the presence of the gradient of lower-level Lagrangian function in the KKT reformulation, the evaluation of 3rd order derivatives of functions involved in the lower-level problem is required in the methods considered in this paper. The latter issue does not appear to have played a big influence in the test set used in this paper, but will be potentially be very detrimental to bilevel programs of lager sizes, especially when 3rd order derivatives of lower-level functions are nonzero. In order to check whether the observations made on the numerical performance of the methods studied in this paper for \eqref{eq:KKTR} and \eqref{eq:LLVFR} are valid in general, in a future work, we will be studying and comparing the theoretical performance bounds of the semismooth Newton-type method for both problems in a general setting.

\section*{Acknowledgements}
The authors would like to thank the two anonymous referees for their constructive remarks that
have helped us to improve the presentation of this paper.

\bibliographystyle{amsplain}

\end{document}